\documentclass[reqno,usletter]{amsart}
\usepackage[T1]{fontenc}
\usepackage[latin1]{inputenc}
\usepackage{amssymb}
\usepackage{amsfonts}
\usepackage{amsmath,mathtools}
\usepackage{mathrsfs}
\usepackage{graphicx}
\usepackage{color}
\usepackage{enumitem}
\usepackage{hyperref}
\usepackage{esint}
\usepackage{verbatim}
\usepackage{bbm}
\usepackage{tikz}
\setcounter{MaxMatrixCols}{10}
%TCIDATA{OutputFilter=Latex.dll}
%TCIDATA{Version=5.50.0.2960}
%TCIDATA{<META NAME="SaveForMode" CONTENT="1">}
%TCIDATA{BibliographyScheme=Manual}
%TCIDATA{LastRevised=Tuesday, July 07, 2015 11:43:08}
%TCIDATA{<META NAME="GraphicsSave" CONTENT="32">}
%\usepackage{setspace}
%\doublespacing

\newcommand{\M}[1]{\mathbb{M}^{#1}}
\newcommand{\N}[1]{\mathbb{N}^{#1}}
\newcommand{\R}[1]{\mathbb{R}^{#1}}
\renewcommand{\S}[1]{\mathbb{S}^{#1}}

\newcommand{\Ad}{\mathsf{Ad}}

\newcommand{\cB}{\mathcal B}
\newcommand{\cC}{\mathcal C}

\newcommand{\cH}{\mathcal H}

\newcommand{\cL}{\mathcal L}
\newcommand{\cM}{\mathcal M}

\newcommand{\sI}{\mathscr{I}}

\newcommand{\fC}{\mathfrak C}

\newcommand{\bulk}{\mathrm{bulk}}

\newcommand{\de}{\mathrm d}

\newcommand{\surface}{\mathrm{surface}}

\newcommand{\eps}{\varepsilon}

\newcommand{\dist}[2]{\operatorname{dist}(#1,#2)}

\newcommand{\spt}{\operatorname{spt}}
\newcommand{\tr}{\operatorname{tr}}

\newcommand{\blu}[1]{\textcolor[rgb]{0,0,1}{#1}}

\renewcommand{\geq}{\geqslant}
\renewcommand{\leq}{\leqslant}

\newcommand{\wsto}{\stackrel{*}{\rightharpoonup}}

\newcommand{\average}{{\mathchoice {\kern1ex\vcenter{\hrule
height.4pt width 8pt depth0pt}
\kern-11pt} {\kern1ex\vcenter{\hrule height.4pt width 4.3pt
depth0pt} \kern-7pt} {} {} }}

\newcommand{\res}{\mathop{\hbox{\vrule height 7pt width .5pt depth
0pt\vrule height .5pt width 6pt depth0pt}}\nolimits}

\mathchardef\emptyset="001F

\providecommand{\U}[1]{\protect\rule{.1in}{.1in}}
\numberwithin{equation}{section}
\setlength{\textwidth}{6.5in}
\setlength{\textheight}{9in}
\setlength{\oddsidemargin}{0in}
\setlength{\evensidemargin}{0in}
\setlength{\topmargin}{-0.5in}
\def\e{\eps}

\newtheorem{definition}{Definition}[section]
\newtheorem{theorem}[definition]{Theorem}
\newtheorem{lemma}[definition]{Lemma}
\newtheorem{proposition}[definition]{Proposition}

\newtheorem{corollary}[definition]{Corollary}

\theoremstyle{definition} {\newtheorem{remark}[definition]{Remark}}

\makeindex

\begin{document}

\author{Jos\'{e} Matias}
\address{Departamento de Matem\'atica, Instituto Superior T\'ecnico, Av.~Rovisco Pais, 1, 1049-001 Lisboa, Portugal}
\email[J.~Matias]{jose.c.matias@tecnico.ulisboa.pt}
\author{Marco Morandotti}
\address{Dipartimento di Scienze Matematiche ``G.~L.~Lagrange'', Politecnico di Torino, Corso Duca degli Abruzzi, 24, 10129 Torino, Italy}
\email[M.~Morandotti]{marco.morandotti@polito.it}
\author{David R.\@ Owen}
\address{Department of Mathematical Sciences, Carnegie Mellon University, 5000 Forbes Ave., Pittsburgh, 15213 USA}
\email[D.~R.~Owen]{do04@andrew.cmu.edu}
\author{Elvira Zappale}
\address{Dipartimento di Ingegneria Industriale, Universit\`{a} degli Studi di Salerno, Via Giovanni Paolo II, 132, 84084 Fisciano (SA), Italy}
\email[E.~Zappale]{ezappale@unisa.it}

\subjclass[2010]{49J45 %(1991-now) Methods involving semicontinuity and convergence; relaxation
(74G65, % (2000-now) Energy minimization
74A60, % (2000-now) Micromechanical theories
74C99, %(2000-now) None of the above, but in this section: plastic materials
74N05)} %  View Publications (2000-now) Crystals, under Phase transformations in solids. M: I don't see phase transformations here.

\title[Upscaling of non-local energies]{Upscaling and spatial localization of non-local energies with applications to crystal plasticity} % persist to the macrolevel}% and  spatial localization}
%Relaxation of non-local energies for structured deformations with applications to plasticity}%{Explicit integral representations of the relaxation of non-local energies for structured deformations}
\date{\today}

\begin{abstract}
We describe multiscale geometrical changes via structured deformations $(g,G)$ and the non-local energetic response at a point $x$ via a function $\Psi$ of the weighted averages of the jumps $[u_{n}](y)$ of microlevel deformations $u_{n}$ at points $y$ within a distance $r$ of $x$. 
The deformations $u_{n}$ are chosen so that $\lim_{n\to \infty }u_{n}=g$ and $\lim_{n\to \infty }\nabla u_{n}=$ $G$. 
We provide conditions on $\Psi$ under which the upscaling ``$n\to \infty$'' results in a macroscale energy that depends through $\Psi$ on (1) the jumps $[g]$ of $g$ and the ``disarrangment field'' $\nabla g-G$, (2) the ``horizon'' $r$, and (3) the weighting function $\alpha _{r}$ for microlevel averaging of  $[u_{n}](y)$. 
We also study the upscaling ``$n\to \infty$'' followed by spatial localization ``$r\to 0$'' and show that this succession of processes results in a purely local macroscale energy $I(g,G)$ that depends through $\Psi$ upon the jumps $[g]$ of $g$ and the ``disarrangment field'' $\nabla g-G$, alone. 
In special settings, such macroscale energies $I(g,G)$ have been shown to support the phenomena of yielding and hysteresis, and our results provide a broader setting for studying such yielding and hysteresis.
As an illustration, we apply our results in the context of the plasticity of single crystals.

\end{abstract}

\maketitle
%\smallskip

\noindent \textbf{Keywords:} structured deformations, upscaling, relaxation, spatial localization, non-local energies, crystal plasticity.

\tableofcontents

\section{Introduction}\label{section:intro}

The phenomena of yielding, hysteresis, and dissipation are central aspects of the plastic behavior of solids. 
The research described in this article addresses the energetics underlying such phenomena and builds on the approach taken in two earlier publications \cite{CDPFO1999,DO2002} in this journal.
In \cite{CDPFO1999} an enriched energetics \cite{CF1997}, attained by means of the multiscale geometry of structured deformations \cite{DPO1993}, led to the identification of instabilities due to submacroscopic slips and to the prediction of yielding, hysteresis, and dissipation within a class of structured deformations called two-level shears. 
In \cite{DO2002}, again within the class of two-level shears, the additional phenomenon of work-hardening was shown to emerge from a refinement of the approach in \cite{CDPFO1999}, and the quantitative details of the hardening property of the resulting constitutive response were identified by means of the classical experiments of G.~I.~Taylor on aluminum single crystals in soft devices.

Key elements in the prediction of yielding, hysteresis, and dissipation that emerged from the approach taken in \cite{CDPFO1999} and \cite{DO2002} are
\begin{itemize}
\item[(i)] an additive decomposition of the amount $\mu$ of macroscopic shear into the \emph{shear without slip} $\gamma $, the amount of shear at the macrolevel due to submacroscopic shears away from slip bands diffused throughout the crystal, and into the \emph{shear due to (micro)slip} $\mu-\gamma$, the amount shear at the macrolevel due to the relative displacements of portions of the crystal on opposite sides of slip bands;
\item[(ii)] a corresponding additive decomposition of the volume density of free energy $H(\mu ,\gamma )$ into a part $\varphi (\gamma )$, a strictly convex measure of the volume density of energy stored by the crystal lattice away from slip bands, and a part $\psi (\mu -\gamma )$, a periodic measure of the volume density of energy stored due to the relative displacement of the portions of the crystal on opposite sides of slip bands:
\begin{equation}\label{decomposition for energy}
H(\mu ,\gamma )=\varphi (\gamma )+\psi (\mu -\gamma ).
\end{equation}
The periodicity of the part $\psi $ due to crystallographic slips is the source of an infinite family of metastable branches of the set of pairs $(\mu ,\gamma )$ that render stationary $H(\mu ,\gamma )$, in the case of a hard device, or $H(\mu ,\gamma )-\sigma \mu $ in the case of a soft device (with $\sigma $ a given applied shear stress).
The analysis in \cite{CDPFO1999} shows that on each stable branch, reversible loading and unloading is available, while irreversible jumps between the ends of metastable branches are the source of yielding and hysteresis.
\end{itemize}

The decomposition (i) in the context of two-level shears is an instance of the more general additive decomposition \cite{DPO1993} associated with a structured deformation $(g,G)$ in a fully three-dimensional context:
\begin{equation}\label{G + M}
\nabla g=G+M,  
\end{equation}
a decompostion of the macroscopic deformation gradient $\nabla g$ into the part $G$, the deformation without submacroscopic disarrangements, and the part $M$, the deformation due to disarrangements. 
This terminology is justified by demonstrating the existence of piecewise smooth approximations $u_{n}$ of the macroscopic deformation $g$ with the properties
\begin{equation}\label{identification relations}
g=\lim_{n\to \infty }u_{n},\qquad G=\lim_{n\to\infty}\nabla u_n
\end{equation}%
and with the property that $M=\nabla g-G$ is a limit of averages of the jumps $[u_{n}]$ of the approximations. 
In the context of \cite{DPO1993}, the limits in \eqref{identification relations} are taken in the sense of essentially uniform convergence, and the accommodation inequality
\begin{equation}\label{accommodation inequality}
0<\det G\leq \det \nabla g, 
\end{equation}%
a defining requirement in \cite{DPO1993} for a structured deformation, assures that \emph{injective} approximations $u_{n}$ of the injective mapping $g$ are available in \eqref{identification relations}. 
The scalar fields $\gamma $ and $\mu -\gamma $ for two-level shears in (i) are components of the tensor fields $G$ and $M$, respectively, in \cite{DPO1993}.

The decomposition (ii) in the context of two-level shears was motivated by the seminal study of the energetics of structured deformations provided in \cite{CF1997}. 
The structured deformations $(g,G)$, studied there in a multidimensional setting appropriate for variational analysis and described here in Section~\ref{sec:prel_SD}, were shown to lead to decompositions and identification relations analogous to \eqref{identification relations}, but the unavailability of \eqref{accommodation inequality} in that setting means that there is lacking a guarantee that injective approximations $u_{n}$ as in \eqref{identification relations} exist. 
Nevertheless, the broader setting permits one to assign an energy $E_{L}(u_{n})$ to the approximating functions $u_{n}$ in \eqref{identification relations} that is the sum of a bulk part depending upon $\nabla u_{n}$ and an interfacial part depending upon $[u_{n}]$. 
In addition, in this setting one also can identify a ``relaxed'', or ``upscaled'', energy $I(g,G)$ by means of optimally chosen approximations $u_{n}$. 
More significantly, the analysis in \cite{CF1997} shows that the upscaled energy is the sum of a bulk part that depends upon $\nabla g$ and $G$ (or, equivalently, upon $M$ and $G$) and of an interfacial part that depends upon $[g]$. Consequently, the identification relations \eqref{identification relations} tell us that the bulk energy density $H(\nabla g,G)$ that determines the bulk part of the upscaled energy $I(g,G)$ reflects both the smooth part $\nabla u_{n}$ and the disarrangements $[u_{n}] $ of the approximations.

A volume density of free energy for two-level shears that depends only upon $\mu$ and $\gamma$ would be a one-dimensional counterpart of the bulk density $H(\nabla g,G)$ in \cite{CF1997}. 
However, in the setting of \cite{CF1997}, an additive decomposition of $H(\nabla g,G)$ of the type \eqref{decomposition for energy} into a part depending only upon the deformation without disarrangements $G$ and a part depending only upon the deformation due to disarrangements $M$ generally is not available (see the discussion in Subsection~\ref{sect7}). 
Moreover, in the setting of \cite{CF1997}, in special cases in which such a decomposition is available, the further availability of a dependence on $M$ for the upscaled energy that reflects the periodicities of a crystal lattice is not assured. 
Consequently, the upscaling of energy in \cite{CF1997} provides ingredients necessary, but not sufficient, for the justification of the decomposition \eqref{decomposition for energy} with the stated properties of the terms $\varphi $ and $\psi $ used in \cite{CDPFO1999} as well
as in \cite{DO2002}.

The determination of an upscaling of energy that justifies the particular decomposition \eqref{decomposition for energy} was studied in \cite{DPO2000} in a one-dimensional setting by adding to the energy $E_{L}(u_{n})$ a term with a non-local dependence on the jumps $[u_{n}]$ of the approximations $u_{n}$ in \eqref{identification relations}. 
The additional non-local term 
\begin{equation}\label{non-local term}
\int_{0}^{1}\Psi \bigg(\sum_{z\in S_{u_{n}}\cap (x-r,x+r)}\frac{[u_{n}](z)}{2r}\bigg)\,\de x
\end{equation}
introduced in \cite{DPO2000} involves the values of an energy density $\Psi $ at the average  $\sum_{z\in S_{u_{n}}\cap (x-r,x+r)}\frac{[u_{n}](z)}{2r}$ of the jumps $[u_{n}](z)$ of the approximation $u_{n}$ at all points $z\in S_{u_{n}}$, the set of jump points of $u_{n}$, that lie within the interval $(x-r,x+r)$; the energy density then is integrated over the interval $(0,1)$ representing the body under consideration. 
The main result obtained there for a particular class of energy densities $\Psi $ is the formula
\begin{equation}\label{1d upscaling localization}
\lim_{r\rightarrow 0}\lim_{n\to \infty }\int_{0}^{1}\Psi\bigg(\sum_{z\in S_{u_{n}}\cap (x-r,x+r)}\frac{[u_{n}](z)}{2r}\bigg)\,\de x=\int_{0}^{1}
\Psi (M(x))\,\de x  
\end{equation}
that holds for sequences of approximations $u_{n}$ satisfying \eqref{identification relations} and whose jumps are all of the same sign. 
The iterated limit on the left-hand side of this formula is interpreted as the operation of upscaling, ``$\lim_{n\to \infty}$'', followed by the operation of spatial localization, ``$\lim_{r\to0}$'', and is seen to result in a bulk energy $\int_{0}^{1}\Psi (M(x))\,\de x$ in which the original energy density $\Psi $ is evaluated at the disarrangement tensor $M(x)=\nabla g(x)-G(x)$ for the given structured deformation $(g,G)$. 
In particular, properties of the energy density $\Psi $ such as periodicity persist under the operations of upscaling and localization. 
Moreover, sufficient conditions were provided in \cite{DPO2000} in order that the total energy obtained by addition of the non-local term \eqref{non-local term} to the energy $E_{L}(u_{n})$ satisfies
\begin{equation}\label{upscaling of sum}
\lim_{r\to 0}\lim_{n\to \infty}\bigg\{E_{L}(u_{n})+\int_{0}^{1}\Psi \bigg(\sum_{z\in S_{u_{n}}\cap (x-r,x+r)}\frac{[u_{n}](z)}{2r}\bigg)\,\de x\bigg\} =\lim_{n\to \infty }E_{L}(u_{n})+\int_{0}^{1}\Psi (M(x))\,\de x  
\end{equation}
and eventually assures that a decomposition of the form \eqref{decomposition for energy} is available in the case of two-level shears. 
Thus, the use of the operations of upscaling followed by localization in a one-dimensional setting justifies the decomposition \eqref{decomposition for energy} with the stated properties of the terms $\varphi $ and $\psi $ used in \cite{CDPFO1999} as well as in \cite{DO2002} and thereby provides a firmer basis for the analyses in \cite{CDPFO1999} and \cite{DO2002} that predict yielding, hysteresis, dissipation, and hardening in the context of two-level shears.

Our main goal in this paper is to justify a fully three-dimensional analogue of the decomposition \eqref{decomposition for energy} that will provide in future research a basis for an analysis of yielding, hysteresis, dissipation, and hardening in a context far broader than that of two-level shears. 
Our efforts here to this end consist of 
\begin{enumerate}[label=(\alph*)]
\item\label{(a)} the broadening of the analysis of upscaling and localization in \cite{DPO2000} to a class of structured deformations that includes the genuinely three-dimensional changes in geometry encountered in the deformations of single crystals, and 
\item\label{(b)} the provision of a context compatible with the analysis in \ref{(a)} in which the energy density $\Psi $ has properties that reflect underlying periodicities of a crystalline lattice and, in addition, has the property of frame-indifference, central in the formulation of constitutive assumptions in continuum mechanics. 
\end{enumerate}
The results in this paper thus provide the desired energetic and geometrical starting point for a fully three-dimensional study of yielding, hysteresis, and dissipation in single crystals in the context of structured deformations.

Item~\ref{(a)} is addressed in Sections~\ref{section:mathform} through~\ref{together} below. 
Following a summary in Section~\ref{section:preliminaries} of the required mathematical background on measure theory and the $SBV$ theory of structured deformations, we state in Section~\ref{section:mathform} our main results on the upscaling and spatial localization of a non-local, multidimensional analog of the one-dimensional non-local term energy \eqref{non-local term}, as well as, in Section~\ref{together}, our multidimensional analog of the one-dimensional formula \eqref{upscaling of sum} that pertains to a sum of local and non-local energies. 
The non-local analog of \eqref{non-local term} that we consider is a weighted average of jumps in which the weighting function is smooth and vanishes outside of a ball of radius $r$ centered at a given point $x$ in the body. 
This type of average is amenable to more standard mathematical tools, and we leave for future study whether or not the case of a discontinuous weighting function, one such as in \eqref{non-local term}, which is a non-zero constant inside the ball and vanishes outside the ball, is amenable to mathematical analysis in the present context.
The proofs and verifications of our theorems and formulas are provided in Section~\ref{section:new3} for the analysis of the non-local term and in Section~\ref{together} for the analysis of the sum of local and non-local terms. 
Our multidimensional analogs of both local and non-local energies allow for an explicit dependence of the energy densities on position. 

The explicit dependence on position just mentioned is necessitated by the additional requirement of frame-indifference of energetic response that is imposed in Section~\ref{plasticity} when we apply our results to crystal plasticity and by the transformation properties of the deformation due to disarrangements $M$ under changes of observer. 
There we provide the context mentioned in item~\ref{(b)} by restricting our attention to a more limited class of structured deformations suitable for describing deformations of single crystals, namely, to invertible structured deformations \cite{DPO1993,DO2002a}. 
For invertible structured deformations, the fields $g$ and $G$ have additional smoothness, and the accommodation inequality \eqref{accommodation inequality} is satisfied with equality, so that the macroscopic volume change $\det \nabla g$ equals the the volume change $\det G$ due to smooth submacroscopic changes. 
Consequently, the submacroscopic disarrangements associated with invertible structured deformations cannot entail any volume change, as would occur during the formation of microvoids, and so are compatible with the submacoscopic slips observed in single crystals. 
In Section~\ref{plasticity} we describe further a class of ``crystallographic structured deformations'', invertible structured deformations whose disarrangement tensor $M$ reflects the availability of submacroscopic slips only on specific crystallographic planes in specific crystallographic directions. 
The bulk of the analysis in Section~\ref{plasticity} then is devoted to showing that it is appropriate in the context of crystallographic structured deformations to require that the energy density $\Psi $ has properties that reflect underlying periodicities of a crystalline lattice, as well as the property of frame-indifference. 
As noted in the last paragraph of Section~\ref{plasticity}, when an energy density with these properties is used to generate the non-local term studied in Sections~\ref{section:mathform} and~\ref{section:new3}, the results from Section~\ref{together} for the analysis of the sum of local and non-local terms provide the desired multidimensional version of \eqref{decomposition for energy} appropriate for crystallographic deformations of single crystals, the starting point for a future study of yielding, hysteresis, dissipation and work-hardening.
Readers mainly interested in applications to plasticity may wish to focus on Sections~\ref{section:mathform} and~\ref{plasticity}, referring to the mathematical preliminaries in Section~\ref{section:preliminaries} as needed.

Our approach to the study of non-local energies rests on two limiting
processes:
\begin{description}
\item[upscaling] Starting from a submacroscopic level at which a weighted average of disarrangements within each neighborhood of a fixed size $r>0$ determines the non-local energy, one passes to the macrolevel, permitting disarrangements to diffuse throughout each such neighborhood.
This upscaling process determines for each given structured deformation the dependence of the upscaled energy density on that structured deformation.
\item[spatial localization] Starting at the macrolevel from neighborhoods of the given size $r$ above, one passes to neighborhoods of smaller and smaller sizes to obtain in the limit $r\to 0$ purely local bulk and interfacial energy densities for the structured deformation in question. 
\end{description}

The results in Sections~\ref{section:mathform} through~\ref{together} show that the non-linearities in the non-local energy embodied in the choice of energy density $\Psi $, before upscaling, persist under these two limiting processes. 
In cases that can be identified through our analysis in those sections, the mathematical effect of the upscaling and spatial localization is to replace the initial weighted average of jumps $[u_{n}]$ of approximating deformations at which $\Psi $ is to be evaluated by the disarrangement tensor $M=\nabla g-G$ for the given structured deformation that $u_{n}$ approximates.

To our knowledge, previous research that involves upscaling or spatial localization of energies has been restricted to the consideration of only
one of these two limiting processes, rather than both.
 In \cite{nogap,CF1997,DP2001,DPO2000,OP2000,S17} the upscaling is carried out for purely local energy densities, so that the parameter $r$ does not appear and the spatial localization is irrelevant. 
 The important results in \cite{S17} that are exemplified in \cite{nogap,CF1997,DP2001,DPO2000,OP2000} show that the bulk energies obtained via upscaling, when the energy before upscaling is both local and purely interfacial, form a class that excludes the periodic functions used in \cite{CDPFO1999,DO2002} to predict yielding and hysteresis. 
 Peridynamics provides a context in which only spatial localization is employed: for example, instances of classical, local theories of elasticity and fracture are recoverd in \cite{SL2008} and \cite{L2016} from peridynamics under the limiting process $r\to 0$. 
In the case of peridynamics, the principal focus with respect to storage of energy and with respect to associated field theories is the non-local case in which the ``horizon'' $r$ remains fixed.

To address here the extensive published research on crystal plasticity that employs neither the explicit use of upscaling nor that of spatial localization, we mention first the article \cite{LandauTheory}.
The introduction in that recent paper provides an extensive bibliography of such research, and the substantial analysis in two dimensions provided there is based in part on the availability of an infinite family of energy wells (as was the case in \cite{CDPFO1999} and \cite{DO2002}) and on the requirement of frame-indifference, ingredients that will enter into a future analysis of yielding, hysteresis, and dissipation, based on the results on energetics of structured deformations in the present paper.
Another recent article \cite{ESJB2013} provides a critical review of an essential element of ``physically based'' plasticity theories and gives an alternative approach to conventional treatments.

\section{Mathematical preliminaries}\label{section:preliminaries}
We start this section by fixing the notation used throughout this work; then we recall the basic notions and definitions of $SBV$ functions, which are essential to introduce the variational setting of \cite{CF1997} for structured deformations, and we conclude by recalling some results on measure theory, which will be useful to state our main results.
In this section, we will use the symbol $U$ to denote an open subset of $\R{N}$ with boundary $\partial U$ of zero Lebesgue measure; we save the symbol $\Omega$ for a bounded, connected, open subset of $\R{N}$ with Lipschitz boundary which represents the body undergoing the mechanical deformations we are interested in describing and characterizing. Since both general results and ones specific to our mechanical setting are presented, we will distinguish between functions or measures defined on the set $U$ taking values in $\R{\ell}$, for a generic $\ell\in\N{}$ (for the general case), and functions or measures defined on the set $\Omega$ taking values in $\R{d\times N}$, for $N,d\in\N{}$ (for those carrying a mechanical meaning). The properties that we require of the set $\Omega$ are stronger than those required on $U$, so that all of the properties valid for functions $U\to\R{\ell}$ will be valid, in particular, for functions $\Omega\to\R{d\times N}$.
%that some results on measure theory and give a contained presentation of special functions of bounded variation, and finally we conclude by introducing structured deformations in the framework of \cite{CF1997}.
\subsection{Notation}\label{notation}
We will use the following notations
\begin{itemize}
\item[-] $\cL^{N}$ and $\cH^{N-1}$ denote the  $N$-dimensional Lebesgue measure and the $\left(  N-1\right)$-dimensional Hausdorff measure in $\R N$, respectively; the symbol $\de x$ will also be used to denote integration with respect to $\cL^{N}$, while $\de \cH^{N-1}$ will be used to denote surface integration with respect to $\cH^{N-1}$;
\item[-] $U \subset \R{N}$ is an open set with $\cL^N(\partial U) = 0$;
\item[-] $\Omega \subset \R{N}$ is a bounded connected open Lipschitz set; %\red{with $\cL^N(\partial \Omega) = 0$};
\item[-] ${\mathcal A}(\Omega)$ is the family of all open subsets of $\Omega $; $\cB(\Omega)$ is the family of all Borel subsets of $\Omega$;
\item [-] $C^\infty_c(U;\R\ell)\coloneqq\{u\colon U\to\R{\ell}: \text{$u$ is smooth and has compact support in $U$}\}$; if $\ell=1$, we just denote this set by $C^\infty_c(U)$;
%\item[-] 
$C^\infty_0(U)$ and $C^\infty_0(U;\R\ell)$ denote the closures of $C^\infty_c(U)$ and $C^\infty_c(U;\R\ell)$, respectively, in the $\sup$ norm; 
the same definitions are given if we replace $U$ by $\overline U$;
\item [-] $C_c(U;\R\ell)\coloneqq\{u\colon U\to\R{\ell}: \text{$u$ is continuous and has compact support in $U$}\}$; if $\ell=1$, we just denote this set by $C_c(U)$; 
$C_0(U)$ and $C_0(U;\R\ell)$ denote the closures of $C_c(U)$ and $C_c(U;\R\ell)$, respectively, in the $\sup$ norm; 
the same definitions are given if we replace $U$ by $\overline U$; observe that $C_0(\overline\Omega;\R\ell)=C(\overline\Omega;\R\ell)$;
\item[-] $C(U;\R\ell)\coloneqq\{u\colon U\to\R{\ell}: \text{$u$ is continuous on $U$}\}$; 
the same definitions are given if we replace $U$ by $\overline U$;
\item[-] $\cM (U)$ and $\cM(U;\mathbb R^{\ell})$ are the sets of (signed) finite real-valued or vector-valued Radon measures on $U$, respectively; $\cM ^+(U)$ is the set of non-negative finite Radon measures on $U$;
\item[-] $\cM(\overline U)$ and $\cM(\overline U;\mathbb R^{\ell})$ denote the dual spaces of the sets $C_{\blu{0}}(\overline U)$ and $C_{\blu{0}}(\overline U;\R{\ell})$ of continuous functions, respectively; %, i.e. $C(\overline U)^*$, and the weak$^*$ topology is intended in the sense of duality with $C(\overline U)$};
\item[-] given $\mu\in\cM(U)$ (or $\cM(\overline U))$ or $\mu\in\cM(U;\R{\ell})$ (or $\cM(\overline U; \R{\ell})$), the measure $|\mu|\in\cM^+(U)$ (or $\cM^+(\overline U)$) denotes the total variation of $\mu$;
\item[-] given $\mu\in\cM(U;\R{d})$ (or $\cM(\overline U;\R{d})$), we denote by $\mu=m^a\cL^N+\mu^s$ its decomposition into absolutely continuous part with respect to the Lebesgue measure and singular part; for every $A\in\cB(U)$,(or $\cB(\overline U))$ we define $\langle\mu\rangle(A)\coloneqq \int_A \sqrt{1+|m^a(x)|^2}\,\de x+|\mu^s|(A)$;
\item[-] $\S{N-1}$ denotes the unit sphere in $\R N$;
\item[-]  for any $r>0$, $B_r$ denotes the open ball of $\R{N}$ centred at the origin of radius $r$; for any $x\in\R{N}$, $B_r(x) \coloneqq x+ rB$ denotes the open ball centred at $x$ of radius $r$; $Q\coloneqq (-\tfrac12,\tfrac12)^N$ denotes the open unit cube of $\R{N}$ centred at the origin; for any $\eta\in\S{N-1}$, $Q_\eta$ denotes the open unit cube in $\R{N}$ with two faces orthogonal to $\eta$; for any $x\in\R{N}$ and $\delta>0$, $Q(x,\delta)\coloneqq x+\delta Q$ denoted the open cube in $\R{N}$ centered at $x$ with side $\delta$;
%\item[- ]$Q(x_0,\delta)$ denotes the cube centered at $0$ with side lenght $2 \delta$ and with sides parallel to the canonical basis in $\mathbb R^N$;
%\item[-] \red{the measure $\mathcal L^N(B_r(x))$ will be denoted by $V_N(r)$;}
\item[-] for any $r>0$ and $U\subset\R{N}$, $U_r\coloneqq\{x\in U:\dist{x}{\partial U}>r\}$ and $U^r\coloneqq%\{x\in\R{N}:\dist{x}{\Omega}<r\}=
U+B_r$;
% centered at $x \in \Omega$ with side length $\delta$ and with one side orthogonal to $e_3$, 
%\item[-]  $Q_{\nu}(x, \delta)$ is the cube centered at $x \in \Omega$ with side length $\delta$ and with one side orthogonal to $\nu \in S^2$, 
%\item[-] cubes in $\R2$ will be denoted by $Q'$ and the other obvious adaptations of the definitions above;
%\item[-] for $\eta \in \S{1}$, we define $\tilde{\eta} \in \S2$ by $\tilde{\eta} = (\eta, 0)$,
%\item[-]  for $\nu \in S^2$, we write $\nu = (\nu_{\alpha}, \nu_3)$ where $\nu_\alpha$ denotes the vector in $\R2$ of the first two components of $\nu$.
\item[-]  $C$ represents a generic positive constant that may change from line to line.
%\item[-] $\lim_{\delta, n} := \lim_{\delta \to 0^+} \lim_{n \to \infty}, \; \lim_{k,n} := \lim_{k \to \infty} \lim_{n \to \infty};$
%\item[-] $\debaixodolim {} {n,m\to \infty}:= \debaixodolim {}
%{n\to \infty} \hspace{-0.1cm} \debaixodolim {} {m\to \infty}$
%while $\debaixodolim {} {m,n\to \infty}:= \debaixodolim {} {m\to
%\infty} \hspace{-0.1cm} \debaixodolim {} {n\to \infty}.$
%\item[-] \blu{$C_c^{+,\infty}(B_1)\coloneqq\{\alpha\colon B_1\to[0,+\infty): \alpha\; \text{ is smooth with compact support in} \; B_1\}$}; \\
%\item [-] \blu{$C_c^\infty (B_1; \R\ell )\coloneqq\{\alpha\colon B_1\to \R\ell: \alpha\; \text{ is smooth with compact support in}\; B_1\}$ }; \red{Marco: do we use this space?}\red{Z\'{e}: just in the introduction for $\ell= d\times N$, it is OK by me if we delete here.}
%\item[-] for any $p\in[1,+\infty)$ and $\ell\in\N{}$, $L^p(U;\R{\ell})$ denotes the space of vector-valued functions whose $p$-th power is integrable; 
%\item [-] $BV(U;\R{\ell})$ denotes the space of vector-valued functions of bounded variation; 
%\item [-] $SBV(U;\R{\ell})$ denotes the space of vector-valued special functions of bounded variation;
%\item[-] the set of structured deformations is defined as $SD(\Omega; \R{d}\times\R{d\times N})\coloneqq SBV(\Omega; \R{d})\times L^1(\Omega; \R{d\times N})$.
\end{itemize}

\subsection{$SBV$ functions}\label{section:SBV}
We recall some facts on functions of bounded variation and refer the reader to \cite{AFP} for a detailed treatment of this subject. 

%\red{put here the definitions of convergences needed for the app theorem Def 2.1(i). }
Let $V\subseteq\R{N}$ be an open set.
A function $u \in L^1(V; \R d)$ is said to be of \emph{bounded variation}, and we write $u \in BV(V; \R d)$, if its distributional derivative $D u \in \cM (V;\R{d\times N})$, that is, it is a (signed) finite Radon measure. 
The space $BV(V; \R d)$ is a Banach space when endowed with the norm
$\lVert u\rVert_{BV(V;\R{d})} \coloneqq \lVert u\rVert_{L^1(V;\R{d})} + | Du|(V)$.
Since the presence of the total variation $|Du|$ makes this norm too strong for practical applications, it is customary to consider the weak-* convergence in $BV$, which is the appropriate notion for having good compactness properties.
%\red{(see \cite{AFP}) M: we say at the beginning to go to AFP for details}. 
We first need to introduce the notion of weak-*convergence of measures: we say that a sequence of measures $\mu_n\in\cM(V;\R{\ell})$ converges weakly-* to $\mu$ in $\cM(V;\R{\ell})$ %\red{(in symbols $\mu_n\wsto\mu$)} 
if
\begin{equation*}%\label{1101}
\lim_{n\to\infty}\int_V\varphi(x)\,\de\mu_n(x) = \int_V \varphi(x)\,\de\mu(x) \qquad\text{for every $\varphi\in C_0(V;\R{\ell})$}.
\end{equation*}
With this definition, we say that a sequence $u_n\in BV(V; \R d)$ converges weakly-* to a function $u\in BV(V; \R d)$, in symbols $u_n\wsto u$, if
$$u_n\to u \quad\text{in $L^1(V;\R d)$}\qquad\text{and}\qquad Du_n\wsto Du\quad\text{in $\cM(V;\R{d\times N})$.}$$ 

The Radon-Nikod\'ym Theorem \cite[Theorem~1.28]{AFP} ensures that, for any Radon measure $\mu\in\cM(V;\R{\ell})$, there exists a unique pair of Radon measures $\mu^a$ and $\mu^s$ such that $\mu^a$ is absolutely continuous with respect to the Lebesgue measure $\cL^N$, $\mu^s$ is singular with respect to $\cL^N$, and $\mu=\mu^a+\mu^s$. Moreover, there exists a unique function $m^a\in L^1(V;\R{\ell})$ such that $\mu^a=m^a\cL^N$, so that $\mu=m^a\cL^N+\mu^s$.
The singular part $\mu^s$ of $\mu$ is supported on a set of Lebesgue measure zero.
Since the distributional derivative $Du$ is a measure, it can be split into the sum of two mutually singular measures, which we denote by $D^{a}u$ and $D^{s}u$, the former being absolutely continuous with respect to the Lebesgue measure $\cL^N$, the latter being orthogonal to it. 
%(the absolutely continuous part  and singular part, respectively, of $Du$ with respect to the Lebesgue measure $\mathcal{ L}^N$). 
By $\nabla u$ we denote the density of $D^{a}u$ with respect to $\mathcal L^N$, so that we can write
$$Du= \nabla u \,\mathcal L^N + D^{s}u.$$
The measure $D^s u$ can be further split into the sum of two contributions, $D^j u$ measuring the discontinuities of $u$ and $D^c u$ measuring the Cantor-like behavior of the distributional derivative. In particular, denoting by $S_u$ the set of points $x\in\Omega$ for which there exist two vectors $a, b\in \R{d}$ and  a unit vector $\nu  \in \S{N-1}$, normal to $S_u$ at $x$,
such that $a\neq b$ and
$$%\begin{equation} \label{jump1} 
\lim_{\eps \to 0^+} \frac {1}{\e^N} \int_{\{ y \in x+\eps Q_{\nu} : (y-x)\cdot\nu  > 0 \}} | u(y) - a| \, \de y = 0,
\qquad
\lim_{\eps \to 0^+} \frac {1}{\e^N} \int_{\{ y \in x+\eps Q_{\nu} : (y-x)\cdot\nu  < 0 \}} | u(y) - b| \, \de y = 0, 
$$%\end{equation}
the triple $(a,b,\nu)$ is uniquely determined by the two limits above %\eqref{jump1} 
up to permutation of $a$ and $b$ and a change of sign of $\nu$ and is denoted by $(u^+ (x),u^- (x),\nu_u (x))$. The set $S_u$ is called the \emph{jump set} of $u$ and it is $(N-1)$-rectifiable.
In conclusion, the distributional derivative $Du$ can be written as the sum of three mutually singular measures as
$$Du= \nabla u \,\mathcal L^N + [u] \otimes \nu_u \,{\mathcal H}^{N-1}\res S_u + D^c u,$$
where $[u](x):= u^+(x) - u^-(x)$ is the jump of $u$ at $x\in S_u$ and $\nu_u(x)$ is the outer unit normal to $S_u$ at $x$. 

The space of \emph{special functions of bounded variation}, $SBV(V; \R d)$ is the space of functions $u \in BV(V; \R d)$ such that $D^cu = 0$; therefore, for each $u\in SBV(V;\R{d})$
\begin{equation}\label{810}
Du = \nabla u \cL^N + [u] \otimes \nu_u \cH^{N-1} \res S_u.
\end{equation}

\subsection{Structured deformations}\label{sec:prel_SD}
In continuum mechanics, structured deformations \cite{DPO1993} provide a substantial description of the multiscale geometry of deformations.
In light of the modern developments of analytical tools for the energetic formulation of mechanical phenomena, structured deformations have been cast in a variational framework in the pioneering work of Choksi and Fonseca \cite{CF1997}.
In their setting, a (first-order) structured deformation of a body, represented here and henceforth by a bounded, connected, open set $\Omega\subset\R{n}$ with Lipschitz boundary, is a pair $(g,G)$, where $g$ represents the macroscopic deformation of $\Omega$ and $G$ represents the contribution at the macrolevel of smooth submacroscopic geometrical changes; in order to allow the macroscopic deformation $g$ to include non-smooth behavior, such as slips and separations, Choksi and Fonseca required that $g\in SBV(\Omega;\R{d})$. 
%\red{, the space of special functions of bounded variation.}
The matrix-valued field $G\in L^1(\Omega;\R{d\times N})$ captures the contribution of the smooth submacroscopic geometrical changes to the deformation gradient $\nabla g$, so that a relevant object in the theory of structured deformation is the disarrangement tensor $M\coloneqq \nabla g-G$.
Following \cite{CF1997}, we define the set of structured deformations on $\Omega$ as 
$$SD(\Omega; \R{d}\times\R{d\times N})\coloneqq SBV(\Omega; \R{d})\times L^1(\Omega; \R{d\times N}).$$
We endow the space $SD(\Omega; \R{d}\times\R{d\times N})$ with the natural norm induced by the product structure and we introduce the shorthand notation
$ \|(g,G)\|_{SD(\Omega;\R{d}\times\R{d\times N})} \coloneqq \|g\|_{BV(\Omega;\R{d})} +\|G\|_{L^1(\Omega;\R{d\times N})}$, which we are going to denote simply by $\|(g,G)\|_{SD}$ when no domain specification is needed.
The connection between structured deformations and the actual submacroscopic geometrical changes occurring during a deformation is captured in the Approximation Theorem \cite[Theorem~2.12]{CF1997}, \cite[Theorem~1.2]{S2015} (which are counterparts of  \cite[Theorem~5.8]{DPO1993}), stating that for each $(g,G)\in SD(\Omega;\R{d}\times\R{d\times N})$ there exists a sequence $u_n\in SBV(\Omega;\R{d})$ such that, as $n\to\infty$,
\begin{equation}\label{appCF}
u_n \to g\quad\text{in $L^1(\Omega;\R{d})$}\qquad\text{and}\qquad \nabla u_n\wsto G\quad\text{in $\cM(\Omega;\R{d\times N})$}.
\end{equation}
In the formula above, the geometrical process of upscaling from the submacroscopic to the macroscopic level is made precise via the notions of convergence used there.
The approximating functions $u_n$ in \eqref{appCF} are interpreted as a description of both smooth and non-smooth submacroscopic geometrical changes, and we may write
\begin{equation*}%\label{1001}
M=\nabla\big(\lim_{n\to\infty} u_n\big)-\lim_{n\to\infty}\nabla u_n.
\end{equation*}
Thus, the disarrangement tensor emerges as a measure of the non-commutativity of the limit operation and taking the absolutely continuous part of the distributional derivative; because of this, it captures the contribution in the limit of the jump discontinuities of the $u_n$'s.
Notice that the approximating sequence $u_n$ in \eqref{appCF} need not be unique.
%Let us consider $\Omega \subset \R{N}$ a bounded connected open Lipschitz set. } %\red{with} 
%(notice that in particular $\cL^N(\partial \Omega) = 0$). %{\bf E: this latter property is always true for lipschitz sets. Isn' it?}.

%\red{In the Introduction we stated our version of the Approximation Theorem, which we now prove.}
It is convenient to restate the Approximation Theorem along the lines of \cite[Theorem~1.2]{S2015} to deduce suitable properties of the approximating sequences which we are going to need to prove our main results.
\begin{proposition}[Approximation Theorem]\label{appTHM}
There exists $C>0$ such that for every $(g,G)\in SD(\Omega;\R{d}\times\R{d\times N})$ there exists a sequence $u_n\in SBV(\Omega;\R{d})$ converging to $(g,G)$ according to \eqref{appCF} and such that, for all $n\in\N{}$,
\begin{equation}\label{appEST}
|D u_n|(\Omega)\leq C \|(g,G)\|_{SD(\Omega;\R{d}\times\R{d\times N})}. 
%\big(\|g\|_{BV(\Omega;\R{d})} +\|G\|_{L^1(\Omega;\R{d\times N})}\big).
\end{equation}
In particular, %there exists $C>0$ such that 
this implies that, up to a subsequence, 
\begin{equation}\label{centerline}
D^s u_n\wsto (\nabla g-G)\cL^N+D^s g\qquad \text{in $\cM(\Omega;\R{d\times N})$.}
\end{equation}
%	\begin{equation}\label{centerline}
%	D^s u_n\wsto (\nabla g-G)\cL^N+D^s g\qquad \text{in $\cM(\Omega;\R{d\times N})$.}
%	\end{equation}
\end{proposition}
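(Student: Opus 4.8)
The plan is to deduce both assertions from the quantitative form of the Choksi--Fonseca Approximation Theorem and then to read off \eqref{centerline} by a soft weak-$*$ argument. The one substantive ingredient is the total variation estimate \eqref{appEST}, which I would quote from \cite[Theorem~1.2]{S2015}; everything after that is bookkeeping.

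\emph{Step 1: an approximating sequence with controlled total variation.} For a fixed $(g,G)\in SD(\Omega;\R d\times\R{d\times N})$, \cite[Theorem~1.2]{S2015} (a refinement of \cite[Theorem~2.12]{CF1997}) produces a sequence $u_n\in SBV(\Omega;\R d)$ obeying \eqref{appCF} together with $|Du_n|(\Omega)\le C\|(g,G)\|_{SD}$ for a constant $C$ depending only on $N$ and $d$ (not on $(g,G)$, and after rescaling not on $\Omega$). Heuristically such a sequence is obtained by a Vitali-type covering of $\Omega$ by small cubes and, on each cube, replacing $g$ by an affine map whose gradient matches a local average of $G$: the newly created jumps on the cube faces carry total mass comparable to $\int_\Omega|\nabla g-G|\le|Dg|(\Omega)+\|G\|_{L^1}$, the surviving jumps of $g$ contribute at most $|Dg|(\Omega)$, and the bulk gradients contribute $\|\nabla u_n\|_{L^1}\lesssim\|G\|_{L^1}$, so that $|Du_n|(\Omega)\lesssim|Dg|(\Omega)+\|G\|_{L^1}\le\|(g,G)\|_{SD}$. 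The precise estimate is exactly the content of \cite[Theorem~1.2]{S2015}, so I would simply cite it.

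\emph{Step 2: identification of the singular limit.} From $u_n\to g$ in $L^1(\Omega;\R d)$ one gets $Du_n\to Dg$ in $\mathcal D'(\Omega;\R{d\times N})$, and the uniform bound \eqref{appEST} upgrades this to $Du_n\wsto Dg$ in $\cM(\Omega;\R{d\times N})$ (the bound allows passing from $C^\infty_c$ to $C_0$ test functions). By \eqref{appCF} we also have $\nabla u_n\,\cL^N\wsto G\,\cL^N$ in $\cM(\Omega;\R{d\times N})$, with $\|\nabla u_n\|_{L^1}\le|Du_n|(\Omega)$ uniformly bounded. Since $u_n\in SBV$, one has $D^su_n=Du_n-\nabla u_n\,\cL^N$, and subtracting the two weak-$*$ limits gives
\[
D^su_n\;\wsto\;Dg-G\,\cL^N=(\nabla g-G)\,\cL^N+D^sg\qquad\text{in }\cM(\Omega;\R{d\times N}),
\]
which is \eqref{centerline}. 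The limit measure being uniquely determined, the convergence in fact holds along the full sequence; alternatively one may first extract a weak-$*$ cluster point of $\{D^su_n\}$ using \eqref{appEST} and Banach--Alaoglu and then identify it as above, which accounts for the harmless phrase ``up to a subsequence''.

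\emph{Main obstacle.} The genuinely nontrivial point is Step~1: the bare Approximation Theorem \eqref{appCF} gives no control on $|Du_n|(\Omega)$, and a naive construction could let the bulk gradients or the inserted jumps blow up in $L^1$ while still converging weakly-$*$. Securing the uniform bound \eqref{appEST} with a \emph{universal} constant requires the quantitative construction behind \cite[Theorem~1.2]{S2015}; once it is in hand, the derivation of \eqref{centerline} is routine functional analysis.
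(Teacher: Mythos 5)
Your proof is correct, and your Step 2 --- distributional convergence of $Du_n$ from the $L^1$ convergence of $u_n$ to $g$, upgraded to weak-$*$ convergence in $\cM(\Omega;\R{d\times N})$ by the uniform mass bound, followed by subtraction of $\nabla u_n\,\cL^N\wsto G\,\cL^N$ and the $SBV$ decomposition \eqref{810} --- is exactly the paper's argument for \eqref{centerline}. The only real divergence is in Step 1. Where you quote the quantitative bound from \cite[Theorem~1.2]{S2015} and sketch a covering/affine-replacement construction, the paper builds the sequence explicitly and more cheaply: by Alberti's theorem (Theorem~\ref{Al}) applied to $f=\nabla g-G$ it selects $h\in SBV(\Omega;\R{d})$ with $\nabla h=\nabla g-G$ and jump part of total mass at most $C_N\lVert\nabla g-G\rVert_{L^1}$, approximates $h$ in $L^1$ by piecewise constant maps $\bar h_n$ with $|D\bar h_n|(\Omega)\to|Dh|(\Omega)$ (Lemma~\ref{ctap}), and sets $u_n\coloneqq g+\bar h_n-h$. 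This construction has the bonus that $\nabla u_n=G$ holds exactly $\cL^N$-a.e.\ (so the second half of \eqref{appCF} is immediate), and \eqref{appEST} follows from the triangle inequality together with the two quoted estimates. Both routes are legitimate, but note that citing \v{S}ilhav\'y here is essentially citing the statement being proved --- the paper presents the proposition precisely as a restatement of that theorem --- so the Alberti-based construction is what keeps the argument self-contained. Your closing observation that \eqref{centerline} in fact holds along the full sequence, by uniqueness of the weak-$*$ cluster point, is correct and slightly sharper than the ``up to a subsequence'' retained in the statement.
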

The proof rests on the following two results.
\begin{theorem}[{\cite[Theorem~3]{AL}}]\label{Al}
Let $f \in L^1(\Omega; \R{d{\times} N})$. 
Then there exist $h \in SBV(\Omega; \R d)$ and a Borel function $\beta\colon\Omega\to\R{d{\times} N}$ such that
\begin{equation}\label{817}
Dh = f \,{\cL}^N + \beta \cH^{N-1}\res S_h, \qquad
\int_{S_h\cap\Omega} |\beta(x)| \, \de \cH^{N-1}(x) \leq C_N \lVert f\rVert_{L^1(\Omega; \R{d {\times} N}),}
\end{equation}
where $C_N>0$ is a constant depending only on $N$. \qed
\end{theorem}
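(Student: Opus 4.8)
The plan is to build $h$ by a successive-approximation scheme whose elementary ingredient is an explicit \emph{sawtooth building block} that realizes a prescribed \emph{constant} gradient on a cube by means of jumps whose total mass is controlled. First I would fix a cube $Q\subset\R N$ and a constant matrix $A\in\R{d\times N}$ and construct $w\in SBV(Q;\R d)$, compactly supported in $Q$ (hence extendable by zero to all of $\R N$), enjoying: (i) $\norma{w}_{L^\infty(Q)}$ and $\norma{w}_{L^1(Q)}$ as small as desired; (ii) $\norma{\nabla w-A}_{L^1(Q)}$ as small as desired; and (iii) the jump bound $\norm{D^j w}(Q)\leq C_N\norm A\,\cL^N(Q)$. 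The construction is row by row: writing $A_i\in\R N$ for the $i$-th row of $A$, I would take $w_i$ a one-dimensional sawtooth of slope one and small period composed with $x\mapsto A_i\cdot x$, so that $\nabla w_i=A_i$ off a family of hyperplanes orthogonal to $A_i$, while the jumps of $w_i$ live on those hyperplanes; a direct count gives total jump mass equal to $\norm{A_i}\,\cL^N(Q)$ up to a dimensional factor, with amplitude of the order of the period, hence arbitrarily small. Multiplying by a Lipschitz cut-off supported in $Q$ enforces compact support while adding only an absolutely continuous gradient concentrated in a thin boundary layer of small $L^1$ mass, and while shrinking (never enlarging) the jumps. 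The Gauss--Green balance $\int_Q\nabla w\,\de x+\int_{S_w\cap Q}[w]\otimes\nu_w\,\de\cH^{N-1}=0$, valid because $w$ is compactly supported in $Q$, shows that jumps carrying total mass of order $\norm A\,\cL^N(Q)$ are unavoidable, so (iii) is sharp in scaling.

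Given $f\in L^1(\Omega;\R{d\times N})$, the density of piecewise-constant fields in $L^1$ lets me choose a fine grid and a field $f_{\mathrm{pc}}=\sum_Q A_Q\ind_Q$ with $\norma{f-f_{\mathrm{pc}}}_{L^1}\leq\tfrac12\norma{f}_{L^1}$. Placing one building block on each grid cube and summing produces $w$ with $\norm{D^j w}(\Omega)\leq C_N\sum_Q\norm{A_Q}\cL^N(Q)=C_N\norma{f_{\mathrm{pc}}}_{L^1}$ and, by (ii), with $\nabla w$ as close to $f_{\mathrm{pc}}$ in $L^1$ as I wish, so that the residual $f-\nabla w$ has $L^1$ norm at most $\theta\norma{f}_{L^1}$ for a fixed $\theta<1$. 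Iterating on the successive residuals $f^{(k)}\coloneqq f-\nabla\bigl(w^{(1)}+\cdots+w^{(k-1)}\bigr)$ yields blocks $w^{(k)}$ whose residuals decay geometrically, $\norma{f^{(k)}}_{L^1}\leq\theta^{\,k-1}\norma{f}_{L^1}$, while each contributes jump mass at most $C_N\,c\,\norma{f^{(k)}}_{L^1}$.

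I would then set $h\coloneqq\sum_{k}w^{(k)}$, choosing the block amplitudes so small that $\sum_k\norma{w^{(k)}}_{L^1}<\infty$. Together with the geometric bounds $\sum_k\norma{\nabla w^{(k)}}_{L^1}<\infty$ and $\sum_k\norm{D^j w^{(k)}}(\Omega)<\infty$, this makes the partial sums a Cauchy sequence in $BV(\Omega;\R d)$ whose distributional derivatives converge in total variation. The telescoping identity $\sum_k\nabla w^{(k)}=\lim_k\bigl(f-f^{(k)}\bigr)=f$ in $L^1$ shows that the absolutely continuous part of $Dh$ is exactly $f$, and the closure theorem for $SBV$ (see \cite{AFP}), applicable thanks to the uniform control of the total variations and the $L^1$ convergence of the absolutely continuous gradients, guarantees $h\in SBV(\Omega;\R d)$, i.e.\ that no Cantor part survives. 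Writing $D^j h=\beta\,\cH^{N-1}\res S_h$, lower semicontinuity of the total variation and the geometric series give $\int_{S_h\cap\Omega}\norm{\beta}\,\de\cH^{N-1}=\norm{D^j h}(\Omega)\leq\sum_k\norm{D^j w^{(k)}}(\Omega)\leq\frac{C_N}{1-\theta}\norma{f}_{L^1}$, which is \eqref{817}.

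The two delicate points are the following. First, engineering the building block so that a prescribed constant gradient is produced by jumps of total mass controlled by the optimal factor $\norm A\,\cL^N(Q)$ with a constant depending on $N$ alone, while keeping compact support and small amplitude: the tension between compact support and prescribed nonzero gradient is precisely what forces the jumps, and the balance identity above certifies that the estimate cannot be improved in scaling. Second, and I expect this to be the main obstacle, the limit must genuinely land in $SBV$: a priori an infinite superposition of piecewise-smooth discontinuities could generate a Cantor part, so the \emph{quantitative} control of both $\sum_k\norma{\nabla w^{(k)}}_{L^1}$ and $\sum_k\norm{D^j w^{(k)}}(\Omega)$, feeding the $SBV$ closure theorem, is what forces convergence to a special function of bounded variation whose absolutely continuous gradient is exactly $f$.
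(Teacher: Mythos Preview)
The paper does not supply its own proof of this theorem: it is quoted from \cite{AL} and closed with a \qed\ immediately after the statement, so there is nothing in the paper to compare against. Your proposal does, however, reproduce in outline Alberti's original strategy---the sawtooth building block realizing a prescribed constant gradient with jump mass $\leq C_N|A|\,\cL^N(Q)$, the $L^1$-approximation of $f$ by piecewise constants, the geometric iteration on residuals, and the summation in $BV$---and is essentially correct.

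One step deserves sharpening. Your appeal to ``the closure theorem for $SBV$ (see \cite{AFP})'' is not the right citation: Ambrosio's $SBV$-compactness theorem requires either superlinear control on $|\nabla u_n|$ or a uniform bound on $\cH^{N-1}(S_{u_n})$, and you have arranged neither. What actually does the job is more elementary and you already have the ingredients for it. The partial sums converge \emph{strongly} in $BV$, so $D^s h$ is the total-variation limit of $\sum_{k\leq n} D^j w^{(k)}$ and is therefore concentrated on $\bigcup_k S_{w^{(k)}}$. Each $S_{w^{(k)}}$ is a finite union of hyperplane pieces, hence $\cH^{N-1}$-finite, and the countable union is $\cH^{N-1}$-$\sigma$-finite. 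Since the Cantor part of any $BV$ function vanishes on every $\cH^{N-1}$-$\sigma$-finite Borel set (see \cite[Proposition~3.92]{AFP}), it follows that $D^c h=0$, whence $h\in SBV$ with $\nabla h=f$ and $D^j h=\beta\,\cH^{N-1}\res S_h$ as required. Replacing the closure-theorem citation with this direct argument closes the gap.
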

\begin{lemma}[{\cite[Lemma~2.9]{CF1997}}]\label{ctap}
Let $h \in BV(\Omega; \R d)$. Then there exist piecewise constant functions $\bar h_n\in SBV(\Omega;\R d)$  such that $\bar h_n \to h$ in $L^1(\Omega; \R d)$ and
\begin{equation}\label{818}
|Dh|(\Omega) = \lim_{n\to \infty}| D\bar h_n|(\Omega) = \lim_{n\to \infty} \int_{S_{\bar h_n}} |[\bar h_n](x)|\; \de\cH^{N-1}(x).
\end{equation}
\end{lemma}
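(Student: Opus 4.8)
The plan is to isolate the only nontrivial assertion, namely the first equality. For any piecewise constant function the density $\nabla\bar h_n$ and the Cantor part $D^c\bar h_n$ both vanish, so \eqref{810} gives $D\bar h_n=[\bar h_n]\otimes\nu_{\bar h_n}\,\cH^{N-1}\res S_{\bar h_n}$ and hence $|D\bar h_n|(\Omega)=\int_{S_{\bar h_n}}|[\bar h_n]|\,\de\cH^{N-1}$ automatically; the second equality therefore carries no content. Moreover, total variation is lower semicontinuous with respect to $L^1$ convergence, so every sequence $\bar h_n\to h$ in $L^1(\Omega;\R{d})$ satisfies $|Dh|(\Omega)\le\liminf_n|D\bar h_n|(\Omega)$. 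It thus suffices to produce piecewise constant $\bar h_n\to h$ in $L^1$ with the matching upper bound $\limsup_n|D\bar h_n|(\Omega)\le|Dh|(\Omega)$, after which the squeeze yields simultaneously $\bar h_n\to h$ in $L^1$ and $|D\bar h_n|(\Omega)\to|Dh|(\Omega)$.

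To build such a sequence I would argue in two steps and then diagonalize. First, by the standard smooth approximation theorem in $BV$ (\cite{AFP}), there are $g_k\in C^\infty(\Omega;\R{d})\cap BV(\Omega;\R{d})$ with $g_k\to h$ in $L^1$ and $|Dg_k|(\Omega)=\int_\Omega|\nabla g_k|\,\de x\to|Dh|(\Omega)$, i.e. $g_k\to h$ strictly; since strict convergence is metrizable, it is enough to approximate each smooth $g_k$ strictly by piecewise constant functions. Second, for a fixed smooth $g$ I would discretize. In the scalar case $d=1$ this is transparent: rounding the values, $\bar g^{(m)}\coloneqq 2^{-m}\lfloor 2^m g\rfloor$, is piecewise constant, converges uniformly to $g$, and retains as its super-level sets exactly those of $g$; the coarea formula then gives $|D\bar g^{(m)}|(\Omega)=2^{-m}\sum_{j}\per(\{g>j2^{-m}\};\Omega)$, a Riemann sum converging to $\int_\R\per(\{g>t\};\Omega)\,\de t=|Dg|(\Omega)$. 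Crucially, because the exact (possibly oblique) level sets of $g$ are kept, no spurious interface is created and there is no overshoot.

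For vector-valued $g$ one again discretizes $g$ on a fine partition, taking the constant on each piece to be the average of $g$ there; this produces piecewise constant $\bar g^{(m)}\to g$ in $L^1$ with $D\bar g^{(m)}\wsto Dg$ in $\cM(\Omega;\R{d\times N})$. The main obstacle is precisely the matching upper bound $\limsup_m|D\bar g^{(m)}|(\Omega)\le|Dg|(\Omega)$ in this vector setting. A naive cubic partition is \emph{not} adequate: it forces every jump to be aligned with a coordinate direction, so that the Euclidean density $|\nabla g|$ gets replaced by the strictly larger, anisotropic ``$\ell^1$-type'' sum of the norms of the columns of $\nabla g$, and the total variation overshoots. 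The real work is therefore to choose the partition adapted to $Dg$ — orienting the interfaces, and hence the jump vectors $[\bar g^{(m)}]$, according to $\nabla g$ on each small cell, in the spirit of the exact-level-set argument used in the scalar case — so that $\int_{S_{\bar g^{(m)}}}|[\bar g^{(m)}]|\,\de\cH^{N-1}\to\int_\Omega|\nabla g|\,\de x$ with no loss of mass. Once this upper bound is secured for smooth $g$, a diagonal extraction against the strict approximants $g_k\to h$ delivers the desired piecewise constant sequence for $h$, completing the proof.
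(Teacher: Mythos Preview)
The paper does not prove this lemma; it is quoted verbatim from \cite[Lemma~2.9]{CF1997} and used as a black box in the proof of Proposition~\ref{appTHM}. So there is no ``paper's own proof'' to compare against, and the question is whether your sketch stands on its own.

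Your scaffolding is sound: the second equality in \eqref{818} is automatic for piecewise constant functions, lower semicontinuity gives one inequality, so only the matching upper bound needs a construction; the reduction to smooth $g$ via strict approximation is standard; and your scalar argument via coarea is correct and clean.

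The gap is exactly where you say ``the real work is therefore to choose the partition adapted to $Dg$'' and then stop. In the vector case this is not a detail to be filled in later --- it is the entire content of the lemma, and your intuition ``orient the interfaces according to $\nabla g$'' does not carry over, because $\nabla g(x)$ is a matrix, not a direction. Worse, there is a genuine obstruction. For any piecewise constant $u$, the measure $Du=[u]\otimes\nu_u\,\cH^{N-1}\res S_u$ takes \emph{rank-one} values. Testing against $\phi=\eta I$ with $0\le\eta\le1$, $\eta\in C_c^\infty(Q)$, and using $|\tr(a\otimes\nu)|=|a\cdot\nu|\le|a|=|a\otimes\nu|_F$, one gets for $g=\mathrm{id}$ on $Q\subset\R2$ that
\[
2\int_Q\eta\,\de x=\lim_n\int_Q\eta\,\tr\big([u_n]\otimes\nu_{u_n}\big)\,\de\cH^{1}\le\liminf_n|Du_n|(Q),
\]
hence $\liminf_n|Du_n|(Q)\ge 2>\sqrt{2}=|D\mathrm{id}|(Q)$. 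So no reorientation of interfaces can achieve the Frobenius total variation here; the best one can hope for from piecewise constant approximants is the nuclear norm $\|\nabla g\|_*$, which your cubic and any laminate construction already attain. This means the equality in \eqref{818}, read with the Euclidean total variation used throughout the paper, cannot hold for general $d\ge2$, and you should go back to \cite{CF1997} to check the precise norm convention or statement there.

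For the purpose at hand this is harmless: in the proof of Proposition~\ref{appTHM} only the bound $|D\bar h_n|(\Omega)\le C|Dh|(\Omega)$ is used to obtain \eqref{819}, and your cubic construction (or any reasonable lattice discretisation) yields $\limsup_n|D\bar h_n|(\Omega)\le\sqrt{N}\,|Dh|(\Omega)$, which suffices.
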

\begin{proof}[Proof of Proposition~\ref{appTHM}]
Let $(g,G)\in SD(\Omega;\R{d}\times\R{d\times N})$ and, by Theorem~\ref{Al} with $f\coloneqq \nabla g-G$, let $h\in SBV(\Omega;\R{d})$ be such that $\nabla h=\nabla g-G$.
Furthermore, let $\bar h_n\in SBV(\Omega;\R{d})$ be a sequence of piecewise constant functions approximating $h$, as per Lemma~\ref{ctap}.
Then, the sequence of functions 
\begin{equation}
u_n\coloneqq g+\bar h_n-h
\end{equation}
is easily seen to approximate $(g,G)$ in the sense of \eqref{appCF}. 
In fact, $ u_n \to g$ in $L^1$ and $\nabla u_n(x) = G(x)$ for $\cL^N$-a.e.~$x\in\Omega$.
Invoking the triangle inequality, the inequality in \eqref{817}, and \eqref{818}, we obtain for $C=3(1+C_N)$
%a suitable constant $C>0$
\begin{equation}\label{819}
|Du_n|(\Omega)\leq C\|(g,G)\|_{SD(\Omega;\R{d}\times\R{d\times N})},\qquad\text{for all $n\in\N{}$ sufficiently large,}
\end{equation}
so that \eqref{appEST} is proved for a suitable ``tail'' of the sequence $u_n$. 
The convergence of $u_n\to g$ in $L^1$ implies that $Du_n$ converges to $Dg$ in the sense of distributions.
The uniform bound \eqref{819} ensures the existence of a (not relabeled) weakly-* converging subsequence such that $Du_n\wsto Dg$ in $\cM(\Omega;\R{d\times N})$, so that, since $\nabla u_n\wsto G$ in $\cM(\Omega;\R{d\times N})$, we have
$$%\begin{equation}\label{unlim}
D^s u_n\wsto (\nabla g-G)\cL^N+D^s g\qquad \text{in $\cM(\Omega;\R{d\times N})$,}
$$%\end{equation}
which is \eqref{centerline}. The proof is concluded.
\end{proof}

We now define the set of admissible sequences for the limit problems that we consider. Given $(g,G)\in SD(\Omega;\R{d}\times\R{d\times N})$, we let
\begin{equation}\label{Addef}
\Ad(g,G)\coloneqq\{u_n\in SBV(\Omega;\R{d}): \text{$u_n$ converges to $(g,G)$ in the sense of \eqref{appCF} and \eqref{centerline} holds}\}.\end{equation}
Lemma~\ref{appTHM} guarantees that, for every $(g,G)\in SD(\Omega;\R{d}\times\R{d\times N})$, the set $\Ad(g,G)$ is not empty.

%\blu{ We remark that this result guarantees the existence of elements in $\Ad(g,G)$ which converge to $(g,G)\in SD(\Omega;\mathbb R^d\times \mathbb R^{d \times N})$ in the sense of \eqref{1000}.}
%\gre{
%\begin{theorem}[{Approximation Theorem (see \cite[Theorem~2.12]{CF1997})}]\label{813}
%Let $(g,G)\in SD(\Omega;\R{d})$. Then there exist $u_n\in SBV(\Omega;\R{d})$ such that $u_n\wSD(g,G)$ in the sense of \eqref{1000} and there exists $C>0$ such that 
%\begin{equation}\label{819}
%|D u_n|(\Omega)\leq C\big(1+\|(g,G)\|_{SD}\big).
%\end{equation}
%\end{theorem}
%}
%\red{prove Theorem~\ref{appTHM}.}

\subsection{Measure Theory}\label{sec:MT}
We collect here some basic definitions and results from measure theory that will be used throughout the paper. In particular, we present with more details the notions of weak-* and $\langle\cdot\rangle$-strict convergences for measures, we define two classes of admissible non-local energy densities $\Psi$, and we conclude by stating two Reshetnyak-type continuity theorems. 
%and conclude by defining the sets of 

%\red {Comment: need to check why we need $U$ to be measurable. Anyway, there is inconsistency with the notation, and the following phrase doesn't make sense, since we say that the measures are defined in $\Omega$}

%\red{Given $U\subset\R{N}$ an open set, we denote by $\cM(U;\R{\ell})$ the set of $\R{\ell}$-valued Radon measures defined on $U$.}

%\red{Why don't just define everything in this subsection for $\cM(\Omega;\R{\ell})$? I checked and this is the usual in other papers, for instance on my paper with Pedro, Milena and Margarida \cite{BCMS2013} or in \cite{KR2}.}

%\blu{In what follows we will need the following different notions of convergence:}
In addition to the notion of weak-* convergence of measures introduced in Section~\ref{section:SBV} (and repeated in point (i) of Definition~\ref{1100}), we give further definitions of convergence, which will be relevant in the sequel. We point out that it is crucial to distinguish if the domain is an open set $U$ or a closed one $\overline U$.
\begin{definition}\label{1100}
Let $\mu_n,\mu\in\cM(U;\R{\ell})$ (or in $\cM(\overline U;\R{\ell})$).
%\red{Let $\mu_n\red{=m_n^a\cL^N+\mu_n^s}\in\cM(U;\R{\ell})$ be a sequence of measures and let $\mu=\red{m^a\cL^N+\mu^s}\in\cM(U;\R{\ell})$. }
\begin{itemize}
\item[(i)] %\note{Keep? E: I would still leave it here, besides it appears above} 
We say that $\mu_n$ converges \emph{weakly-*} to $\mu$ in $\cM(U;\R{\ell})$ %{(in symbols $\mu_n\wsto\mu$) 
if
\begin{equation*}%\label{1101}
\lim_{n\to\infty}\int_U \varphi(x)\,\de\mu_n(x) = \int_U \varphi(x)\,\de\mu(x) \qquad\text{for every $\varphi\in C_0(U;\R{\ell})$.}
\end{equation*}
\item[(ii)] We say that $\mu_n$ converges \emph{locally weakly-*} to $\mu$ in $\cM(U;\R{\ell})$ if
\begin{equation*}%\label{1101}
\lim_{n\to\infty}\int_U \varphi(x)\,\de\mu_n(x) = \int_U \varphi(x)\,\de\mu(x) \qquad\text{for every $\varphi\in C_c(U;\R{\ell})$}.
\end{equation*}
\end{itemize}
(Notice that (i) and (ii) coincide if $U$ is bounded.)
\begin{itemize}
\item[(iii)] %\red{If\red{, furthermore,} $\mu_n,\mu\in\cM(\overline U;\R{\ell})$,} 
We say that $\mu_n$ converges \emph{weakly-*} to $\mu$ in $\cM(\overline U;\R{\ell})$ if
\begin{equation*}%\label{1101}
\lim_{n\to\infty}\int_{\overline U} \varphi(x)\,\de\mu_n(x) = \int_{\overline U} \varphi(x)\,\de\mu(x) \qquad\text{for every $\varphi\in C_{\blu{0}}(\overline U;\R{\ell})$.}
\end{equation*}
\end{itemize}
The weak-* convergence of $\mu_n$ to $\mu$ is denoted by the symbol $\mu_n\wsto\mu$.
\begin{itemize}
\item[(iv)] We say that $\mu_n$ \emph{converges $\langle\cdot\rangle$-strictly} to $\mu$ in $\cM(U;\R{\ell})$ (or in $\cM(\overline U;\R{\ell})$) if $\mu_n\wsto\mu$ and in addition, $\langle\mu_n\rangle(U)\to\langle\mu\rangle(U)$
%$\mu_n\wsto\mu$ 
(or $\langle\mu_n\rangle(\overline U)\to\langle\mu\rangle(\overline U)$), where, for every $A\in \cB(U)$ (or $A\in \cB(\overline U)$),
\begin{equation*}\label{1017}
\langle\mu\rangle(A)\coloneqq \int_A \sqrt{1+|m^a(x)|^2}\,\de x+|\mu^s|(A).
\end{equation*}
%and analogously for $\langle\mu_n\rangle(A)$, then we say that $\mu_n$ \emph{converges $\langle\cdot\rangle$-strictly} to $\mu$.}
\end{itemize}
\end{definition}
The next proposition establishes semicontinuity and continuity results results with respect to the weak-* convergence of measures.
\begin{proposition}[{\cite[Proposition~1.62]{AFP}}]\label{AFPb}
Let $\mu_n\in\cM(U;\R{\ell})$ be a sequence of bounded Radon measures locally weakly-* converging to $\mu$. 
\begin{itemize}
\item[(a)] If the measures $\mu_h$ are positive, then for every lower semicontinuous function $u\colon U\to[0,+\infty]$
$$\int_U u(x)\,\de\mu(x)\leq\liminf_{h\to\infty} \int_U u(x)\,\de \mu_h(x)$$
and for every upper semicontinuous function $v\colon U\to[0,+\infty)$ with compact support
$$\int_U v(x)\,\de\mu(x)\geq\limsup_{h\to\infty} \int_U v(x)\,\de \mu_h(x).$$
\item[(b)] If $|\mu_n|$ locally weakly-* converges to $\Lambda$, then $\Lambda \geq |\mu|$.
Moreover, if $K$ is a relatively compact $\mu$-measurable subset of $U$ such that $\Lambda(\partial K) = 0$, then $ \mu_n (K) \to \mu(K)$ as $n \to \infty$.
More generally,
$$%\begin{equation}\label{856}
\int_{U} u(x) \, \de \mu(x) = \lim_{n\to \infty} \int_{U} u(x) \, \de \mu_n(x),
$$%\end{equation}
%\blu{Ze: added a reference, so that we can refer to it in the proof of Thm 1.2}
for every bounded Borel function $u\colon U \to \R{}$ with compact support, such that the set of discontinuity points is $\Lambda$-negligible. \qed
\end{itemize}
\end{proposition}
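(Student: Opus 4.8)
The plan is to handle the two items of the Proposition in turn, reducing everything to the interplay between semicontinuity along the positive measures $|\mu_n|$ and the regularity of finite Radon measures. For Part~(a), I would first represent the nonnegative lower semicontinuous $u$ as an increasing pointwise limit of functions $u_k\in C_c(U)$ with $0\le u_k\le u$ — for instance by truncating at height $k$ and multiplying by cutoff functions that exhaust $U$ the Moreau--Yosida regularizations $x\mapsto\inf_{y\in U}(u(y)+k|x-y|)$, which are $k$-Lipschitz and increase pointwise to $u$ precisely because $u$ is lower semicontinuous. For fixed $k$, local weak-* convergence gives $\int_U u_k\,\de\mu=\lim_n\int_U u_k\,\de\mu_n$, and $u_k\le u$ together with $\mu_n\ge0$ gives $\int_U u_k\,\de\mu\le\liminf_n\int_U u\,\de\mu_n$; letting $k\to\infty$ and invoking monotone convergence on the left yields the lower semicontinuity inequality. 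For the second inequality I would write the compactly supported usc $v\ge0$ as a \emph{decreasing} pointwise limit of functions $w_k\in C_c(U)$ with $w_k\ge v$ and all supports contained in one fixed compact neighbourhood of $\spt v$ (truncated, localized sup-convolutions), apply local weak-* convergence for each $k$, and pass to the limit in $k$ by dominated convergence — legitimate because $\mu$ is finite and $w_k\le w_1\in C_c(U)$.

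For the bound $\Lambda\ge|\mu|$ in Part~(b) I would invoke the duality formula $|\mu|(A)=\sup\{\int_U\varphi\cdot\de\mu:\varphi\in C_c(U;\R{\ell}),\ |\varphi|\le1,\ \spt\varphi\subset A\}$ for open $A\subset U$. For admissible $\varphi$, local weak-* convergence and the pointwise bound $\varphi\cdot\mu_n\le|\varphi|\,|\mu_n|$ give $\int_U\varphi\cdot\de\mu=\lim_n\int_U\varphi\cdot\de\mu_n\le\lim_n\int_U|\varphi|\,\de|\mu_n|=\int_U|\varphi|\,\de\Lambda\le\Lambda(A)$, using that $|\varphi|\in C_c(U)$ and $|\mu_n|\wsto\Lambda$ locally. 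Taking the supremum over such $\varphi$ gives $|\mu|(A)\le\Lambda(A)$ for open $A$, and outer regularity of these finite measures extends the inequality to all Borel sets; in particular $|\mu|(\partial K)\le\Lambda(\partial K)=0$.

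The heart of the matter — and the step I expect to be the main obstacle — is the convergence $\mu_n(K)\to\mu(K)$, because the $\mu_n$ are signed (or vector) measures and the semicontinuity of Part~(a) cannot be applied to them directly; one must pass from continuous test functions to $\ind_K$ by hand. Given $\varepsilon>0$, I would choose a neighbourhood $V=\{x:\dist{x}{\overline K}<t\}$ of $\overline K$ with $t$ small enough that $\overline V\subset U$, that $\Lambda(\partial V)=0$ (only countably many of the spheres $\{\dist{\cdot}{\overline K}=t\}$ can be $\Lambda$-charged), and that $\Lambda(V\setminus\overline K)<\varepsilon$, and then $\varphi\in C_c(U)$ with $\ind_{\overline K}\le\varphi\le\ind_V$. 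A pointwise comparison of $\ind_K$ and $\varphi$ gives $|\mu_n(K)-\int_U\varphi\,\de\mu_n|\le|\mu_n|(V\setminus K)$ and, likewise, $|\mu(K)-\int_U\varphi\,\de\mu|\le|\mu|(V\setminus K)\le\Lambda(V\setminus K)<\varepsilon$. The crucial input is that $|\mu_n|(K)\to\Lambda(K)$ and $|\mu_n|(V)\to\Lambda(V)$: each follows from Part~(a) applied to the positive sequence $|\mu_n|\wsto\Lambda$, by squeezing $|\mu_n|$ between its values on the interior (lsc indicator) and on the closure (usc, compactly supported indicator) and using $\Lambda(\partial K)=\Lambda(\partial V)=0$; hence $|\mu_n|(V\setminus K)\to\Lambda(V\setminus K)<\varepsilon$. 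Combining these bounds with $\int_U\varphi\,\de\mu_n\to\int_U\varphi\,\de\mu$ gives $\limsup_n|\mu_n(K)-\mu(K)|\le2\varepsilon$, and letting $\varepsilon\to0$ finishes this step; the same argument applies verbatim to vector-valued $\mu_n$.

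Finally, for the general statement with $u$ bounded, Borel, compactly supported, and with $\Lambda$-negligible discontinuity set $D_u$, I would reduce to $0\le u\le1$ by an affine change of $u$ and use the layer-cake identity $u=\int_0^1\ind_{\{u>t\}}\,\de t$, so that $\int_U u\,\de\mu_n=\int_0^1\mu_n(\{u>t\})\,\de t$ by Fubini. Since $\partial\{u>t\}\subseteq D_u\cup\{u=t\}$ and the level sets $\{u=t\}$ are mutually disjoint, $\{u>t\}$ is relatively compact with $\Lambda(\partial\{u>t\})=0$ for a.e.\ $t$; the previous step then gives $\mu_n(\{u>t\})\to\mu(\{u>t\})$ for a.e.\ $t$, and dominated convergence in $t$ — with dominating constant $\sup_n|\mu_n|(V)<\infty$ for a fixed relatively compact neighbourhood $V$ of $\spt u$ — yields $\int_U u\,\de\mu_n\to\int_U u\,\de\mu$. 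Apart from the third paragraph, every step is a routine combination of the semicontinuity from Part~(a) with the regularity of finite Radon measures.
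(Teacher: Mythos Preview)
The paper does not supply its own proof of this proposition: it is quoted verbatim from \cite[Proposition~1.62]{AFP} and closed with a bare \qed, so there is nothing to compare against. Your argument is essentially the standard one and is correct in its architecture.

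There is one slip in the last paragraph. Reducing to $0\le u\le1$ ``by an affine change'' does \emph{not} preserve compact support: if $u$ vanishes outside a compact set, then $(u-m)/(M-m)$ equals a nonzero constant there, so for small $t$ the super-level set $\{u>t\}$ fails to be relatively compact and the previous step cannot be invoked. Moreover, even if you ignored compact support, the affine shift introduces the term $b\,\mu_n(U)$, and $\mu_n(U)\to\mu(U)$ is not guaranteed under \emph{local} weak-* convergence. The easy fix is to split $u=u^{+}-u^{-}$: both pieces are nonnegative, bounded, compactly supported in $\spt u$, and have discontinuity set contained in $D_u$, so your layer-cake argument applies to each separately and the conclusion follows by linearity. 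With this correction the proof goes through.
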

%\blu{Recall that given two \note{E: measurable?} functions
%$f, g $ defined in $\R{N}$, 
%the \emph{convolution}
% is a commutative and associative smoothing operation defined, \red{whenever it makes sense M: in $\R{N}$ is always makes sense, right?}, by
Recall that given two functions $f, g $ defined in $\R{N}$, the \emph{convolution} is a commutative and associative smoothing operation defined by
$$\big (g \ast f \big)(x) \coloneqq\int_{\mathbb R^{N}} f(x-y)g(y)d (y),$$
whenever the integral makes sense. One of the main features of the convolution operation is that the function $f*g$ is as regular as the most regular between $f$ and $g$.

%\note{Check the statement below, that collects two results from AFP. Need also to add the bracket-strict part}
Following \cite[Definition 2.1]{AFP}, we define for every $\mu \in \mathcal{M} (U;\mathbb R^\ell)$ and  every continuous function $f\colon \R{n}\to \mathbb R$ the \emph{convolution} 
of $\mu$ and $f$ (denoted by $\mu \ast f$)
$$(\mu \ast f)(x)\coloneqq\int_U f(x-y)d \mu (y).$$
%whenever the expression makes sense, \ora{and also in 
In this case, also, the regularity of $\mu*f$ is the higher between that of $\mu$ and that of $f$. In particular, since $f$ is continuous, so is $\mu*f$, so that this turns out to smoothen the measure $\mu$. Through the convolution, the singularities of $\mu$ are smeared, so that $\mu*f$ can be used as a density of the Lebesgue measure to generate a measure which is absolutely continuous with respect to $\cL^N$. The convergence results that we are going to mention establish the behavior of convolution under weak-* convergence.

Following \cite[page~41]{AFP}, if $\mu_h\in \mathcal M(U;\mathbb R^\ell)$ is a sequence locally weakly-$^*$ converging in $U$ to $\mu$ and $f \in C_c^\infty(\mathbb R^N)$, 
%\note{Or $C^\infty_c$? M: I'd say $C_c^\infty$, we aren't saving on regularity here - and the result is true regardless} 
then 
\begin{align}\label{cuconv}
	\mu_h \ast f \to \mu \ast f\quad \text{uniformly on the compact sets of $\mathbb R^N$.}
\end{align}

We now introduce the convolution kernels that will be responsible for the averaging process in the non-local term of the energy. They play the role of the division by $2r$ in the functional \eqref{non-local term}. Let
\begin{subequations}\label{1014}
\begin{equation}\label{1014a}
\alpha_r(x)\coloneqq \frac1{r^N}\alpha\Big(\frac xr\Big),
\end{equation}
where
\begin{equation}\label{1014b}
\alpha \in C_c^\infty(\R{N}), \qquad \spt \alpha \subset B_1,
\qquad \text{and}
\quad \int_{B_1} \alpha(x)\,\de x=1.
\end{equation}
\end{subequations}
It is easily verified that, for every $x \in \R{N}$, the convolution
\begin{equation}\label{convalpha}
\mu \ast \alpha_r(x)=\int_U \alpha_r(x-y)d \mu (y)= r^{-N}\int_{B_r(x)\cap U} \alpha\left(\frac{x-y}{r}\right) d \mu(y).
\end{equation}
is well defined and it is a regular function. The convolution of a measure with a kernel of the type in \eqref{1014} has good convergence properties with respect to the weak-* and $\langle\cdot\rangle$-strict convergences of measures.
\begin{theorem}[{\cite[Theorem 2.2]{AFP}, \cite[Proposition 2.22]{BCMS2013}}]\label{815}
%\red{ Arguing as in the new version of the proof of Theorem 1.2, I think we don't need the part marked below}
Let $\mu\in\cM(U;\R{\ell})$ and, for $r>0$, let $\alpha_r$ be as in \eqref{1014}. 
Define the measures $\mu_r\coloneqq (\mu*\alpha_r)\cL^N$.
Then, as $r\to0^+$,
\begin{itemize}
\item[(i)] $\mu_r$ locally weakly-* converge to $\mu$ and for every $A\in\cB(U_r)$
\begin{equation}\label{lT816}
\int_{A} |\mu*\alpha_r|(x)\,\de x\leq|\mu|(A^r)
\end{equation}
and the measures $|\mu_r|$ locally weakly-* converge in $U$ to $|\mu|$;
\item[(ii)] if  $ |\mu|(\partial U) =0$, there also holds $\langle\mu_r\rangle(U) \to \langle\mu\rangle(U)$. \qed
\end{itemize}
\end{theorem}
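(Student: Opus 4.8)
The plan is to derive both parts from standard mollification estimates together with the (semi)continuity of total variation recorded in Proposition~\ref{AFPb}; the only non-elementary ingredient is the (elementary) fact that $p\mapsto\sqrt{1+|p|^2}$ is $1$-Lipschitz on $\R{\ell}$. Throughout I would work with the density $\mu*\alpha_r\in L^1(U;\R{\ell})$ of the absolutely continuous measure $\mu_r$, so that $\langle\mu_r\rangle(U)=\int_U\sqrt{1+|\mu*\alpha_r|^2}\,\de x$, and I would freely extend $m^a$ by $0$ outside $U$.

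For part~(i): I would first check $\mu_r\wsto\mu$ locally. For $\varphi\in C_c(U;\R{\ell})$, Fubini's theorem gives
\[
\int_U\varphi\cdot\de\mu_r=\int_U(\varphi*\ti{\alpha_r})\cdot\de\mu,\qquad\ti{\alpha_r}(z):=\alpha_r(-z),
\]
and since $\varphi*\ti{\alpha_r}\to\varphi$ uniformly with supports eventually contained in a fixed compact subset of $U$, the right-hand side tends to $\int_U\varphi\cdot\de\mu$ because $|\mu|$ is finite. The bound \eqref{lT816} follows from the same computation with $|\mu|$ replacing $\mu$: for $A\in\cB(U_r)$,
\[
\int_A|\mu*\alpha_r|\,\de x\leq\int_U\Big(\int_A\alpha_r(x-y)\,\de x\Big)\de|\mu|(y)\leq|\mu|(A^r),
\]
since $\int_A\alpha_r(x-y)\,\de x\leq1$ and the inner integral vanishes when $y\notin A^r$. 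Finally, to get $|\mu_r|\wsto|\mu|$ I would extract a local weak-* limit $\Lambda$ of the uniformly bounded measures $|\mu_r|$, deduce $\Lambda\geq|\mu|$ from Proposition~\ref{AFPb}(b) and $\mu_r\wsto\mu$, and deduce $\int_U\varphi\,\de\Lambda\leq\int_U\varphi\,\de|\mu|$ for nonnegative $\varphi\in C_c(U)$ from $|\mu*\alpha_r|\leq|\mu|*\alpha_r$ and the Fubini identity above; hence $\Lambda=|\mu|$, and subsequence-independence of the limit yields $|\mu_r|\wsto|\mu|$ locally.

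For part~(ii): the lower bound $\liminf_{r\to0^+}\langle\mu_r\rangle(U)\geq\langle\mu\rangle(U)$ I would obtain by writing $\langle\nu\rangle(U)=\big|(\cL^N\res U,\nu)\big|(U)$ as the total variation of the $\R{1+\ell}$-valued measure $(\cL^N\res U,\nu)$ and using lower semicontinuity of total variation under local weak-* convergence (the first component is fixed, so $(\cL^N\res U,\mu_r)\wsto(\cL^N\res U,\mu)$ is exactly part~(i)). The upper bound is the main obstacle: the crude inequality $\sqrt{1+|p|^2}\leq1+|p|$ is too lossy (it only gives $\cL^N(U)+|\mu|(U)$ in place of $\langle\mu\rangle(U)$), so instead I would split $\mu=m^a\cL^N+\mu^s$, write $\mu*\alpha_r=(m^a*\alpha_r)+(\mu^s*\alpha_r)$, and use the $1$-Lipschitz property of the area integrand to obtain the pointwise bound
\[
\sqrt{1+|\mu*\alpha_r|^2}\leq\sqrt{1+|m^a*\alpha_r|^2}+|\mu^s*\alpha_r|.
\]
Integrating over $U$: the first term converges to $\int_U\sqrt{1+|m^a|^2}\,\de x$ — again by the $1$-Lipschitz property, since $m^a*\alpha_r\to m^a$ in $L^1(U)$ — while $\int_U|\mu^s*\alpha_r|\,\de x\leq|\mu^s|(U)$ for every $r$; here the hypothesis $|\mu|(\partial U)=0$ guarantees that the mollification transfers no mass of $\mu$ across $\partial U$. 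Hence $\limsup_{r\to0^+}\langle\mu_r\rangle(U)\leq\int_U\sqrt{1+|m^a|^2}\,\de x+|\mu^s|(U)=\langle\mu\rangle(U)$, which with the lower bound closes part~(ii). The delicate point is precisely this last step: one must not reach for convexity-type bounds that ignore the decomposition into absolutely continuous and singular parts, and one should resist appealing directly to a Reshetnyak continuity theorem (which would be circular here); the Lipschitz constant of $p\mapsto\sqrt{1+|p|^2}$ is exactly what lets one decouple the two parts cleanly.
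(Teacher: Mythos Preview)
The paper does not supply a proof of this theorem: it is quoted from \cite[Theorem~2.2]{AFP} and \cite[Proposition~2.22]{BCMS2013}, with the \qed\ symbol placed inside the statement to indicate that no argument is given. There is therefore no proof in the paper to compare against.

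Your argument is nonetheless correct and self-contained. Part~(i) is the standard Fubini argument ($\int\varphi\,\de\mu_r=\int(\varphi*\widetilde{\alpha}_r)\,\de\mu$ together with uniform convergence of the mollified test function), and your sandwich for $|\mu_r|$---the lower bound from Proposition~\ref{AFPb}(b) and the upper bound from the pointwise inequality $|\mu*\alpha_r|\leq|\mu|*\alpha_r$---is exactly what is needed. For part~(ii), writing $\langle\nu\rangle(U)=\big|(\cL^N\res U,\nu)\big|(U)$ reduces the lower bound to lower semicontinuity of total variation on open sets, and the $1$-Lipschitz bound on $p\mapsto\sqrt{1+|p|^2}$ cleanly decouples the absolutely continuous and singular parts in the upper bound, avoiding any circular appeal to Reshetnyak continuity. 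One small remark: since $\mu\in\cM(U;\R{\ell})$ is a measure on the \emph{open} set $U$, the hypothesis $|\mu|(\partial U)=0$ is vacuous as literally written, and indeed your upper-bound estimate $\int_U|\mu^s*\alpha_r|\,\de x\leq\int_{\R{N}}(|\mu^s|*\alpha_r)\,\de x=|\mu^s|(U)$ does not use it; your proof therefore goes through without that assumption, which is consistent with its being a slight over-statement in the paper's formulation.
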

%\note{The result in \cite{BCMS2013} is valid for mollifiers that are not necessarily positive. Maybe we could have the result in this more general case. Also, in there it is stated that the support of $ \alpha \subset \overline{B}_1$....THis would}
%\blu{
%The theorem above presents some straightforward generalizations of known results in measure theory to convolutions with kernels $\alpha_r\in C_b(B_r)$ only. More precisely, part (i) generalizes  \cite[Theorem~2.2(b) and (c)]{AFP} and part (ii) generalizes \cite[Proposition~2.22(iii)]{BCMS2013}, both of which are proved in the case $\alpha_r\in C_c^\infty(B_r)$.
%}
We introduced the convolution kernels $\alpha_r$ of \eqref{1014} which we will use to describe the averaging of the jumps in the non-local term of the energy. We now introduce and describe two classes of continuous functions from which we are going to take the energy density of the non-local energy.
%\blu{In order to account for the non-local term, we will  focus on two different classes of continuous functions 
We consider a continuous function $\Psi\colon\Omega\times\R{d\times N}\to[0,+\infty)$ belonging to the class (E) or (L), which are characterized as follows:
\begin{itemize} 
\item[(E)] $\Psi$ can be extended to a function (still denoted by $\Psi$) belonging to $C(\overline\Omega\times\R{d\times N})$ with the property that  $\lim_{t\to+\infty} \Psi(x,t\xi)/t$ exists uniformly in $x\in\overline\Omega$ and $\xi$ with $|\xi|=1$. 
(Such functions $\Psi$ form the class $\mathbf{E}(\Omega\times\R{d\times N})$ defined in \cite[Section~2.4]{KR}.)
In particular, this entails that
\begin{itemize} 
\item[(i)] $\Psi$ has at most linear growth at infinity with respect to the second variable, namely there exists $C_\Psi>0$ such that
\begin{equation}\label{S100}
|\Psi(x,\xi)|\leq C_\Psi(1+|\xi|)\qquad\text{for all $x\in\Omega$ and $\xi\in\R{d\times N}$;}
\end{equation}
\item[(ii)] for all $x\in \overline\Omega$ and $\xi\in\R{d\times N}$ there exists the limit
\begin{equation}\label{S101}
\lim_{\substack{x'\to x \\\xi'\to \xi \\ t\to+\infty}} \frac{\Psi(x',t\xi')}{t}.
\end{equation}
\end{itemize}
\item[(L)] 
\begin{itemize}
\item[(i)] $\Psi$ is Lipschitz with respect to the second variable, \emph{i.e.}, there exists $L_\Psi>0$ such that 
\begin{equation}\label{S102}
|\Psi(x,\xi)-\Psi(x,\xi')|\leq L_\Psi|\xi-\xi'|,\qquad\text{for all $x\in\Omega$ and $\xi,\xi'\in\R{d\times N}$;}
\end{equation}
\item[(ii)] there exists a continuous function $\omega\colon[0,+\infty)\to[0,+\infty)$, with $\omega(s)\to0^+$ as $s\to0^+$, such that
\begin{equation}\label{S103}
|\Psi(x,\xi)-\Psi(x',\xi)|\leq\omega(|x-x'|)(1+|\xi|),\qquad\text{for all $x,x'\in\Omega$, $\xi\in\R{d\times N}$.}
\end{equation}
\end{itemize}
Notice that, by fixing $\xi'\in\R{d\times N}$, \eqref{S102} implies that there exists $C_\Psi>0$ such that \eqref{S100} holds.
\end{itemize}
%\red{We say that $\Phi\in \mathbf{E}(U\times \R{\ell})$ if $\Phi$ belongs to the class (E). M: I don't think we need this} 

%A detailed description of the class (E) can be found in \cite[Section~2.4]{KR}, where it is also pointed out that functions $\Psi$ belonging to (E) satisfy automatically the at most linear growth at infinity condition \eqref{S100}.
%\red{Remark~\ref{diverse} offers a comment on the relationship between the classes (E) and (L).}
\begin{remark}\label{diverse}
The two classes (E) and (L) have a non-empty intersection, but also a non-trivial symmetric difference.
\\
An example of a function which belongs to (E) but not to (L), with $N=d=1$ (and hence $\ell=1$), is the function $\Psi\colon\R{}\to\R{}$ defined by $\Psi(\xi)=\sqrt{1-\xi^2}$ for $\xi\in[-1,1]$ and extended by periodicity. The limit in \eqref{S101} exists and equals $0$, but $\Psi$ is not Lipschitz.
\\
An example of a function which belongs to (L) but not to (E), again with $N=d=\ell=1$, is given by $\Psi\colon\R{}\to\R{}$ defined in terms of the sequence $\{\xi_n\}_{n=1}^{\infty}$, defined recursively by $\xi_1=1$ and $\xi_{n+1}=2n\xi_n$, for $n\in\N{}\setminus\{0\}$, and such that
$$\Psi(\xi)=
\begin{cases}
0, & 0\leq \xi \leq1, \\
\xi-\xi_n, & \xi_n\leq \xi \leq \displaystyle\frac{\xi_n+\xi_{n+1}}{2}, \quad n\in\N{}\setminus\{0\}, \\
\xi_{n+1}-\xi, & \displaystyle\frac{\xi_n+\xi_{n+1}}{2}\leq \xi \leq \xi_{n+1}, \quad n\in\N{}\setminus\{0\}.
\end{cases}
$$
Then $\Psi(\xi_n)/\xi_n=0$ for all $n\in\N{}\setminus\{0\}$ and 
$$\frac{\Psi\Big(\displaystyle\frac{\xi_n+\xi_{n+1}}{2}\Big)}{\displaystyle\frac{\xi_n+\xi_{n+1}}{2}}=\frac{\xi_{n+1}-\xi_n}{\xi_{n+1}+\xi_n}=\frac{2n-1}{2n+1},\qquad\text{for $n\in\N{}\setminus\{0\}$}.$$
Consequently,
$$\lim_{n\to\infty} \frac{\Psi(\xi_n)}{\xi_n}=0<1=\lim_{n\to\infty}\frac{\Psi\Big(\displaystyle\frac{\xi_n+\xi_{n+1}}{2}\Big)}{\displaystyle\frac{\xi_n+\xi_{n+1}}{2}},$$
so that %the recession function for $\Psi$ is not defined. 
\eqref{S101} does not hold. 
\qed
\end{remark}

We now introduce functionals defined on measures, which will be useful for our mathematical treatment of the problem we study (see Section~\ref{section:mathform}).
Given $\mu=m^a\cL^N+\mu^s\in\cM(U;\R{\ell})$ and $\Phi\colon U\times \R{\ell}\to[0,+\infty)$ continuous, let 
\begin{equation}\label{959}
\sI(\mu) \coloneqq \int_{U} \Phi (x, m^a(x))\,\de x +\int_{U} \Phi^\infty \bigg(x,\frac{\de \mu^s}{\de |\mu^s|}(x)\bigg)\,\de |\mu^s|(x),
\end{equation}
where $\Phi^\infty$ is the \emph{recession function} of $\Phi$ at infinity, defined by
\begin{equation}\label{defrecession2}
\Phi^\infty(x,\xi)\coloneqq \limsup_{\substack{x'\to x \\ \xi' \to \xi\\ t\to +\infty}}\frac{\Phi(x',t \xi')}{t}
\end{equation}
for every $x\in\overline U$, $\xi\in\S{\ell-1}$ and extended to $\R{\ell}$ by positive $1$-homogeneity. %\red{(compare with \eqref{1205})}.
%\red{Motivated by the fact that Lipschitz functions do not necessarily belong to $\mathbf{E}(U\times\R{\ell})$ (see %Remark~\ref{diverse} below), we considered further here the classes of functions (E) and (L) described in the Introduction. \red{include definitions of classes E and L.}
The following two results will be useful in Section~\ref{rto0}. 
Notice that if %\red{$\Phi\in \mathbf{E}(U\times\R{\ell})$} 
$\Phi$ belongs to the class (E) then $\Phi^\infty$ is a limit, namely
\begin{equation}\label{defrecession}
\Phi^\infty(x,\xi)= \lim_{\substack{x'\to x \\ \xi' \to \xi\\ t\to +\infty}}\frac{\Phi(x',t \xi')}{t}
\end{equation}
for every $x\in\overline U$, $\xi\in\S{\ell-1}$ and extended to $\R{\ell}$ by positive $1$-homogeneity. We point out that (i) continuous and positively $1$-homogeneous functions and (ii) convex functions with linear growth are two classes of functions belonging to 
%\red{$\mathbf{E}(U\times \R{\ell})$} 
(E) (see \cite{KR2}).
%We refer the reader to \cite[Section~2.4]{KR} for a detailed description of the class $\mathbf{E}(U\times\R{\ell})$.
We refer the reader to \cite[Section~2.4]{KR} for a detailed description of the class (E), where it is indicated by the symbol $\mathbf{E}(U\times\R{\ell})$.

%To this aim, let us consider $\gamma=g^a\cL^N+\gamma^s\in\cM(\overline U;\R{\ell})$ and define
%$$\overline\sI(\gamma)\coloneqq \int_{U} \Phi (x, g^a(x))\,\de x +\int_{\overline U} \Phi^\infty \bigg(x,\frac{\de \gamma^s}{\de |\gamma^s|}(x)\bigg)\,\de |\gamma^s|(x).$$
\begin{theorem}[{Reshetnyak upper-semicontinuity theorem, \cite[Corollary~2.11]{BCMS2013}}]\label{ReshetnyaktheoremBCMS}
Let $\mu_n,\mu
%\red{=m_n^a\cL^N+\mu_n^s}
\in\cM(U;\R{\ell})$ %\red{be a sequence of measures and let $\mu=m^a\cL^N+\mu^s\in\cM(U;\R{\ell})$} 
be such that $\mu_n$ $\langle\cdot\rangle$-strictly converges to $\mu$.
Let $\Phi\colon U\times\R{\ell}\to[0,+\infty)$ be a continuous function
%\begin{itemize}
%\item[(i)] (Reshetnyak continuity theorem, \cite[Theorem~4]{KR}) if $\Phi\in E(\Omega\times\R{\ell})$, then $\Phi^\infty$ is given by \eqref{defrecession} and 
%$$\sI(\mu_n ) \to \sI(\mu)\qquad \text{as $n\to\infty$};$$
%\item[(ii)] (Reshetnyak upper-semicontinuity theorem, \cite[Corollary~2.11]{BCMS2013}) if $\Phi$ has
with linear growth at infinity (see \eqref{S100}).
Then %$\Phi^\infty$ is given by \eqref{defrecession2} and 
the functional $\sI$ defined in \eqref{959} is upper semicontinuous, namely
$$\sI(\mu)\geq\limsup_{n\to\infty} \sI(\mu_n).$$
%\end{itemize}
\end{theorem}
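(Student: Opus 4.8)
The plan is to reduce the Reshetnyak upper-semicontinuity statement to the classical Reshetnyak continuity theorem for the perspective function, applied on the sphere-bundle. First I would rewrite $\sI(\mu)$ as a single integral against the measure $\langle\mu\rangle$. Recall that $\langle\mu\rangle = \sqrt{1+|m^a|^2}\,\cL^N + |\mu^s|$, and that by the polar decomposition we may write $\mu = \sigma_\mu\,|\mu|$ with $|\sigma_\mu|=1$ $|\mu|$-a.e., and more to the point we can encode the pair (density of $\mu$ w.r.t.\ $\langle\mu\rangle$, density of $\cL^N$ w.r.t.\ $\langle\mu\rangle$) as a unit vector field: on the absolutely continuous part this is $\bigl(m^a,1\bigr)/\sqrt{1+|m^a|^2}$ and on the singular part it is $\bigl(\tfrac{\de\mu^s}{\de|\mu^s|},0\bigr)$. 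Using positive $1$-homogeneity of $\Phi^\infty$ and the definition of the recession function, one checks the identity
\begin{equation*}
\sI(\mu) = \int_U \Phi^\#\!\left(x,\frac{\de(\mu,\cL^N)}{\de\langle\mu\rangle}(x)\right)\de\langle\mu\rangle(x),
\end{equation*}
where $\Phi^\#\colon U\times(\R{\ell}\times\R{})\to[0,+\infty)$ is defined by $\Phi^\#(x,(\xi,s)) \coloneqq s\,\Phi(x,\xi/s)$ for $s>0$ and $\Phi^\#(x,(\xi,0)) \coloneqq \Phi^\infty(x,\xi)$ — the natural $1$-homogeneous extension of $\Phi$. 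The point of this reformulation is that $\langle\cdot\rangle$-strict convergence $\mu_n\to\mu$ is exactly designed to give: (a) the vector measures $(\mu_n,\cL^N)$ converge weakly-$*$ to $(\mu,\cL^N)$ (the $\cL^N$ slot is constant), and (b) the masses converge, $\langle\mu_n\rangle(U)\to\langle\mu\rangle(U)$, which combined with (a) yields strict convergence of the augmented measures.

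Next I would invoke the classical Reshetnyak theorems for the function $\Phi^\#$. The subtlety — and the reason we get \emph{upper} rather than \emph{two-sided} semicontinuity — is that $\Phi$ is merely continuous with linear growth, not continuous together with its recession function in the strong sense required for Reshetnyak continuity. The function $\Phi^\#$ built above is the upper semicontinuous $1$-homogeneous envelope: by construction $\Phi^\#(x,(\xi,0))=\Phi^\infty(x,\xi)=\limsup_{x'\to x,\xi'\to\xi,t\to\infty}\Phi(x',t\xi')/t$, so $\Phi^\#$ is upper semicontinuous on $U\times(\R{\ell}\times\R{})$ (jointly), positively $1$-homogeneous in the second pair of variables, and bounded on $U\times\S{\ell}$ by the linear growth bound. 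For such a function the appropriate statement — which is precisely \cite[Corollary~2.11]{BCMS2013} cited here, or can be derived from the Reshetnyak continuity theorem by approximating $\Phi^\#$ from above by continuous $1$-homogeneous functions and passing to the infimum — gives
\begin{equation*}
\limsup_{n\to\infty}\int_U \Phi^\#\!\left(x,\frac{\de(\mu_n,\cL^N)}{\de\langle\mu_n\rangle}\right)\de\langle\mu_n\rangle \leq \int_U \Phi^\#\!\left(x,\frac{\de(\mu,\cL^N)}{\de\langle\mu\rangle}\right)\de\langle\mu\rangle,
\end{equation*}
which is the desired inequality $\limsup_n\sI(\mu_n)\leq\sI(\mu)$.

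The approximation argument that I expect to be the main obstacle is making step two rigorous when $\Phi$ is only continuous: one writes $\Phi^\# = \inf_k \Phi_k$ with $\Phi_k\colon \overline U\times(\R{\ell}\times\R{})\to[0,+\infty)$ continuous, positively $1$-homogeneous in the second variable, and decreasing; for each fixed $k$, Reshetnyak's continuity theorem (which requires continuity and $1$-homogeneity of the integrand and $\langle\cdot\rangle$-strict — equivalently, here, ordinary strict — convergence of the augmented measures) gives $\int_U\Phi_k(x,\cdot)\,\de\langle\mu_n\rangle\to\int_U\Phi_k(x,\cdot)\,\de\langle\mu\rangle$. Then $\limsup_n\sI(\mu_n)\le\limsup_n\int_U\Phi_k\,\de\langle\mu_n\rangle=\int_U\Phi_k\,\de\langle\mu\rangle$ for every $k$, and letting $k\to\infty$ with monotone convergence yields $\int_U\Phi^\#\,\de\langle\mu\rangle=\sI(\mu)$. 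The existence of such a decreasing continuous $1$-homogeneous approximating sequence for an upper semicontinuous $1$-homogeneous function on a compact base — together with checking that the $\langle\cdot\rangle$-strict convergence hypothesis transfers to strict convergence of the measures $(\mu_n,\cL^N)$ on $\overline U$ (using $|\mu|(\partial U)=0$ if we need the closed-set version, or working on the open set $U$ throughout) — is the one genuinely technical point; everything else is bookkeeping with the polar decomposition and the definition \eqref{defrecession2}. Since the statement is quoted directly from \cite[Corollary~2.11]{BCMS2013}, in the paper one may simply cite it, but the sketch above indicates how it is obtained.
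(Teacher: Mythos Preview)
The paper does not prove this theorem: it is stated as a quotation from \cite[Corollary~2.11]{BCMS2013} and no argument is given beyond the citation. So there is no ``paper's own proof'' to compare against; your task here would simply be to cite the reference, as the authors do.

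That said, your sketch is essentially the standard route by which this result is established in \cite{BCMS2013} (and, in the autonomous case, in \cite{KR,KR2}). The key idea --- lifting to the graph measure $(\mu,\cL^N)$ so that $\langle\cdot\rangle$-strict convergence becomes ordinary strict convergence of the augmented measures, rewriting $\sI$ as the integral of the positively $1$-homogeneous perspective $\Phi^\#$ against $\langle\mu\rangle$, and then approximating the upper semicontinuous $\Phi^\#$ from above by continuous $1$-homogeneous integrands to which Reshetnyak's continuity theorem applies --- is correct. The one point you should be careful with is the approximation step: you need the decreasing sequence $\Phi_k$ to be continuous on $U\times\S{\ell}$ (not $\overline U$, which need not be compact and on which $\Phi$ need not extend), and you need a version of Reshetnyak continuity on the open set $U$ for continuous $1$-homogeneous integrands under strict convergence; both are available, but the former relies on the general fact that a bounded upper semicontinuous function on a metric space is a decreasing limit of continuous functions, and the latter is, e.g., \cite[Theorem~2.39]{AFP}. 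With those ingredients in hand, your argument goes through.
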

\begin{theorem}[{Reshetnyak continuity theorem, \cite[Theorem~4]{KR}}]\label{ReshetnyaktheoremKR}
Let $\mu_n, \mu
%\red{=m_n^a\cL^N+\mu_n^s}
\in\cM(\overline U;\R{\ell})$ %\red{be a sequence of measures and let $\mu=m^a\cL^N+\mu^s\in\cM(\overline U;\R{\ell})$} 
be such that $\mu_n$ $\langle\cdot\rangle$-strictly converges to $\mu$
and let $\Phi$ belong to the class (E). % \red{$\in \mathbf{E}(U\times\R{\ell})$}.
Then $\Phi^\infty$ is given by \eqref{defrecession} and 
%$\Phi\colon\Omega\times\R{\ell}\to[0,+\infty)$ be a continuous function.
%\begin{itemize}
%\item[(i)] (Reshetnyak continuity theorem, \cite[Theorem~4]{KR}) if $\Phi\in E(\Omega\times\R{\ell})$, then 
$$\overline\sI(\mu_n ) \to \overline\sI(\mu),\qquad \text{as $n\to\infty$},$$
where
$$\overline\sI(\mu)\coloneqq \int_{U} \Phi (x, m^a(x))\,\de x +\int_{\overline U} \Phi^\infty \bigg(x,\frac{\de \mu^s}{\de |\mu^s|}(x)\bigg)\,\de |\mu^s|(x),$$
and analogously for $\overline\sI(\mu_n)$.
%where, for $\gamma=g^a\cL^N+\gamma^s\in\cM(\overline U;\R{\ell})$,
%$\overline\sI(\gamma)\coloneqq \int_{U} \Phi (x, g^a(x))\,\de x +\int_{\overline U} \Phi^\infty \bigg(x,\frac{\de \gamma^s}{\de |\gamma^s|}(x)\bigg)\,\de |\gamma^s|(x).$
%\item[(ii)] (Reshetnyak upper-semicontinuity theorem, \cite[Corollary~2.11]{BCMS2013}) if $\Phi$ has linear growth at infinity (see \eqref{S100}), 
%then $\Phi^\infty$ is given by \eqref{defrecession2} and 
%$$\sI(\mu)\geq\limsup_{n\to\infty} \sI(\mu_n).$$
%\end{itemize}
\end{theorem}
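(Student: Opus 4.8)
The plan is to reduce the statement to the classical Reshetnyak continuity theorem for strictly convergent vector-valued measures on a sphere bundle, via a ``lifting'' trick that realizes the functional $\langle\cdot\rangle$ as an ordinary total variation.

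First I would associate to each $\mu = m^a\cL^N+\mu^s\in\cM(\overline U;\R{\ell})$ the lifted measure $\ti\mu\coloneqq(\cL^N\res\overline U,\mu)\in\cM(\overline U;\R{1+\ell})$, whose absolutely continuous density is $(1,m^a)$ and whose singular part is $(0,\mu^s)$. A short computation then gives $|\ti\mu|=\sqrt{1+|m^a|^2}\,\cL^N+|\mu^s|=\langle\mu\rangle$, while the polar function $\de\ti\mu/\de|\ti\mu|$ equals $(1,m^a)/\sqrt{1+|m^a|^2}$ on the absolutely continuous part and $(0,\de\mu^s/\de|\mu^s|)$ on the singular part; in particular its first component is nonnegative, so $\de\ti\mu/\de|\ti\mu|$ takes values in the closed upper hemisphere of $\S{\ell}$. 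Since the first component of $\ti\mu_n$ is the fixed measure $\cL^N\res\overline U$ for every $n$, the hypothesis that $\mu_n\to\mu$ $\langle\cdot\rangle$-strictly translates exactly into $\ti\mu_n\wsto\ti\mu$ in $\cM(\overline U;\R{1+\ell})$ together with $|\ti\mu_n|(\overline U)=\langle\mu_n\rangle(\overline U)\to\langle\mu\rangle(\overline U)=|\ti\mu|(\overline U)$, i.e.\ into strict convergence of $\ti\mu_n$ to $\ti\mu$ in the classical sense.

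Next I would introduce the integrand $f$ on the sphere bundle by setting $f(x,(a,b))\coloneqq a\,\Phi(x,b/a)$ for $a>0$ and $f(x,(0,b))\coloneqq\Phi^\infty(x,b)$. Linear growth of $\Phi$ (a consequence of $\Phi$ belonging to the class (E), cf.\ \eqref{S100}) gives $|f(x,(a,b))|\leq C(a+|b|)\leq\sqrt2\,C$ on $\S{\ell}$, so $f$ is bounded; and the defining property of the class (E) --- namely, that the quotient in \eqref{defrecession} converges (not merely in the sense of $\limsup$) --- is precisely what makes $f$ extend continuously up to the equator $\{a=0\}$, where it agrees with $\Phi^\infty$ by positive $1$-homogeneity. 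Extending $f$ to a continuous bounded function on all of $\overline U\times\S{\ell}$ (for instance by Tietze's theorem, the closed upper hemisphere being closed), one checks by unravelling the two cases that $a\,\Phi(x,b/a)\,\de|\ti\mu|=\Phi(x,m^a)\,\de x$ on the absolutely continuous part and equals $\Phi^\infty(x,\de\mu^s/\de|\mu^s|)\,\de|\mu^s|$ on the singular part, whence
\[
\overline\sI(\mu)=\int_{\overline U} f\Big(x,\tfrac{\de\ti\mu}{\de|\ti\mu|}(x)\Big)\,\de|\ti\mu|(x),
\]
and likewise for each $\mu_n$. Applying the classical Reshetnyak continuity theorem (see \cite{AFP} and \cite{KR}) to the strictly convergent sequence $\ti\mu_n\to\ti\mu$ and the continuous bounded integrand $f$ then yields $\overline\sI(\mu_n)\to\overline\sI(\mu)$, which is the claim.

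I expect the delicate point to be the continuity of $f$ across the equator $\{a=0\}$: this is exactly where the full strength of the class (E) is needed. Under the weaker assumption that the recession function is only a $\limsup$ --- as in Theorem~\ref{ReshetnyaktheoremBCMS} --- the lifted integrand $f$ would be merely upper semicontinuous there, and the same scheme would recover only upper semicontinuity of $\overline\sI$, not continuity. A minor additional technicality, when $U$ is unbounded and $\overline U$ noncompact, is that the classical Reshetnyak theorem must be invoked in a version accommodating finite measures on noncompact domains, as in \cite{KR}; this does not affect the structure of the argument.
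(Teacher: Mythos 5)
The paper does not prove this statement: it is quoted verbatim as a citation of \cite[Theorem~4]{KR}, so there is no ``paper proof'' to compare against line by line. Your argument is, however, essentially the standard proof of the cited result, and it is correct. The lifting $\mu\mapsto\ti\mu=(\cL^N\res\overline U,\mu)$ does satisfy $|\ti\mu|=\langle\mu\rangle$, it converts $\langle\cdot\rangle$-strict convergence into ordinary strict convergence of the lifted measures, and the perspective integrand $f(x,(a,b))=a\,\Phi(x,b/a)$, $f(x,(0,b))=\Phi^\infty(x,b)$ reproduces $\overline\sI$ after integration against $|\ti\mu|$ (using $\cL^N(\partial U)=0$ to identify $\int_U$ with $\int_{\overline U}$ in the absolutely continuous term). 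You also correctly isolate the crux: continuity of $f$ up to the equator $\{a=0\}$ of the upper hemisphere is exactly the joint-limit property \eqref{S101} defining the class (E), and it is precisely what fails for class (L), where one only retains the upper-semicontinuity statement of Theorem~\ref{ReshetnyaktheoremBCMS}. Two small points worth making explicit if you write this up in full: (i) the polar $\de\ti\mu_n/\de|\ti\mu_n|$ lands in the \emph{closed} upper hemisphere because the first component of $\ti\mu_n$ is the nonnegative measure $\cL^N\res\overline U$, which is what licenses the Tietze extension off that hemisphere; and (ii) the classical Reshetnyak continuity theorem you invoke is usually stated on open sets, so on $\overline U$ one should either view the $\ti\mu_n$ as measures on a larger open set supported in $\overline U$ or use the version for finite measures on a locally compact space, as you note. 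Neither point is a gap, only bookkeeping.
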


We conclude this subsection by proving the following property for functions $\Phi$ belonging to the class (L).
\begin{lemma}\label{S105}
Let $\Phi\colon U\times\R\ell\to[0,+\infty)$ belong to the class (L).
Then the recession function $\Phi^\infty$ defined in \eqref{defrecession2} can be computed as
\begin{equation}\label{defrecession3}
\Phi^\infty(x,\xi)=\limsup_{t\to+\infty} \frac{\Phi(x,t\xi)}{t},\qquad\text{for all $x\in U$, $\xi\in\R{\ell}$.}
\end{equation}
\end{lemma}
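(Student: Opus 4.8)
The goal is to show that for $\Phi$ in class (L), the $\limsup$ over the triple $(x',\xi',t)$ in \eqref{defrecession2} reduces to the one-variable $\limsup$ over $t$ alone with $x,\xi$ fixed. I would first dispose of the easy inequality: since the limit superior in \eqref{defrecession2} is taken over a larger family of approaching sequences (allowing $x'\to x$ and $\xi'\to\xi$, not just the constant choices $x'=x$, $\xi'=\xi$), we trivially have
\begin{equation*}
\Phi^\infty(x,\xi)\geq\limsup_{t\to+\infty}\frac{\Phi(x,t\xi)}{t}.
\end{equation*}
Strictly speaking one should note $\xi\in\S{\ell-1}$ first and then extend by $1$-homogeneity; for $\xi=0$ both sides are $0$, and for general $\xi\neq0$ homogeneity in the second argument lets me rescale, so it suffices to treat $|\xi|=1$.

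For the reverse inequality, the plan is to use the two estimates defining class (L) to compare $\Phi(x',t\xi')/t$ with $\Phi(x,t\xi)/t$ along any sequence $x'\to x$, $\xi'\to\xi$, $t\to+\infty$. Using \eqref{S103} with the pair $(x',t\xi')$, $(x,t\xi')$ gives
\begin{equation*}
|\Phi(x',t\xi')-\Phi(x,t\xi')|\leq\omega(|x'-x|)(1+t|\xi'|),
\end{equation*}
and using the Lipschitz bound \eqref{S102} in the second variable gives
\begin{equation*}
|\Phi(x,t\xi')-\Phi(x,t\xi)|\leq L_\Psi\, t\,|\xi'-\xi|.
\end{equation*}
Dividing by $t$ and combining,
\begin{equation*}
\left|\frac{\Phi(x',t\xi')}{t}-\frac{\Phi(x,t\xi)}{t}\right|\leq\omega(|x'-x|)\Big(\tfrac1t+|\xi'|\Big)+L_\Psi|\xi'-\xi|.
\end{equation*}
As $x'\to x$, $\xi'\to\xi$, $t\to+\infty$, the right-hand side tends to $0$ (here $|\xi'|$ stays bounded, $\omega(|x'-x|)\to0$ by continuity of $\omega$ at $0$, and $|\xi'-\xi|\to0$). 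Hence the two quotients have the same $\limsup$, which yields $\Phi^\infty(x,\xi)\leq\limsup_{t\to+\infty}\Phi(x,t\xi)/t$, and then by $1$-homogeneity the identity \eqref{defrecession3} extends from $|\xi|=1$ to all $\xi\in\R\ell$.

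The only mildly delicate point — and the one I would be careful to state precisely — is the interchange of the $\limsup$ in three variables with the estimate above: what the inequality shows is that for \emph{every} admissible approaching triple the difference of quotients vanishes, so the two $\limsup$'s (one over all triples, one over $t$ alone) coincide. This is a routine $\varepsilon$-argument rather than a genuine obstacle; no compactness or Reshetnyak-type machinery is needed here, only the definitions of class (L). I would keep the write-up to a few lines.
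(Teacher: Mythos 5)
Your proposal is correct and is exactly the computation the paper has in mind: the paper's own proof merely states that the easy inequality is obvious and that the converse ``is a matter of a computation, using the subadditivity of the $\limsup$'' together with \eqref{S102} and \eqref{S103}, which is precisely the triangle-inequality estimate through the intermediate point $(x,t\xi')$ that you write out. Your additional remark about reducing to $|\xi|=1$ via positive $1$-homogeneity (and the trivial case $\xi=0$) is a correct and harmless refinement of the same argument.
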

\begin{proof}
Fix $x\in U$ and $\xi\in\R{\ell}$.
The inequality $\Phi^\infty(x,\xi)\geq \limsup_{t\to+\infty} t^{-1}\Phi(x,t\xi)$ is obvious from the definition of $\limsup$.
The proof of the converse inequality is a matter of a computation, using the subadditivity of the $\limsup$ and keeping \eqref{S102} and \eqref{S103} in mind.
\end{proof}

\section{Theorems on upscaling and localization of non-local energies}\label{section:mathform}
In this section we introduce a higher-dimensional analogue of the non-local functional \eqref{non-local term}. 
In what follows, $\Omega\subset\R{N}$ denotes a bounded, connected, open set with Lipschitz boundary and we consider deformations of the body $\Omega$ taking values in $\R{d}$. 
Therefore the deformation gradient is a $\R{d\times N}$-valued field defined on $\Omega$. As it is customary when assigning an energy to a structured deformation, we start from an initial energy defined for a classical deformation $u\colon\Omega\to\R{d}$ of the body and define the energy of a structured deformation $(g,G)$ as the energetically most economical way to approximate $(g,G)$ by means of classical deformations $u_n$ which converge to $(g,G)$ according to \eqref{appCF}.
The theory that we are going to introduce here and develop in Section~\ref{section:new3} is for a non-local energy inspired by \eqref{non-local term}, whereas in Section~\ref{together} we pair the non-local functional with a local one of the type treated in \cite{CF1997} and present results on the upscaling and spatial localization of an energy featuring both local and non-local terms. 
It is for this coupling result that we are going to need the sequences $u_n$ to converge to $(g,G)$ in the sense of \eqref{appCF} and also satisfy \eqref{centerline}. We are going to describe this process in detail.

The main issues that Choksi and Fonseca \cite{CF1997} addressed were the assignment of an energy to a structured deformation and the establishment of an integral representation for that energy.
They took an initial energy $E_L\colon SBV(\Omega;\R{d})\to[0,+\infty)$ featuring a bulk energy density $W\colon \R{d\times N}\to[0,+\infty)$ and an interfacial energy density $\psi\colon\R{d}\times\S{N-1}\to[0,+\infty)$ in the form
\begin{equation}\label{1002}
E_L(u)\coloneqq \int_\Omega W(\nabla u(x))\,\de x+\int_{\Omega\cap S_u} \psi([u](x),\nu_u(x))\,\de\cH^{N-1}(x),
\end{equation}
where $\de x$ and $\de \cH^{N-1}(x)$ denote integration with respect to the $N$-dimensional Lebesgue measure $\cL^N$ and the $(N-1)$-dimensional Hausdorff measures, respectively, $S_u$ is the jump set of $u$, $[u](x)$ is the jump of $u$ at $x\in S_u$, and $\nu_u(x)$ is the outer unit normal at $x\in S_u$.
Because of the non-uniqueness of the approximating sequence $u_n$ in Proposition~\ref{appTHM}, the energy $I_L(g,G)$ for a given structured deformation is defined as the most economical way, in terms of energies $E_L(u_n)$ in \eqref{1002}, to reach $(g,G)$.
From the mathematical point of view, this corresponds to a relaxation/upscaling procedure, namely
\begin{equation}\label{1003}
\begin{split}
I_L(g,G)\coloneqq \inf_{\{u_n\}\subset SBV(\Omega;\R{d})}\Big\{\liminf_{n\to\infty} E_L(u_n): &\; \text{$u_n$ converges to $(g,G)$ as in \eqref{appCF}} \\
&\; \text{and $\sup_n %\big(
\|\nabla u_n\|_{L^p(\Omega;\R{d\times N})} %+\|u_n\|_{BV(\Omega;\R{d})}\big)
<\infty$}\Big\}
\end{split}
\end{equation}
where $p\geq1$.
The representation theorems \cite[Theorems~2.16 and~2.17]{CF1997} state that, under suitable hypotheses on $W$, $\psi$, and $G$ depending upon $p$, there exist a relaxed/upscaled bulk energy density $H\colon \R{d\times N}\times \R{d\times N}\to[0,+\infty)$ and a  relaxed/upscaled interfacial energy density $h\colon\R{d}\times\S{N-1}\to[0,+\infty)$ such that 
\begin{equation}\label{1004}
I_L(g,G)=\int_\Omega H(\nabla g(x),G(x))\,\de x+\int_{\Omega\cap S_g} h([g](x),\nu_g(x))\,\de\cH^{N-1}(x).
\end{equation}
We refer the reader to Section~\ref{ChoksiFonseca} for the integral representation theorem providing \eqref{1004}; the particular hypotheses on $W$, $\psi$, and $G$ that depend upon $p$ will not play a role until then.
As a matter of fact, we will present a more general version where we allow the initial bulk and surface energy densities $W$ and $\psi$ to depend on the space variable $x$.
The one-dimensional procedure of \cite{DPO2000} inspired by that in \cite{CF1997} was carried out for the notion of structured deformations introduced in \cite{DPO1993}; there the initial energy \eqref{1002} had the form
\begin{equation}\label{1007}
E_L(u)=\int_0^1 W(\nabla u(x))\,\de x+\sum_{z\in S_u} \psi([u](z))
\end{equation}
and the resulting integral representation \eqref{1004} was shown to be
\begin{equation*}%\label{1005}
I_L(g,G)=\int_0^1 \big(W(G(x))+\zeta(\nabla g(x)-G(x))\big)\,\de x+\sum_{z\in S_g} \psi([g](z)),
\end{equation*}
where $\zeta\coloneqq \liminf_{\zeta\to0^+} \psi(\zeta)/\zeta$.
In this example, the contribution to the relaxed bulk energy density $H$ of the initial interfacial energy density $\psi$ has a special character: as the definition of $\zeta$ shows, only arbitrarily small jumps influence the relaxed bulk response, which, in turn, is linear in the disarrangement tensor $M$.
%\gre{This result seems unduly restrictive in light of the descriptive power in the context of structured deformations of non-linear dependence of the bulk energy on $M=\nabla g-G$ as illustrated} 
In \cite{CDPFO1999} and subsequently in \cite{DO2000,DO2002} a periodic dependence upon $M$ was shown to account for yielding, hysteresis, and hardening in single crystals undergoing two-level shears.
Therefore, to include such significant non-linear effects, the choice \eqref{1007} of initial energy must be modified.
%\note{This part about the one-dimensional example is already in the introduction. Repeat it here?}
The proposal in \cite{DPO2000} toward capturing a non-linear dependence on $M$ was to modify the initial energy \eqref{1007} by adding the non-local term \eqref{non-local term} to form the initial energy
%\red{as follows: for each $r\in(0,1)$ let}
\begin{equation}\label{1006}
F^r(u)\coloneqq\int_0^1 W(\nabla u(x))\,\de x+\sum_{z\in S_u} \psi([u](z))+\int_0^{1} \Psi \bigg(\sum_{z\in S_u\cap (x-r,x+r)} \frac{[u](z)}{2r}\bigg)\de x,
\end{equation}
where the added, non-local term includes the bounded and uniformly continuous bulk energy density $\Psi\colon [0,+\infty)\to[0,+\infty)$ accounting for the average of the jumps within each interval of radius $r$. 
%Relaxing the energy $F^r$ 
Passing to structured deformations in \eqref{1006} %to structured deformations 
and then taking the limit as $r\to0^+$ yields (see \cite[Proposition~2.3 and (2.21)]{DPO2000})
\begin{equation}\label{1008}
J(g,G) =\int_0^1 \big(W(G(x))+\zeta(\nabla g(x)-G(x))\big)\,\de x+\sum_{z\in S_g} \psi([g](z))+ \int_0^1 \Psi (\nabla g(x)-G(x))\de x,
\end{equation}
where a second, possibly non-linear, dependence on the disarrangements appears through the density $\Psi$ in the last integral above.

\smallskip

A principal goal of this paper is to show that an analogous procedure that achieves in one dimension the energy in \eqref{1008} can be carried out in higher dimensions in the $SBV$ framework of \cite{CF1997}, by adding to the energy $E_L$ in \eqref{1002} a term similar to the last term on the right-hand side of \eqref{1006}.
%\red{Let $\Omega\subset\R{N}$ be a bounded connected open set with Lipschitz boundary $\partial\Omega$;} 
For a continuous function $\Psi\colon \Omega\times\R{d\times N}\to[0,+\infty)$ in (E) or (L), set $\Omega_r\coloneqq\{x\in\Omega:\dist{x}{\partial\Omega}>r\}$, and for $u\in SBV(\Omega;\R{d})$, define the averaged interfacial energy
\begin{equation}\label{1009}
E^{\alpha_r}(u)\coloneqq \int_{\Omega_r} \Psi\big(x,(D^su*\alpha_r)(x)\big)\,\de x.
\end{equation}
Notice that we have introduced an explicit dependence on $x$ in the non-local energy density $\Psi$ in \eqref{1009}.
The need for such a dependence is motivated by  applications to yielding, hysteresis, and crystal plasticity that we will discuss in Sections~\ref{sect7} and~\ref{plasticity}.
Putting \eqref{convalpha} and \eqref{1009} together and using the structure theorem for the derivative of $SBV$ functions (see formula \eqref{810}), 
yields the following form for the averaged interfacial energy $E^{\alpha_r}$
\begin{equation}\label{1011}
E^{\alpha_r}(u)= \int_{\Omega_r} \Psi\bigg(x,\int_{B_r(x)\cap S_u} \alpha_r(x-y)[u](y)\otimes\nu_u(y)\,\de\cH^{N-1}(y)\bigg)\de x.
\end{equation}
%with $B_r(x)=x+B_r$ the ball of radius $r$ centered at $x$.
We note that in the expression above the non-local character of the averaged interfacial energy emerges through the appearance of two iterated integrations, the inner surface integral with respect to $\cH^{N-1}$ and the outer volume integral with respect to $\cL^N$.
%\gre{
%\begin{remark}\label{perso}
%The definition of the energy $E^{\alpha_r}$ in \eqref{1009} as the integral on $\Omega_r$ is motivated by the fact that the convolution operation is well defined on the sum of the supports. Noting that $\Omega_r+B_r=\Omega$, the energy $E^{\alpha_r}$ is well defined according to \cite[page~41]{AFP}.
%In principle, it can be convenient to define the non-local energy as an integral over all of $\Omega$ (thus having the dependence on the parameter $r$ only in the integrand function).
%To this aim, we define
%\begin{equation}\label{1200}
%\widetilde E^{\alpha_r} (u)\coloneqq \int_\Omega  \Psi\big(x,(D^su*\alpha_r)(x)\big)\,\de x,
%\end{equation}
%which, in the spirit of \eqref{1011} can be written as
%\begin{equation}\label{1201}
%\widetilde E^{\alpha_r}(u)= \int_{\Omega} \Psi\bigg(x,\int_{B_r(x)\cap S_u} \alpha_r(x-y)[u](y)\otimes\nu_u(y)\,\de\cH^{N-1}(y)\bigg)\de x,
%\end{equation}
%with the understanding that %either (i) 
%the points $x\in\Omega$ such that $\dist{x}{\partial\Omega}<r$ contribute with a different weight to the averaging process. % or (ii) we have extended $u$ outside $\Omega$, say in $\Omega+B_1$.
%As we shall see in Lemma~\ref{1106}, the difference between the relaxed energies given by $E^{\alpha_r}$ and $\widetilde E^{\alpha_r}$ respectively is lost in the limit as $r\to0^+$.
%\end{remark}
%}
Our main aim in this paper is to study the behavior of energy \eqref{1011} under upscaling, i.e., as the field $u$ approaches a target structured deformation $(g,G)$, followed by spatial localization, i.e., as $r$ approaches $0$.
The first contribution we obtain is the following 
result concerning the upscaling of the initial energy $E^{\alpha_r}(u_n)$.
%\red{ with respect to the convergence \eqref{1000} that here embodies the process of upscaling from submacroscopic to macroscopic levels.}
%of $\cI^{\alpha_r}(g,G)$ in \eqref{1012}.
%In item (i) of the following theorem, $C_0(B_1)=\{\alpha\in C_b(B_1): \alpha|_{\partial B_1}=0\}$ (see Section~\ref{notation} for the precise definition). 
\begin{theorem}\label{mainrho}
Let $\Omega\subset\R{N}$ be a bounded Lipschitz domain, $\Psi\colon \Omega\times\R{d\times N} \to [0,+\infty)$ be a continuous function, %(\red{both in $(\Psi)_E$ or in $(\Psi)_b$}), 
for $r>0$, let $\alpha_r$ be as in \eqref{1014}, and let $E^{\alpha_r}$ be as in \eqref{1011}.
%with $\rho_r\colon B(0; r) \to [0, +\infty[$ continuous and bounded and satisfies $ \int_{B(x; r)} \rho_r(z) \;\de z =1.$
Then for every $(g,G)\in SD(\Omega;\R{d}\times\R{d\times N})$ % \red{and for a.e. $r > 0$}, 
and for every admissible sequence $u_n\in\Ad(g,G)$ (see \eqref{Addef})%$\{u_n\}\subseteq SBV(\Omega;\R{d})$, such that
%\begin{equation}\label{1000}
%u_n\sSD (g,G),
%\end{equation}
%(where the latter convergence means that \eqref{appCF} holds and 
%\begin{equation}\label{centerline}
%D^s u_n\wsto (\nabla g-G)\cL^N+D^s g\qquad \text{in $\cM(\Omega;\R{d\times N})$,)}
%\end{equation}
%$u_n\in\Ad(g,G):\{ \hbox{ such that }u_n \hbox{ converges to }(g,G) \hbox{ according to \eqref{appCF} and \eqref{centerline}} \}$} %\red{$u_n\in SBV(\Omega;\R{d})$ such that \red{$u_n\sSD(g,G)$}},
\begin{equation}\label{864}
\begin{split}
\lim_{n\to\infty} E^{\alpha_r}(u_n)= &\; \int_{\Omega_r} \Psi\bigg(x, \int_{B_r(x)}  \alpha_r(y-x)(\nabla g-G)(y)\,\de y  \\
& \phantom{ \int_{\Omega_r} \Psi\bigg(x,}
+ \int_{B_r(x)\cap S_g}  \alpha_r(y-x)[g](y)\otimes \nu_g(y) \, \de \cH^{N-1}(y)\bigg)\de x \\ %\eqqcolon I^{\alpha_r}(g,G).
 \eqqcolon & \; I^{\alpha_r}(g,G;\Omega_r).
\end{split}
\end{equation}
%\begin{itemize}
%\item[(i)] if $\cE^{\alpha_r}=E^{\alpha_r}$ as in \eqref{1009} and $\alpha\in C_0(B_1)$, then the relaxed energy $\cI^{\alpha_r}(g,G)$ in \eqref{1012}, taking $\rightsquigarrow$ as $\wSD$, has the following integral representation
%\begin{equation}\label{1015}
%\cI^{\alpha_r}(g,G)=I^{\alpha_r}(g,G)= \int_{\Omega_r} \Psi\big(x,((\nabla g-G)\cL^N*\alpha_r)(x)+(D^s g*\alpha_r)(x)\big)\,\de x,
%\end{equation}
%namely, in the spirit of \eqref{1011},
%\begin{equation}\label{864}
%\begin{split}
%I^{\alpha_r}(g,G)= \int_{\Omega_r} \Psi\bigg(&x, \int_{B_r(x)}  \alpha_r(y-x)(\nabla g-G)(y)\,\de y  \\
%& 
%+ \int_{B_r(x)\cap S_g}  \alpha_r(y-x)[g](y)\otimes \nu_g(y) \, \de \cH^{N-1}(y)\bigg)\de x;
%\end{split}
%\end{equation}
%\item[(ii)]  if $\cE^{\alpha_r}=\widetilde E^{\alpha_r}$ as in \eqref{1200}, then the relaxed energy $\cI^{\alpha_r}(g,G)$ in \eqref{1012}, taking $\rightsquigarrow$ as $\sSD$, has the following integral representation
%\begin{equation}\label{1202}
%\cI^{\alpha_r}(g,G)=\widetilde I^{\alpha_r}(g,G)= \int_{\Omega} \Psi\big(x,((\nabla g-G)\cL^N*\alpha_r)(x)+(D^s g*\alpha_r)(x)\big)\,\de x,
%\end{equation}
%namely, in the spirit of \eqref{1201},
%\begin{equation}\label{1203}
%\begin{split}
%\widetilde I^{\alpha_r}(g,G)= \int_{\Omega}  \Psi\bigg( & x,\int_{B_r(x)} \!\!\! \alpha_r(y-x)(\nabla g-G)(y)\,\de y  \\
%& 
%+ \int_{B_r(x)\cap S_g}  \alpha_r(y-x)[g](y)\otimes \nu_g(y) \, \de \cH^{N-1}(y)\bigg)\de x.
%\end{split}
%\end{equation}
%\end{itemize}
\end{theorem}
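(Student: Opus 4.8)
The plan is to show that, for a fixed horizon $r>0$, the averaged interfacial energy $E^{\alpha_r}(u_n)$ in \eqref{1011} passes to the limit as $n\to\infty$ and produces the claimed expression in \eqref{864}. The key observation is that $E^{\alpha_r}(u_n) = \int_{\Omega_r} \Psi\big(x, (D^s u_n * \alpha_r)(x)\big)\,\de x$, so the whole question reduces to understanding how the convolutions $(D^s u_n * \alpha_r)$ behave on $\Omega_r$ as $n\to\infty$. Since $u_n\in\Ad(g,G)$, Proposition~\ref{appTHM} (specifically \eqref{centerline}) tells us that $D^s u_n \wsto (\nabla g-G)\cL^N + D^s g$ in $\cM(\Omega;\R{d\times N})$. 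First I would invoke the smoothing property \eqref{cuconv}: because $\alpha_r$ (or more precisely $\alpha_r(\punto-x)$ for fixed $x\in\Omega_r$) is in $C_c^\infty(\R N)$ with support in $B_r(x)\subset\Omega$, local weak-* convergence of $D^s u_n$ gives
\[
(D^s u_n * \alpha_r)(x) \longrightarrow \big((\nabla g-G)\cL^N + D^s g\big)*\alpha_r\,(x)
\]
for every $x\in\Omega_r$, and in fact uniformly on compact subsets of $\Omega_r$. Writing out this limiting convolution explicitly, using $D^s g = [g]\otimes\nu_g\,\cH^{N-1}\res S_g$ (since $g\in SBV$, see \eqref{810}) and \eqref{convalpha}, yields exactly the argument of $\Psi$ appearing on the right-hand side of \eqref{864}: the volume term $\int_{B_r(x)}\alpha_r(y-x)(\nabla g-G)(y)\,\de y$ plus the surface term $\int_{B_r(x)\cap S_g}\alpha_r(y-x)[g](y)\otimes\nu_g(y)\,\de\cH^{N-1}(y)$. (One must be slightly careful with the sign/orientation of the argument $x-y$ versus $y-x$ in $\alpha_r$; since $\alpha$ need not be even, the statement of \eqref{864} has $\alpha_r(y-x)$, and this is just a change of variables in \eqref{convalpha} that I would track but not belabor.)

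Next, having pointwise (indeed locally uniform) convergence of the arguments, I would pass the limit through $\Psi$ and through the outer integral over $\Omega_r$. Since $\Psi$ is continuous, $\Psi\big(x, (D^s u_n*\alpha_r)(x)\big) \to \Psi\big(x, (\,\cdot\,)*\alpha_r(x)\big)$ pointwise on $\Omega_r$. To interchange limit and integral I need a domination or equi-integrability argument: this is where the uniform bound $|D u_n|(\Omega)\le C\|(g,G)\|_{SD}$ from \eqref{appEST} enters. By \eqref{lT816} in Theorem~\ref{815} (applied with the roles adapted, or directly by Young-type estimates for convolution with an $L^\infty$ kernel), $\|D^s u_n*\alpha_r\|_{L^\infty(\Omega_r)} \le \|\alpha_r\|_{L^\infty}|D^s u_n|(\Omega) \le C_r$, a bound uniform in $n$. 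Hence the arguments of $\Psi$ lie in a fixed bounded set, and since $\Psi$ is continuous it is bounded on $\overline{\Omega_r}$ times that bounded set; consequently the integrands are uniformly bounded on $\Omega_r$, which has finite measure, and dominated convergence applies. This gives $\lim_n E^{\alpha_r}(u_n) = \int_{\Omega_r}\Psi(x, \text{limit argument})\,\de x = I^{\alpha_r}(g,G;\Omega_r)$, as desired. Note that at this stage we do not need the growth hypotheses (E) or (L) on $\Psi$ — mere continuity suffices, exactly as stated in the theorem — because the horizon $r$ being fixed provides the uniform $L^\infty$ control on the convolved measures; the growth conditions will only become relevant later when sending $r\to0$.

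The main obstacle I anticipate is the uniform-in-$n$ control needed to justify the passage to the limit inside the outer integral, together with the fact that the convergence $(D^s u_n*\alpha_r)(x)\to\cdots$ is only guaranteed on compact subsets of $\Omega_r$, whereas we integrate over the open set $\Omega_r$ itself. The clean way around this is precisely the uniform $L^\infty$ bound on $D^su_n*\alpha_r$ over all of $\Omega_r$ coming from \eqref{appEST} and $\|\alpha_r\|_\infty<\infty$: this simultaneously upgrades the locally-uniform convergence to a dominated-convergence situation and removes any delicacy about the boundary of $\Omega_r$. A secondary point requiring a line of care is that \eqref{cuconv} is stated for a fixed test kernel in $C_c^\infty(\R N)$, while here the kernel $y\mapsto\alpha_r(y-x)$ moves with $x$; but for each fixed $x\in\Omega_r$ the convergence of $(D^su_n*\alpha_r)(x)$ is immediate from $D^su_n\wsto(\nabla g-G)\cL^N+D^sg$ applied to that one test function, and the locally uniform version (which we do not even strictly need, given the domination) follows from the standard argument in \cite[page~41]{AFP}. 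Everything else is bookkeeping: expanding the convolution of the limit measure into its absolutely continuous and jump parts and matching it term-by-term with the right-hand side of \eqref{864}.
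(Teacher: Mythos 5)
Your proposal is correct and follows essentially the same route as the paper's proof: weak-* convergence $D^s u_n\wsto(\nabla g-G)\cL^N+D^s g$ from \eqref{centerline} gives pointwise convergence of the convolutions $(D^s u_n*\alpha_r)(x)$ on $\Omega_r$ via \eqref{cuconv}, the uniform bound \eqref{appEST} together with $\|\alpha_r\|_\infty<\infty$ yields a uniform $L^\infty$ bound on the convolved measures, and continuity of $\Psi$ plus dominated convergence concludes. The paper's argument is the same, including the observation that no growth condition on $\Psi$ is needed at this fixed-$r$ stage.
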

After proving Theorem~\ref{mainrho} %and obtaining an integral representation for $\cI^{\alpha_r}$, 
we deduce an explicit formula for %its limit
\begin{equation}\label{1013}
I(g,G)\coloneqq \lim_{r\to0^+} \lim_{n\to\infty} E^{\alpha_r}(u_n)=\lim_{r\to0^+}I^{\alpha_r}(g,G;\Omega_r),
\end{equation}
where $I(g,G)$ represents the spatial localization of the upscaled energy $I^{\alpha_r}(g,G;\Omega_r)$.
%To introduce our main result concerning the (iterated) limit above, we denote by $I^{\alpha_r}(g,G;\Omega_r)$ the right-hand side of \eqref{864}, so that \eqref{1013} becomes $I(g,G)=\lim_{r\to0^+} I^{\alpha_r}(g,G;\Omega_r)$.
In our main result, we obtain an explicit formula for $I(g,G)$ via an extension of $(g,G)\in SD(\Omega;\R{d}\times\R{d\times N})$ to $(\bar g,\bar G)\in BV(\R{N};\R{d})\times L^1(\R{N};\R{d\times N})$ such that $|D^s\bar g|(\partial\Omega)=0$ (see \cite{Gagliardo,Gerhardt}).
This extension permits us to add a term to $I^{\alpha_r}(g,G;\Omega_r)$ such that the resulting sum $\overline I^{\alpha_r}(\bar g,\bar G;\Omega)$ is an integral over the fixed domain $\Omega$ whose limit can be studied via the Reshetnyak continuity-type Theorems~\ref{ReshetnyaktheoremBCMS} and~\ref{ReshetnyaktheoremKR}, and the resulting explicit formula turns out not to depend on the particular choice of the extension. % \red{(see Lemma~\ref{1106})}.
%$(\bar g,\bar G)_\Omega=(g,G)$.
Accordingly, we restrict our attention to functions $\Psi$ with at most linear growth at infinity, belonging to the classes $(E)$ or $(L)$ described in Subsection~\ref{sec:MT}.
We are now in a position to state our result concerning the limit \eqref{1013}, the spatial localization of the upscaled energy $I^{\alpha_r}(g,G;\Omega_r)$ in \eqref{864}.
\begin{theorem}\label{final1E}
Let $\Omega\subset\R{N}$ be a bounded Lipschitz domain, let $\Psi\colon \Omega\times \R{d\times N} \to [0,+\infty)$ be a continuous function belonging to (E) or (L), and let $\alpha_r$ be as in \eqref{1014}.
%and let $\widetilde \sI$ be given by \eqref{1210}.
Then 
%\begin{itemize}
%\item[(i)] 
for any $(g,G)\in SD(\Omega;\R{d}\times\R{d\times N})$ %if $\alpha \in C_0(B_1)$, 
%\end{itemize}
%or
%\begin{itemize}
%\item[(ii)] for any $(g,G) \in SD_{\mathrm{uba}}(\Omega;\R{d})$ if $\alpha \in C_b(B_1)$, 
%\end{itemize}
the limiting energy $I(g,G)$ in \eqref{1013} is given by
%\begin{equation}\label{1214}
%\lim_{r\to0^+} \widetilde\sI(\mu_r;\Omega)=\widetilde\sI(\mu;\Omega),
%\end{equation}
%namely, recalling \eqref{1204} and \eqref{1213},
\begin{equation}\label{1146}
I(g,G) = \int_\Omega \Psi\big(x,\nabla g(x) - G(x)\big)\,\de x+\int_{\Omega\cap S_g}\Psi^{\infty}\Big(x,\frac{\de D^s g}{\de|D^s g|}(x)\Big)\,\de|D^s g|(x),
\end{equation}
with $\Psi^{\infty}$ defined by \eqref{defrecession2}.
%\red{
%\begin{equation}\label{1205}
%\Psi^\infty(x,\xi)= \limsup_{\substack{x'\to x \\ \xi' \to \xi\\ t\to +\infty}}\frac{\Psi(x',t \xi')}{t}
%\end{equation}
%for every $x\in \Omega$, $\xi\in\S{d\times N-1}$, and extended to $\R{d\times N}$ by positive $1$-homogeneity.
%}
%
%Moreover, if $\Psi$ is bounded \eqref{1214} holds 
%\begin{itemize}
%\item[(iii)] for any $(g,G)\in SD(\Omega;\R{d})$ for any $\alpha\in C_b(B_1)$
%\end{itemize}
%and \eqref{1146} reads
%\begin{equation}\label{802}
%\widetilde\sI(\mu;\Omega) = \int_\Omega \Psi\big(\nabla g(x) - G(x)\big)\,\de x.
%\end{equation}
\end{theorem}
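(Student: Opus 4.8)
The plan is to pass to the limit $r\to 0^+$ inside the representation \eqref{864} by recognizing $I^{\alpha_r}(g,G;\Omega_r)$ as (a piece of) a Reshetnyak-type functional evaluated at the mollified measure $\mu\ast\alpha_r$, where $\mu\coloneqq(\nabla g-G)\cL^N+D^sg$ is exactly the centerline measure appearing in \eqref{centerline}. First I would fix the extension $(\bar g,\bar G)\in BV(\R N;\R d)\times L^1(\R N;\R{d\times N})$ with $|D^s\bar g|(\partial\Omega)=0$ (available by \cite{Gagliardo,Gerhardt}) and set $\bar\mu\coloneqq(\nabla\bar g-\bar G)\cL^N+D^s\bar g\in\cM(\R N;\R{d\times N})$. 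Then the inner double integral in \eqref{864} is precisely $(\bar\mu\ast\alpha_r)(x)=(D^s\bar u\ast\alpha_r)(x)$ in the appropriate sense: the volume term $\int_{B_r(x)}\alpha_r(y-x)(\nabla g-G)(y)\,\de y$ is the convolution of the absolutely continuous part, and $\int_{B_r(x)\cap S_g}\alpha_r(y-x)[g](y)\otimes\nu_g(y)\,\de\cH^{N-1}(y)$ is the convolution of $D^sg = D^jg$ (recall $g\in SBV$, so $D^cg=0$). So $I^{\alpha_r}(g,G;\Omega_r)=\int_{\Omega_r}\Psi\big(x,(\bar\mu\ast\alpha_r)(x)\big)\,\de x$, and I would introduce the companion functional over the fixed domain,
\begin{equation}\label{barIdef}
\overline I^{\alpha_r}(\bar g,\bar G;\Omega)\coloneqq\int_{\Omega}\Psi\big(x,(\bar\mu\ast\alpha_r)(x)\big)\,\de x,
\end{equation}
and control the discrepancy $\overline I^{\alpha_r}-I^{\alpha_r}$ separately.

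The main step is the convergence $\overline I^{\alpha_r}(\bar g,\bar G;\Omega)\to\sI(\bar\mu\llcorner\overline\Omega)$ (resp.\ $\overline\sI$) as $r\to0^+$, where $\sI$ is the functional \eqref{959} built from $\Phi=\Psi$ and, by the Radon--Nikodym decomposition, $\sI(\bar\mu\llcorner\overline\Omega)$ equals the right-hand side of \eqref{1146}. For this I invoke Theorem~\ref{815}: the mollified measures $\bar\mu_r\coloneqq(\bar\mu\ast\alpha_r)\cL^N$ converge weakly-$*$ to $\bar\mu$ and, crucially, because $|\bar\mu|(\partial\Omega)\le|D^s\bar g|(\partial\Omega)+0=0$ (the Lebesgue part charges no boundary and we chose the extension with $|D^s\bar g|(\partial\Omega)=0$; I should also note $|\bar G|\cL^N(\partial\Omega)=0$ since $\partial\Omega$ is Lebesgue-null), we get $\langle\bar\mu_r\rangle(\Omega)\to\langle\bar\mu\rangle(\Omega)$, i.e.\ $\bar\mu_r\to\bar\mu$ $\langle\cdot\rangle$-strictly on $\Omega$ (or on $\overline\Omega$ for the class (E) version). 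Now the two cases split. If $\Psi\in(E)$, apply the Reshetnyak continuity Theorem~\ref{ReshetnyaktheoremKR} on $\overline\Omega$: since $\bar\mu_r$ is purely absolutely continuous, $\overline\sI(\bar\mu_r)=\int_\Omega\Psi(x,(\bar\mu\ast\alpha_r)(x))\,\de x=\overline I^{\alpha_r}$, and $\overline\sI(\bar\mu_r)\to\overline\sI(\bar\mu\llcorner\overline\Omega)$, which is \eqref{1146} (the boundary integral in $\overline\sI$ vanishes because $|D^s\bar g|(\partial\Omega)=0$). If $\Psi\in(L)$, I get only upper semicontinuity for free from Theorem~\ref{ReshetnyaktheoremBCMS}, giving $\limsup_{r\to0^+}\overline I^{\alpha_r}\le\sI(\bar\mu\llcorner\Omega)$; for the matching liminf I use the Lipschitz bound \eqref{S102}: $\big|\Psi(x,(\bar\mu\ast\alpha_r)(x))-\Psi(x,m^a(x))\big|\le L_\Psi|(\bar\mu\ast\alpha_r)(x)-m^a(x)|$ where $m^a=\nabla\bar g-\bar G$, so it suffices to show $\int_\Omega|\bar\mu\ast\alpha_r - m^a|\,\de x\to\int_\Omega|\bar\mu^s|$-type control; more cleanly, approximate $\Psi$ from below/above by functions in $(E)\cap(L)$ or use Lemma~\ref{S105} together with a mollification-commutes argument, combining the a.c.\ part (where $\bar\mu\ast\alpha_r\to m^a$ in $L^1_{\loc}$) with the singular part handled via \eqref{lT816}. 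Either way one obtains $\lim_{r\to0^+}\overline I^{\alpha_r}=\sI(\bar\mu\llcorner\Omega)$, which is the right-hand side of \eqref{1146}.

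Finally I would dispose of two routine points. First, the gap between $I^{\alpha_r}(g,G;\Omega_r)$ (integral over $\Omega_r$) and $\overline I^{\alpha_r}(\bar g,\bar G;\Omega)$ (integral over $\Omega$): on $\Omega\setminus\Omega_r$ the linear-growth bound \eqref{S100} gives $0\le\overline I^{\alpha_r}-I^{\alpha_r}\le C_\Psi\int_{\Omega\setminus\Omega_r}(1+|(\bar\mu\ast\alpha_r)(x)|)\,\de x\le C_\Psi\big(\cL^N(\Omega\setminus\Omega_r)+|\bar\mu|((\Omega\setminus\Omega_r)^r)\big)$ by \eqref{lT816}, and both terms tend to $0$ as $r\to0^+$ since $\cL^N(\Omega\setminus\Omega_r)\to0$, $(\Omega\setminus\Omega_r)^r$ shrinks to $\partial\Omega$, and $|\bar\mu|(\partial\Omega)=0$. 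Second, independence of the extension: any two admissible extensions agree on $\Omega$ and both contribute nothing on $\partial\Omega$, and the limit \eqref{1146} is manifestly written in terms of $(g,G)$ on $\Omega$ alone, so the formula is well posed. I expect the main obstacle to be the class (L) case, where $\Psi^\infty$ is only a $\limsup$ (Lemma~\ref{S105}) and one has mere upper semicontinuity; reconciling this with the required equality forces a careful $L^1$-estimate on the singular part of $\bar\mu\ast\alpha_r$ near $S_g$, for which the sharp bound \eqref{lT816} together with $\langle\cdot\rangle$-strict convergence from Theorem~\ref{815}(ii) is the essential tool.
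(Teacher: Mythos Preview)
Your overall architecture matches the paper's proof almost exactly: extend $(g,G)$ to $(\bar g,\bar G)$ with $|D^s\bar g|(\partial\Omega)=0$, pass to the fixed-domain functional $\overline I^{\alpha_r}$, control the strip $\Omega\setminus\Omega_r$ via \eqref{S100} and \eqref{lT816}, establish $\langle\cdot\rangle$-strict convergence $\bar\mu_r\to\bar\mu$ on $\overline\Omega$ from Theorem~\ref{815}, and then invoke Theorem~\ref{ReshetnyaktheoremKR} for class (E) and Theorem~\ref{ReshetnyaktheoremBCMS} for the upper bound in class (L). All of that is correct and is precisely what the paper does.

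The gap is the lower bound in the (L) case, which you correctly flag as the main obstacle but do not actually close. Your first suggestion, the global Lipschitz estimate $|\Psi(x,(\bar\mu\ast\alpha_r)(x))-\Psi(x,m^a(x))|\le L_\Psi|(\bar\mu\ast\alpha_r)(x)-m^a(x)|$, cannot work: integrating it over $\Omega$ leaves you with $\int_\Omega |(D^s\bar g\ast\alpha_r)(x)|\,\de x$, which is bounded but does \emph{not} tend to zero, and this route never produces the surface term $\int_{S_g}\Psi^\infty(\cdot)\,\de|D^sg|$. Your other suggestions (approximation by $(E)\cap(L)$ functions, a ``mollification-commutes'' argument) are too vague to constitute a proof. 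The paper instead runs a blow-up argument: it defines $\bar\theta_r\coloneqq\Psi(\cdot,(\bar\mu\ast\alpha_r)(\cdot))\cL^N$, extracts a weak-$*$ limit $\bar\theta$, and proves the two Radon--Nikod\'ym lower bounds $\frac{\de\bar\theta}{\de\cL^N}(x_0)\ge\Psi(x_0,(\nabla g-G)(x_0))$ and $\frac{\de\bar\theta}{\de|D^sg|}(x_0)\ge\Psi^\infty(x_0,\tau(x_0))$ pointwise. The first uses the splitting $\bar\mu\ast\alpha_r=((\nabla\bar g-\bar G)\cL^N)\ast\alpha_r+(D^s\bar g)\ast\alpha_r$, the Lipschitz bound \eqref{S102} to discard the singular piece (whose density with respect to $\cL^N$ is zero), and then Fatou plus \eqref{S103}. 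The second uses Lemma~\ref{S105} to realize $\Psi^\infty$ as a $\limsup$ along a sequence $t_k\to\infty$ chosen as $t_k=|D^s\bar g|(Q(x_0;\delta_k))/\delta_k^N$, followed by the same Lipschitz/modulus estimates. This is the missing idea you need to complete the argument.
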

\begin{remark}\label{tuttiuguali}
We observe the following:
\begin{itemize}
%\item %Lemma~\ref{1106} implies that the difference in \eqref{1106} vanishes in the limit $r\to0^+$, so that $\sI_0(\mu;\Omega)=\widetilde\sI(\mu;\Omega)$. For any $(g,G)\in SD(\Omega;\R{d})$, let $\mu$ be defined according to \eqref{1204}. 
%Theorem~\ref{final1E} together with Lemma~\ref{1106} imply the following equalities
%\begin{equation}\label{1220}
%\widetilde\sI(\mu;\Omega)=\lim_{r\to0^+}\sI_r(\mu_r;\Omega_r)=\cI(g,G)\eqqcolon I(g,G),
%\end{equation}
%so that the dependence upon $\alpha$ disappears in the limit and we can use the symbol $I(g,G)$ to denote the energy in \eqref{1146}.
\item The recession function $\Psi^\infty$ defined by \eqref{defrecession2} is finite whenever $\Psi$ is in (E) or (L). Notice that it is a limit if $\Psi$ is in (E), see \eqref{S101}.
\item In Theorem~\ref{final1E}, the resulting bulk energy density retains the character of the function $\Psi$ that defines the initial non-local energy \eqref{1009}.
Moreover, we observe that since %the recession function 
$\Psi^\infty$ vanishes in the case of sublinear growth at infinity formula \eqref{1146} reduces to
\begin{equation}\label{624}
I(g,G) = \int_\Omega \Psi\big(x,\nabla g(x) - G(x)\big)\,\de x
\end{equation}
when $\Psi$ has sublinear growth. \qed
\end{itemize}
\end{remark}
It is now natural to consider an initial energy that combines both a local contribution, described by the functional $E_L$ in \eqref{1002}, and a non-local one, described by the functional $E^{\alpha_r}$ in \eqref{1009}. % or by $\widetilde E^{\alpha_r}$ in \eqref{1200}.
%Recalling the notation $\cE^{\alpha_r}$ introduced just above \eqref{1012}, 
We now focus our attention on the relaxation/upscaling, in the context of \cite{CF1997}, of the energy functional
\begin{equation}\label{1036}
F^{\alpha_r}(u)\coloneqq E_L(u)+E^{\alpha_r}(u),
\end{equation}
namely, we consider 
\begin{equation}\label{1037}
\begin{split}
J^{\alpha_r}(g,G)\coloneqq \inf_{\{u_n\}\subset SBV(\Omega;\R{d})}\Big\{\liminf_{n\to\infty} F^{\alpha_r}(u_n) : &\; 
\text{$u_n$ converges to $(g,G)$ according to \eqref{appCF}} \\
%$\sup_n%\big(\|\nabla u_n\|_{L^p(\Omega;\R{d\times N})}+
%\|u_n\|_{BV(\Omega;\R{d})}%\big)
%<\infty$}\Big\}.
& \;\text{and
$\sup_n %\big(
\|\nabla u_n\|_{L^p(\Omega;\R{d\times N})} %+\|u_n\|_{BV(\Omega;\R{d})}\big)
<\infty$}\Big\}
\end{split}
\end{equation}
%\note{also here}
where, as in \eqref{1003}, $p\geq1$.
%If the convergence $\rightsquigarrow$ is $\wSD$, the coupling between the local and non-local parts is immediately understood recalling the definition \eqref{1003} of $I_L(g,G)$; on the contrary, if the convergence $\rightsquigarrow$ is $\sSD$, Lemma~\ref{609} grants that the recovery sequences for the relaxation \eqref{1003} satisfy property \eqref{608b}, so that they can be used as competitors for the strong convergence $\sSD$. 
%\red{$\sSD$ is not a restriction when we couple local and non-local energies, since the recovery sequences converge in the $\sSD$ sense.}
We will prove in Theorem~\ref{main2} that the relaxation/upscaling of the sum $F^{\alpha_r}$ in \eqref{1036} is the sum of the upscaling $I^{\alpha_r}$ in \eqref{864} of $E^{\alpha_r}$ and the relaxation/upscaling $I_L$ in \eqref{1003} of $E_L$:
\begin{equation}\label{1038}
J^{\alpha_r}(g,G)=I_L(g,G)+I^{\alpha_r}(g,G;\Omega_r),
\end{equation}
so that, defining
\begin{equation}\label{1039}
J(g,G)\coloneqq\lim_{r\to0^+} J^{\alpha_r}(g,G)
\end{equation}
and keeping \eqref{1013} in mind, we obtain
\begin{equation}\label{1040}
J(g,G)=I_L(g,G)+I(g,G).
\end{equation}
Eventually, from \eqref{1004} and \eqref{1146} the energy $J(g,G)$ has the explicit expression (see \eqref{812} in Corollary~\ref{finalfinal})
\begin{equation}\label{energyJ}
\begin{split}
J(g,G) = & \int_\Omega H(\nabla g(x),G(x))\,\de x+\int_{\Omega\cap S_g} h([g](x),\nu_g(x))\,\de\cH^{N-1}(x) \\
& + \int_\Omega \Psi\big(x,\nabla g(x) - G(x)\big)\,\de x+\int_{\Omega\cap S_g}\Psi^{\infty}\Big(x,\frac{\de D^s g}{\de|D^s g|}(x)\Big)\,\de|D^s g|(x).
\end{split}
\end{equation}
The formula above couples together the contributions to the total energy $J(g,G)$ coming from the local part $I_L(g,G)$ and from the limit $I(g,G)$ of the non-local energy. The contribution of the singularities of $g$ enters the expression of $J(g,G)$ both through the surface term of $I_L(g,G)$ and through the surface term of $I(g,G)$, via the %recession 
function $\Psi^\infty$, thus retaining the linear character at infinity of $\Psi$. The effect of $\Psi$ on the disarrangements is encoded in the bulk term of $I(g,G)$.
Corollary~\ref{finalfinal} provides the explicit representation \eqref{energyJ} and shows that the nonlinearities introduced in the microlevel energy $E^{\alpha_r}$ through $\Psi$ persist under the two operations of relaxation/upscaling and spatial localization. 

\section{Proofs of Theorems~\ref{mainrho} and~\ref{final1E}}\label{section:new3}
In this section we perform the two limiting operations described in the Introduction. We first upscale the non-local energy $E^{\alpha_r}$ defined in \eqref{1009}, that is, we prove Theorem~\ref{mainrho}, and then we spatially localize the energy $I^{\alpha_r}$ defined in \eqref{864}, that is, we prove Theorem~\ref{final1E}.
\subsection{Upscaling of the non-local energy $E^{\alpha_r}$}\label{section:statement of the problem}
This subsection is devoted to the proof of Theorem~\ref{mainrho}.
\begin{proof}[Proof of Theorem~\ref{mainrho}]
%We start by proving part (i), namely we take $\cE^{\alpha_r}=E^{\alpha_r}$ from \eqref{1009}, $\alpha\in C_0(B_1)$, and the relaxation problem \eqref{1012} with respect to the convergence $\wSD$ (see \eqref{1000}).
%further assume that $\alpha_r \in C_c^\infty(B_r)$. \textcolor{red}{FOR US: no symmetry needed, since AFP hold anyhow!}
Let $\alpha_r$ be as in \eqref{1014} and let $(g,G)\in SD(\Omega;\R{d}\times\R{d\times N})$.
For any sequence $u_n \in \Ad(g,G)$ (see the definition \eqref{Addef}) let $\mu_n,\mu \in\cM(\Omega;\R{d\times N})$ be defined as
%$$\mu_n \coloneqq = D^s u_n , \in \mu\in\cM(\Omega;\R{d\times N})$$  and let 
\begin{equation}\label{1204}
\mu_n \coloneqq  D^s u_n, \qquad \mu\coloneqq (\nabla g-G)\cL^N+D^s g.%qquad\text{and}\qquad\mu_r\coloneqq(\mu*\alpha_r)\cL^N.
\end{equation}
Then, since $u_n\in\Ad(g,G)$, we have that 
\begin{equation}\label{621}
\mu_n \wsto\mu.
\end{equation}
For $r \in (0,1)$ and for $x \in \Omega_r$ % \subset \subset \Omega$ such that the support \red{Marco: of $\alpha_r$, right? or what?} is compactly contained in $\Omega$, 
define
%with an abuse of notation 
\begin{equation*}%\label{1507}
\begin{split}
f_{r,n}(x) \coloneqq & \int_{\Omega\cap B_r(x)\cap S_{u_n}} \!\!\!\!\!\! \alpha_r(y-x)[u_n](y)\otimes \nu_{u_n}(y) \, \de \cH^{N-1}(y)=(\mu_n*\alpha_r)(x),\\
f_r(x) \coloneqq & \int_{\Omega\cap B_r(x)} \!\!\!\!\!\! \alpha_r(y-x)(\nabla g(y)-G(y))\,\de y+\int_{\Omega\cap B_r(x)\cap S_g} \!\!\!\!\!\! \alpha_r(y-x)[g](y)\otimes \nu_g(y) \, \de \cH^{N-1}(y)=(\mu*\alpha_r)(x).\\
\end{split}
\end{equation*}
%\ora{M (my proposal): \red{Ze: changed this sentence: By application of \eqref{856} in  Proposition \ref{AFPb}  (considering $u = \alpha_r \in C^{\infty}_c(B_r)$ ) we immediately have {-E: it is even written after proposition 2.2 (even just for continuous functions with compact support) where I quoted AFP. -}}
%\note{E: Erase ?By Proposition~\ref{AFPb}(b)} 
By \eqref{cuconv}, for every $x\in\Omega_r=\{x\in \Omega:\dist{x}{\partial \Omega}>r\}$
\begin{equation}\label{pcconv}
\lim_{n\to\infty} f_{r,n}(x)=f_r(x),
%f_{r,n}(\cdot) \to f_r(\cdot),\quad\text{$\cL^N$-a.e.~in $\Omega_r$,}
\end{equation} 
and
we claim that
\begin{equation}\label{M11}
\lim_{n\to\infty} \int_{\Omega_r} \Psi(x,f_{r,n}(x))\,\de x = \int_{\Omega_r} \Psi (x,f_r(x))\,\de x.
\end{equation}
Indeed, bounds on $\alpha_r$, which follow from \eqref{1014}, guarantee that $|f_{r,n}(x)|\leq C_{\alpha_r} C \|(g,G)\|_{SD}$ %\red{evaluated in $\Omega$, right?} 
which, in turn, using the continuity of $\Psi$, provides the uniform upper bound
\begin{equation}\label{1025}
\big|\Psi\big(x,f_{r,n}(x)\big)\big|\leq \max \big\{\Psi(x,A): x\in\overline \Omega_r, |A|\leq C_{\alpha_r} C\|(g,G)\|_{SD}\big\},
\end{equation}
for every $x\in\Omega_r$. Here, $C_{\alpha_r}$ is a constant depending only on $\alpha_r$, whereas $C$ is the constant in \eqref{appEST}.
By \eqref{pcconv} and the continuity of $\Psi$, $\Psi(x, f_{r,n}(x))\to \Psi(x, f_r(x))$ in $\Omega_r$ as $n\to\infty$ for every $x$, and \eqref{M11} follows by Lebesgue's theorem on dominated convergence.
This concludes the proof.
%\note{I didn't consider a subsequence. Is this ok?}
%
%\gre{
%We now wish to prove that \eqref{1015} hold. To this end, it is sufficient to show that
%\begin{equation}\label{1030}
%\lim_{n\to\infty} E^{\alpha_r}(u_n)=\int_{\Omega_r} \Psi\big(x,(\mu*\alpha_r)(x)\big)\,\de x.
%\end{equation}
%We observe that the integrand in the right hand side of \eqref{1030} is bounded $\cL^N$-a.e., by the continuity of $\Psi$ and the fact that $\mu \ast \alpha_r \in C_0(\mathbb R^d)$; indeed, analogously to \eqref{1025}, the following estimate holds
%\begin{equation*}%\label{1026bis}
%|\Psi(x,\mu\ast\alpha_r)(x)|\leq \max \big\{\Psi(x,A): x\in\Omega_r, |A|\leq C_{\alpha_r} \|(g,G)\|_{SD}\big\}, 
%\end{equation*}
%Moreover, \eqref{1025} guarantees that, the sequence of functions $\Psi(\cdot, \mu_n\ast \alpha_r(\cdot))$ is equi-integrable,
%thus \eqref{1030} follows by the Vitali-Lebesgue convergence theorem (see \cite[Theorem~2.24]{FLbook}).
%This concludes the proof of part (i).
%}
\end{proof}
\subsection{Spatial localization of the upscaled non-local energy}\label{rto0}
We now turn to the study of
%\red{In this section we deal with} 
the limit \eqref{1013}, that is, we find an explicit formula for the energy $I^{\alpha_r}(g,G;\Omega_r)$ in the limit as the measure of non-locality $r$ tends to zero.
As mentioned in Sections~\ref{section:preliminaries} and~\ref{section:mathform}, we restrict our attention to continuous functions $\Psi\colon\Omega\times\R{d\times N}\to[0,+\infty)$ belonging to the classes (E) or (L). 
As a first step, given $(g,G)\in SD(\Omega;\R{d}\times\R{d\times N})$, we provide a pair $(\bar g,\bar G)\in BV(\R{N}%\red{\Omega+B_1}
;\R{d})\times L^1(\R{N};%\red{\Omega+B_1};
\R{d\times N})$ satisfying
\begin{itemize}
\item[(e1)] $(\bar g,\bar G)|_\Omega=(g,G)$;
\item[(e2)] $|D\bar g|(\R{N}%\red{\Omega+B_1}
)\leq C\|g\|_{BV(\Omega;\R{d})}$, for some constant $C>0$;
\item[(e3)] $|D^s \bar g|(\partial\Omega)=0$.
\end{itemize}
Because $\partial\Omega$ is Lipschitz and, in particular, $g\in BV(\Omega;\R{d})$, a function $\bar g\in BV(\R{N};\R{d})$  satisfying $\bar g|_\Omega=g$, (e2), and (e3) is provided by \cite[Theorem~1.4]{Gerhardt}. 
%\red{Then we can take $\bar g\coloneqq \hat g|_{\Omega+B_1}$.}
Any function $\bar G\in L^1(\R{N}%\red{\Omega+B_1}
;\R{d\times N})$ satisfying $\bar G|_\Omega=G$ provides the second element of the pair $(\bar g,\bar G)$ satisfying (e1-3).
For any $(\bar g,\bar G)$ satisfying (e1-3) and for $\alpha_r$ as in \eqref{1014}, in analogy to \eqref{1204}, we define %\blu{(with an abuse of notation)} the measures $\bar\mu,\bar\mu_r\in\cM(\overline{\Omega};\R{d\times N})$ by
\begin{equation}\label{mubar}
\bar\mu\coloneqq (\nabla \bar g-\bar G)\cL^N+D^s\bar g\qquad\text{and}\qquad \bar\mu_r\coloneqq (\bar\mu*\alpha_r)\cL^N,
\end{equation}
where  we observe that the latter expression is well defined for every $x \in \R{N}$.
Moreover, $|\bar\mu|=|\nabla \bar g-\bar G|\cL^N+|D^s\bar g|$.
%\blu{and we consider them as measures in $\cM(\overline{\Omega};\R{d\times N})$; 
Finally,
%\blu{where we restrict $(\bar g,\bar G)$ to $\overline{\Omega}$.}
%where we have restricted $(\bar g,\bar G)$ to $\overline{\Omega+B_1}$, 
we define the functional $\overline I^{\alpha_r}(\bar g,\bar G;\Omega)$ by
\begin{equation}\label{Ibar}
\begin{split}
\overline I^{\alpha_r}(\bar g,\bar G;\Omega) \coloneqq & \int_{\Omega} \Psi(x,(\bar\mu*\alpha_r)(x))\,\de x.
%= I^{\alpha_r}(g,G;\Omega_r)+ \int_{\Omega\setminus \Omega_r} \Psi(x,(\bar\mu*\alpha_r)(x))\,\de x,
\end{split}
\end{equation}
%where $I^{\alpha_r}(g,G;\Omega_r)$ is defined in \eqref{864} and, 
Noting that, by (e1), $\bar \mu\res\Omega=\mu$ (see \eqref{1204} and \eqref{mubar}), and recalling \eqref{864}, \eqref{Ibar} can be written as
\begin{equation}\label{Ialpharnew}
\overline I^{\alpha_r}(\bar g,\bar G;\Omega)= I^{\alpha_r}(g,G;\Omega_r)+ \int_{\Omega\setminus \Omega_r} \Psi(x,(\bar\mu*\alpha_r)(x))\,\de x.
\end{equation}
We are now ready to prove Theorem~\ref{final1E}
%\red{
%We start with the assertion that, in the limit $r\to0^+$, the difference between functionals $I^{\alpha_r}$ in \eqref{1015} and $\tilde I^{\alpha_r}$ in \eqref{1202} vanishes. To do this, we rely on the formalism of the functional $\widetilde\sI$ defined in \eqref{1210}, where $\mu_r$ and $\mu$ are the measures introduced in \eqref{1204}.
%}
\begin{proof}[Proof of Theorem~\ref{final1E}]
Let us fix $(g,G)\in SD(\Omega;\R{d}\times\R{d\times N})$ and let $(\bar g,\bar G)\in BV(\R{N};%\red{\Omega+B_1};
\R{d})\times L^1(\R{N};%\red{\Omega+B_1};
\R{d\times N})$ satisfy (e1-3).
In view of 
%\eqref{Ibar} and 
\eqref{Ialpharnew}, it suffices to show that $\lim_{r\to0^+} \overline I^{\alpha_r}(\bar g,\bar G;\Omega)$ exists and is equal to the expression for $I(g,G)$ in \eqref{1146} and to show that the integral in the right-hand side of \eqref{Ialpharnew} tends to zero as $r\to0^+$, namely
\begin{equation}\label{estimate}
\lim_{r\to0^+}  \int_{\Omega\setminus \Omega_r} \Psi(x,(\bar\mu*\alpha_r)(x))\,\de x =0.
\end{equation}
Because $\lim_{r\to0^+} \cL^N(\Omega\setminus\Omega_r)=0$, \eqref{estimate} follows by using Fubini's Theorem. 
Indeed, \eqref{1014}, \eqref{convalpha}, \eqref{lT816}, and \eqref{S100} give the following chain of inequalities %\red{Ze: this phrase doesn't make sense anymore(recall the proof of \eqref{LT816})}
\begin{equation}\label{chain}
\begin{split}
\bigg|  \int_{\Omega\setminus \Omega_r} \Psi(x,(\bar\mu*\alpha_r)(x))\,\de x \bigg| \leq & C_\Psi \int_{\Omega\setminus\Omega_r} \big(1+|(\bar\mu*\alpha_r)(x)|\big)\,\de x \\
\leq & C_\Psi \bigg(\cL^N(\Omega\setminus\Omega_r) +\int_{\Omega\setminus\Omega_r}  |(\bar\mu*\alpha_r)(x)|\,\de x\bigg) \\
\leq & C_\Psi \bigg( \cL^N(\Omega\setminus\Omega_r) + \int_{\Omega\setminus\Omega_r}  \int_{B_r(x)} \alpha_r(y-x)\,\de|\bar\mu|(y)\de x\bigg) \\
\leq & C_\Psi \bigg( \cL^N(\Omega\setminus\Omega_r) + \int_{(\Omega\setminus\Omega_r)^r} \bigg(\int_{B_r(y)}\alpha_r(y-x)\,\de x\bigg) \de|\bar \mu|(y) \bigg) \\ \leq & C_\Psi \Big( \cL^N(\Omega\setminus\Omega_r) + |\bar\mu|\big((\Omega\setminus\Omega_r)^r\big)
%C\big(\|g\|_{BV(\Omega;\R{d})}+\|\bar G\|_{L^1(\Omega;\R{d\times N})}\big)|\Omega\setminus\Omega_r|
\Big),%\to 0\qquad\text{as $r\to0^+$,}
\end{split}
\end{equation}
where we recall that, given a set $A\subseteq\R{N}$, we define $A^r\coloneqq A+B_r$.
The last term above converges to $0$ as $r \to 0^+$, since $(\Omega\setminus\Omega_r)^r\to\partial\Omega$ as $r\to0^+$ and $\cL^N(\partial \Omega)=0$, and $|\bar{\mu}|(\partial \Omega)=0$ by the lower semicontinuity of the total variation with respect to the weak-* convergence of measures (see \eqref{mubar}) and (e3).
%\red{check exposition after possibly fixing Theorem~\ref{815}.}
%\begin{equation*}
%\begin{split}
%|\bar \mu|(B_r(x)) \leq |\bar \mu|(\R{N}) = \int_{\R{N}} |(\nabla \bar g-\bar G)(x)|\,\de x+|D^s\bar g|(\R{N}) \leq & |D\bar g|(\R{N}) + \|\bar G\|_{L^1(\R{N};\R{d\times N})} \\
%\leq & C\big(\|g\|_{BV(\Omega;\R{d})}+\|\bar G\|_{L^1(\Omega;\R{d\times N})}\big).
%\end{split}
%\end{equation*}
We now prove that
\begin{equation}\label{T555}
\lim_{r\to0^+} \overline I^{\alpha_r}(\bar g,\bar G;\Omega) = \int_\Omega \Psi\big(x,\nabla g(x) - G(x)\big)\,\de x+\int_{\Omega\cap S_g}\Psi^{\infty}\Big(x,\frac{\de D^s g}{\de|D^s g|}(x)\Big)\,\de|D^s g|(x).
\end{equation}
To do so, let us define the functional $\overline \sI\colon\cM(\overline \Omega%\red{\Omega+B_1}
;\R{d\times N})\to[0,+\infty)$ by
\begin{equation}\label{1213}
\overline\sI(\lambda)\coloneqq \int_\Omega \Psi\Big(x,\frac{\de\lambda}{\de\cL^N}(x)\Big)\,\de x+ \int_{\overline\Omega \cap\spt (|\lambda^s|)} \Psi^\infty\Big(x,\frac{\de\lambda}{\de|\lambda^s|}(x)\Big)\,\de|\lambda^s|(x),
\end{equation}
for $\lambda\in\cM(\overline \Omega%\red{\Omega+B_1}
;\R{d\times N})$, where %\red{$\lambda^s$ is the part of $\lambda$ which is singular with respect to $\cL^N$, $|\lambda^s|$ is its total variation, and} 
$\Psi^\infty$ denotes the recession function at infinity of $\Psi$ (see \eqref{defrecession2}).
Keeping \eqref{mubar}, \eqref{Ibar}, and \eqref{1213} in mind, we can write $\overline I^{\alpha_r}(\bar g,\bar G;\Omega)=\overline\sI(\bar\mu_r)$; similarly, invoking (e1), the right-hand side of \eqref{T555} can be written as $\overline\sI(\bar\mu)$, so that \eqref{T555} is proved if we show that 
\begin{equation}\label{T556}
\lim_{r\to0^+} \overline\sI(\bar\mu_r)=\overline\sI(\bar\mu).
\end{equation}
Recalling the definitions of $\bar\mu,\bar\mu_r$ in \eqref{mubar}, we %argue as in \eqref{chain} and 
use %\note{(e2)} and 
\eqref{lT816} to obtain the estimate
%\blu{E: To double check
%\begin{equation*}
%|\bar \mu_r|(\overline\Omega) %=\lim_{R\to+\infty} |\bar\mu_r|(B_R) \leq \lim_{R\to+\infty} |\bar\mu|((B_R)^r) 
%\leq C\big(\|g\|_{BV(\Omega^r;\R{d})}+\|\bar G\|_{L^1(\red{\R{N}}\blu{\Omega^r}\red{+B_1};\R{d\times N})}\big),
%\end{equation*}
%indeed 
$$
|\bar \mu_r|(\overline{V}) %=\lim_{R\to+\infty} |\bar\mu_r|(B_R) \leq \lim_{R\to+\infty} |\bar\mu|((B_R)^r) 
\leq \|\nabla \bar g- \bar G\|_{L^1(V^r;\R{d})}+|D^s \bar g|(V^r),
$$
for every open set $V \subseteq \mathbb R^N$.
In turn, %\red{ Ze: arguing as in the proof of} 
Theorem~\ref{815}(i) and Proposition \ref{AFPb}(b) %\red{Ze , this} 
entail that $\bar \mu_r$ converges locally weakly-*  in $\cM(\overline\Omega;\R{d\times N})$ to $\bar\mu$ 
and $\bar \mu_r(\bar\Omega)\to \bar \mu(\bar \Omega )$.
%\red{In particular, we have}
Moreover, $|\bar\mu_r|$ converges locally weakly-* in $\mathcal M^+(\mathbb R^N)$ to $|\bar\mu|=|\nabla \bar g-\bar G|\cL^N+|D^s\bar g|$.
%\begin{equation}\label{427}
%|\bar\mu_r| \wsto |\nabla \bar g-\bar G|\mathcal L^N+ |D^s \bar g| %\qquad\red{E: \text{in $\mathcal M^+(\overline\Omega)$}}
%\end{equation} 
Furthermore, since $|\bar \mu|(\partial \Omega)\leq \|\bar g\|_{BV(\partial \Omega;\mathbb R^d)}+ \|\bar G\|_{L^1(\partial\Omega;\R{d\times N})}=0,$
%	(either via Stone-Weierstrass's theorem ensuring the separability of $C(\overline \Omega)$, theorem III.25 in Brezis 
\[\begin{split}
|\bar \mu|(\Omega)=|\bar \mu|(\overline\Omega)\leq \liminf_{r \to +\infty}|\bar \mu_r|(\overline\Omega) %=\lim_{R\to+\infty} |\bar\mu_r|(B_R) \leq \lim_{R\to+\infty} |\bar\mu|((B_R)^r) 
\leq &\; \liminf_{r \to +\infty}\big(|D^s \bar g|(\Omega^r)+\|\bar \nabla \bar g- \bar G\|_{L^1(\Omega^r;\R{d\times N})}\big) \\
=&\; |D^sg|(\Omega)+\| \nabla g- G\|_{L^1(\Omega;\R{d\times N})}.
\end{split}\]
This, together with Proposition~\ref{AFPb}(a) gives 
\begin{equation}\label{427bis}
|\bar\mu_r| \wsto |\nabla \bar g-\bar G|\mathcal L^N+ |D^s \bar g|\qquad \hbox{ in }\mathcal M^+(\overline\Omega).
\end{equation} 
Finally, %\red{E: since $|\bar\mu|(\partial \Omega)=0$}, 
by Theorem~\ref{815}(ii) we obtain that $\langle \bar\mu_r\rangle(\overline\Omega)\to\langle\bar\mu\rangle(\overline\Omega)$, yielding that 
%in order to prove that 
$\bar\mu_r$ %\res\overline\Omega$ 
$\langle\cdot\rangle$-strict converges to $\bar\mu$ %\res\overline\Omega$ 
(see Definition~\ref{1100}(iv)). % we are left to show that 
%\begin{equation*}%\label{806}
%\langle \bar\mu_r\rangle(\overline\Omega)\to\langle\bar\mu\rangle(\overline\Omega), 
%\end{equation*}
%To this end, it is sufficient to exploit the same argument as in \cite[Lemma~2.2(iii)]{BCMS2013}, replacing the sequence of standard mollifiers therein by $\alpha_r$, and exploiting the convergence of $\bar\mu_r \wsto \bar\mu$, convergence \eqref{427}, and the fact that, by (e3), $|\bar\mu|(\partial \Omega)=0$.

If $\Psi$ belongs to the class (E), since the $\limsup$ in the definition of $\Psi^\infty$ is indeed a limit (see Remark~\ref{tuttiuguali}) we can apply Theorem~\ref{ReshetnyaktheoremKR}, to obtain \eqref{T556}.
In turn \eqref{T555} is proved and therefore \eqref{1146}, which concludes the proof.

If $\Psi$ belongs to the class (L), Theorem~\ref{ReshetnyaktheoremBCMS} provides the upper bound
\begin{equation}\label{1464}
\begin{split}
\limsup_{r \to 0^+}  \int_{\Omega} \Psi\big(x,(\bar\mu*\alpha_r)(x)\big)\,\de x \leq & \int_\Omega \Psi\big(x,(\nabla \bar g - \bar G)(x)\big)\,\de x+\int_{\Omega\cap S_{\bar g}}\Psi^{\infty}\Big(x,\frac{\de D^s \bar g}{\de|D^s \bar g|}(x)\Big)\,\de|D^s \bar g|(x) \\
= & \int_\Omega \Psi\big(x,(\nabla  g -  G)(x)\big)\,\de x+\int_{\Omega\cap S_{ g}}\Psi^{\infty}\Big(x,\frac{\de D^s  g}{\de|D^s  g|}(x)\Big)\,\de|D^s  g|(x),
\end{split}
\end{equation}
where the equality holds by (e1).
We now prove that
\begin{equation}\label{1470}
\int_\Omega \Psi\big(x,\nabla  g(x) -  G(x)\big)\,\de x+\int_{\Omega\cap S_{ g}}\Psi^{\infty}\Big(x,\frac{\de D^s  g}{\de|D^s  g|}(x)\Big)\,\de|D^s  g|(x) \leq \liminf_{r \to 0^+}  \int_{\Omega} \Psi\big(x,(\bar\mu*\alpha_r)(x)\big)\,\de x.
\end{equation}
To this end, consider the measures $\bar \theta_r \in \cM^+(\Omega)$ defined by $\bar \theta_r \coloneqq \Psi(\cdot,(\bar\mu*\alpha_r)(\cdot))\cL^N$. Since they form a bounded family of Radon measures, they converge weakly-* to some positive measure $\bar \theta$.
We obtain \eqref{1470} if we show that
\begin{subequations}\label{808}
\begin{eqnarray}
\frac{\de \bar\theta}{\de\cL^N} (x) & \!\!\!\! \geq &  \!\!\!\!\Psi\big(x,(\nabla g - G)(x)\big) \quad\text{for $\cL^N$-a.e.~$x \in \Omega$,} \label{lowerbulk}\\
\frac{\de \bar\theta}{\de |D^s g|} (x) & \!\!\!\! \geq & \!\!\!\! \Psi^{\infty}\Big(x,\frac{\de D^s g}{\de|D^s g|}(x)\Big) \quad \text{for $\cH^{N-1}$-a.e.~$x \in S_g$.} \label{lowerinterf}
\end{eqnarray}
\end{subequations}
We start with \eqref{lowerbulk}.
By the linearity of the convolution operator and the definition of $\bar\mu_r$, we know that, as $r\to0^+$,
\begin{equation}\label{S110}
\big((\nabla \bar g-\bar G)\cL^N * \alpha_r\big)\cL^N\wsto (\nabla g-G)\cL^N \qquad\text{and}\qquad (D^s \bar g*\alpha_r)\cL^N\wsto D^s g,
\end{equation}
in $\cM(\Omega;\R{d\times N})$ and, by \cite[Corollary~2.1.17]{Grafakos}, we have
\begin{equation}\label{809}
%(\bar \mu*\alpha_r)(x)
(\nabla \bar g-\bar G)\cL^N * \alpha_r
\to (\nabla g-G)(x)\qquad \text{for $\cL^N$-a.e.~$x\in\Omega$}.
\end{equation}
Let us fix $x_0\in\Omega\setminus S_g$ which is a Lebesgue point for $\nabla g-G$ and let us compute
\begin{equation*}
\begin{split}
\frac{\de \bar \theta}{\de\cL^N}&(x_0) = \lim_{k\to \infty}\frac{\bar\theta(Q(x_0;\delta_k))}{\cL^N(Q(x_0;\delta_k))} = \lim_{k\to\infty}\lim_{r\to0^+} \frac{\bar\theta_r(Q(x_0;\delta_k))}{\delta_k^N}=  \lim_{k\to\infty}\lim_{r\to0^+} \frac{1}{\delta_k^N}\int_{Q(x_0;\delta_k)}  \Psi(x,(\bar \mu*\alpha_r)(x))\,\de x \\
=&   \lim_{k\to\infty}\lim_{r\to0^+} \frac{1}{\delta_k^N}\int_{Q(x_0;\delta_k)}  \Psi\big(x,((\nabla \bar g-\bar G)\cL^N * \alpha_r)(x)+ (D^s \bar g*\alpha_r)(x) \big)\,\de x \\
\geq &  \lim_{k\to\infty}\lim_{r\to0^+} \frac{1}{\delta_k^N}\int_{Q(x_0;\delta_k)}  \Psi\big(x,((\nabla \bar g-\bar G)\cL^N * \alpha_r)(x) \big)\,\de x  - \lim_{k\to\infty}\lim_{r\to0^+} \frac{L_\Psi}{\delta_k^N}\int_{Q(x_0;\delta_k)} \!\! |(D^s \bar g*\alpha_r)(x)|\,\de x,
\end{split}
\end{equation*}
where we have used \eqref{S102}.
Since, by the second convergence in \eqref{S110}, the last integral is the Radon-Nikod\'ym derivative of $|D^s \bar g|$ with respect to $\cL^N$, it vanishes, so that we have
\begin{equation*}
\begin{split}
\frac{\de \bar \theta}{\de\cL^N}(x_0) \geq & \lim_{k\to\infty}\lim_{r\to0^+} \frac{1}{\delta_k^N}\int_{Q(x_0;\delta_k)}  \Psi\big(x,((\nabla \bar g-\bar G)\cL^N * \alpha_r)(x) \big)\,\de x \\
\geq & \lim_{k\to\infty} \frac{1}{\delta_k^N}\int_{Q(x_0;\delta_k)}  \Psi\big(x,(\nabla g-G)(x) \big)\,\de x \\
\geq &  \lim_{k\to\infty}\int_{Q} \Psi\big(x_0+\delta_k y,(\nabla g-G)(x_0+\delta_k y) \big)\,\de y \geq \Psi\big(x_0,(\nabla g-G)(x_0) \big),
\end{split}
\end{equation*}
where we have used the continuity of $\Psi$, \eqref{809}, and Fatou's Lemma in the second inequality, a change of variables and \eqref{S102} and \eqref{S103} in the subsequent estimates.
This proves \eqref{lowerbulk}.
To prove \eqref{lowerinterf}, let us fix $x_0 \in S_g$ and let $\tau(x_0) \coloneqq \displaystyle\frac{ \de D^s g}{\de|D^s g|}(x_0)$.
By Lemma~\ref{S105}, the recession function $\Psi^\infty(x_0,\tau(x_0))$ can be computed using formula \eqref{defrecession3}.
Let now $t_k\in\R{}$ be a sequence diverging to $+\infty$ as $k\to\infty$ along which the $\limsup$ in \eqref{defrecession3} is indeed a limit, that is,
\begin{equation*}%\label{1488}
\Psi^\infty(x_0,\tau(x_0)) = \lim_{k \to \infty} \frac{\Psi(x_0,t_k\tau(x_0))}{t_k}.
\end{equation*} 
Since $\Psi$ is Lipschitz continuous, a reasoning analogous to that of \cite[Lemma~4.2]{BCMS2013} grants that the $t_k$'s can be chosen as
\begin{equation}\label{1493}
t_k \coloneqq \frac{|D^s \bar g|(Q(x_0;\delta_k))}{\delta_k^N},
\end{equation}
with $\delta_k\in\R{}$ a vanishing sequence such that $\bar \theta(\partial Q(x_0;\delta_k))=0$.
Then
\begin{equation*}
\begin{split}
\frac{\de \bar \theta}{\de |D^s g|}(x_0) & = \lim_{k\to \infty}\frac{\bar\theta(Q(x_0;\delta_k))}{|D^s g|(Q(x_0;\delta_k))} = \lim_{k\to\infty}\lim_{r\to0^+} \frac{\bar \theta_r(Q(x_0;\delta_k))}{|D^s g|(Q(x_0;\delta_k))} \\
&= \lim_{k\to\infty}\lim_{r\to0^+} \frac{\displaystyle\int_{Q(x_0;\delta_k)} \Psi\big(x,(\bar\mu*\alpha_r)(x)\big)\,\de x}{|D^s g|(Q(x_0;\delta_k))} \\ 
&= \lim_{k\to\infty}\lim_{r\to0^+}  \frac{1}{t_k} \int_Q \Psi \big(x_0+\delta_k y,(\bar\mu*\alpha_r)(x_0 + \delta_k y)\big)\,\de y
\end{split}
\end{equation*}
where the last equality follows by a change of variables, taking \eqref{1493} into account.
Defining
$$\bar w_{k,r}(y)\coloneqq \frac{(\bar\mu*\alpha_r)(x_0 + \delta_k y)}{t_k},$$
we can continue the chain of equalities above as follows
\begin{equation*}
\begin{split}
\frac{\de \bar\theta}{\de |D^s g|}(x_0)  =  \lim_{k\to\infty}\lim_{r\to0^+} & \frac{1}{t_k} \int_Q \Psi \big(x_0+\delta_k y,(\bar\mu*\alpha_r)(x_0 + \delta_k y)\big)\,\de y \\
= \lim_{k\to\infty}\lim_{r\to0^+} & \frac{1}{t_k} \int_Q \Psi \big(x_0+\delta_k y,t_k \bar w_{k,r}(y)\big)\, \de y\\
\geq  \lim_{k\to\infty}\lim_{r\to0^+} & \bigg[\frac{1}{t_k} \int_Q \Psi \big(x_0,t_k\tau(x_0)\big)\, \de y - L_\Psi \int_Q \big|\bar w_{k,r}(y) - \tau(x_0) \big| \, \de y \\
& -\frac1{t_k}\int_Q \omega(\delta_k|y|)(1+t_k|\tau(x_0)|)\,\de y\bigg] = \Psi^\infty(x_0,\tau(x_0)),
\end{split}
\end{equation*}
where we have used \eqref{S102} and \eqref{S103} and where the last two terms in the square bracket vanish since $\lim_{k\to\infty}\lim_{r\to0^+}\int_Q w_{k,r}(y)\, \de y = \tau(x_0)$ by \eqref{1493} and by the properties of the modulus of continuity $\omega$.
This concludes the proof of \eqref{lowerinterf} and, consequently, of \eqref{1470}.
Combining \eqref{1464} and \eqref{1470} yields a chain of equalities, which is precisely \eqref{1146}. 
The theorem is proved.
%\red{This proves \eqref{T556}, and in turn \eqref{T555}, and therefore \eqref{1146}, which concludes the proof.}
\end{proof}
Recalling \eqref{810}, 
%$g\in$\eqref{1220} in Remark~\ref{tuttiuguali}, 
the limiting energy $I(g,G)$ in \eqref{1146} can be written as
\begin{equation}\label{811}
\begin{split}
I(g,G)= %& \int_\Omega \Psi\big(x,\nabla g(x) - G(x)\big)\,\de x+\int_{\Omega\cap S_g}\Psi^{\infty}\Big(x,\frac{\de D^s g}{\de|D^s g|}(x)\Big)\,\de|D^s g|(x) \\
%=& 
\int_\Omega \Psi\big(x,\nabla g(x) - G(x)\big)\,\de x+\int_{\Omega\cap S_g}\Psi^{\infty}\big(x,[g](x)\otimes\nu_g(x)\big)\,\de\cH^{N-1}(x).
\end{split}
\end{equation}
Moreover, as a particular case of $\Psi$ with sublinear growth, one can consider a bounded $\Psi$. In this case, the formula above reduces  to \eqref{624} (since $\Psi^\infty=0$). %and we observe that the proof of Theorem~\ref{final1E} can be obtained by a simple application of the Lebesgue Dominated Convergence Theorem, in view of \eqref{809}.
\section{Coupling local and non-local energies}\label{together}
In this section we extend the results first proved in the pioneering paper \cite{CF1997} to the case of $x$-dependent energy densities.
The integral representation results \cite[Theorems~ 2.16 and~2.17]{CF1997} are expected to hold with the obvious modifications, namely with the relaxed/upscaled energy densities depending on $x$ as well.
This generalization is somewhat natural and can be obtained with minor modifications to the original proofs, but since it is not presented elsewhere, we highlight here the adaptation of the proofs from \cite{CF1997} for sake of completeness.
\subsection{Relaxation/upscaling of the local energy $E_L$}\label{ChoksiFonseca}
In this subsection we present the relaxation/upscaling results for local energies, like $E_L$ defined in \eqref{1002}, contained in the paper \cite{CF1997}.
We start by introducing the assumptions on the bulk and interfacial energy densities $W$ and $\psi$.
Let $p\geq1$ and let $W\colon \Omega\times\R{d\times N} \to [0, +\infty[$ and $\psi\colon \Omega\times\R{d}\times \S{N-1} \to [0, +\infty[$ be continuous functions satisfying the following conditions 
%\red{check with CF and/or the 2nd order paper -- especially where the quantifiers go}
\begin{itemize}
\item[$(W1)_p$ ]there exists $C >0$ such that, for all $x\in\Omega$ and $A, B \in \R{d\times N}$,
\begin{equation*}
|W(x,A) - W(x,B)| \leq C| A - B| \big(1+|A|^{p-1}+|B|^{p-1}\big)
\end{equation*}
\item[$(W2)$] there exists a continuous function $\omega_W\colon[0,+\infty)\to[0,+\infty)$ with $\omega_W(s)\to 0$ as $s\to0^+$ such that, for every $x,x_0\in\Omega$ and $A\in\R{d\times N}$,
\begin{equation*}
|W(x,A)-W(x_0,A)|\leq \omega_W(|x-x_0|)(1+|A|^p);
\end{equation*}
\item[$(W3)$] there exist $C,T>0$ and $0 < \alpha < 1$ such that, for all $x\in\Omega$ and $A\in\R{d\times N}$ with $|A|=1$, and, if $p=1$,
\begin{equation*}
\bigg |W^{\infty}(x,A) - \frac{W(x,tA)}{t}\bigg| \leq \frac{C}{t^{\alpha}},\qquad\text{for all $t>T$,}
\end{equation*}
where $W^{\infty}$ denotes the \textit{recession function} at infinity of $W$ (with respect to $A$), see \eqref{defrecession3};
%namely%, given $A\in\R{d\times N}$
%\begin{equation}\label{912}
%W^{\infty}(x,A) \coloneqq \limsup_{t\rightarrow +\infty} \frac {W(x,tA)}{t};
%\end{equation}
%don't put the formula, cite the one in Lemma 2.6
\item[$(\psi1)$] there exist $c,C > 0$ such that, for all $x\in\Omega$, $\lambda \in \R{d}$, and $\nu \in \S{N-1}$,
%\begin{itemize}
%\item if $p=1$, then 
$$c|\lambda| \leq \psi(x,\lambda, \nu) \leq C|\lambda |;$$
%\item if $p>1$, then $0 \leq \psi(\lambda, \nu) \leq C|\lambda |$;
%\end{itemize}
%\begin{equation*}
%,\quad\text{if $p=1$;}\qquad ,\quad\text{if $p>1$;}
%\end{equation*}
\item[$(\psi2)$] (positive $1$-homogeneity) for all $x\in\Omega$, $\lambda \in \R{d}$, $\nu \in \S{N-1}$, and $t >0$
%\begin{itemize}
%\item \red{if $p=1$, then 
$$\psi(x,t\lambda, \nu) = t\psi(x, \lambda, \nu),$$
%\item if $p>1$, there exist $C,\tau,\alpha>0$ such that 
%\red{$$\bigg|\psi_0(x,\lambda,\nu)-\frac{\psi(x,t\lambda,\nu)}{t}\bigg|\leq Ct^\alpha,$$}
%for $|\lambda|=1$, $0<t<\tau$, where $\psi_0$ is the positively $1$-homogeneous function given by
%\begin{equation}\label{911}
%\red{\psi_0(x,\lambda,\nu)\coloneqq \limsup_{t\to0^+} \frac{\psi(x,t\lambda,\nu)}{t}. }
%\end{equation}
%(Notice that, if $\psi$ is already positively $1$-homogeneous in the first variable, then $\psi_0=\psi$.)
%\end{itemize}
%\begin{equation*}
%,\quad\text{if $p=1$;}\qquad ,\quad\text{if $p>1$, for ,}
%\end{equation*}
\item[$(\psi3)$] (sub-additivity) for all $x\in\Omega$, $\lambda_1,\lambda_2 \in \R{d}$, and $\nu \in \S{N-1}$,
\begin{equation*}
\psi(x, \lambda_1 + \lambda_2, \nu) \leq \psi(x,\lambda_1, \nu) +\psi(x,\lambda_2, \nu).
\end{equation*}
\item[$(\psi4)$] there exists a continuous function $\omega_\psi\colon[0,+\infty)\to[0,+\infty)$ with $\omega_\psi(s)\to 0$ as $s\to0^+$ such that, for every $x_0\in\Omega$, $\lambda \in \R{d}$, and $\nu \in \S{N-1}$,
\begin{equation*}
|\psi(x,\lambda,\nu)-\psi(x_0,\lambda,\nu)|\leq\omega_\psi(|x-x_0|)|\lambda|.
\end{equation*}
\end{itemize}
Given $W$ and $\psi$ as above, and $u\in SBV(\Omega;\R{d})$, we defined the initial energy $E_L(u)$ as
\begin{equation}\label{901}
E_L(u)\coloneqq \int_\Omega W(x,\nabla u(x))\,\de x + \int_{\Omega\cap S_u} \psi(x,[u](x), \nu_u(x))\,\de\mathcal{H}^{N-1}(x)
\end{equation}
and, given $(g,G)\in SD(\Omega;\R{d}\times\R{d\times N})$, we defined the relaxed energies $I_p(g,G)$ %(for $p\geq1$) 
as
%\begin{equation}\label{902}\begin{split}
% I_1(g,G)\coloneqq \inf_{\{u_n\}\subset SBV(\Omega;\R{d})}\Big\{ & \liminf_{n\to\infty} E(u_n): u_n \to (g,G)\; \text{in the sense of \eqref{1000}},\\
%& \sup_n \|\nabla u_n\|_{L^1(\Omega;\R{d\times N})}<\infty\Big\}\\
%\end{split}
%\end{equation}
%and, for $p>1$,
\begin{equation}\label{910}
\begin{split}
I_p(g,G)\coloneqq \inf_{\{u_n\}\subset SBV(\Omega;\R{d})}\Big\{ \liminf_{n\to\infty} E_L(u_n): &\; \text{$u_n\to(g,G)$ in the sense of \eqref{appCF},} \\
&\; (1-\delta_1(p)) \sup_n \|\nabla u_n\|_{L^p(\Omega;\R{d\times N})}%\|u_n\|_{BV(\Omega;\R{d})}
<\infty\Big\}.
\end{split}
\end{equation}
In the formula above, and in what follows, we use the symbol $\delta_1(p)$ as the Kronecker delta computed at $p$, namely $\delta_1(p)=1$ if $p=1$ and zero otherwise, and use it as a selector between the cases $p=1$ and $p>1$.
In particular, in \eqref{910}, the control on the $L^p$ norm of $|\nabla u_n|$ does not appear in the formula if $p=1$, since in that case $1-\delta_1(p)=0$.
We introduce now the classes of competitors for the cell formulae for the relaxed/upscaled bulk and surface energy densities.
For $A,B\in\R{d\times N}$ let
\begin{equation}\label{T001}
\cC_p^{\bulk}(A,B)\coloneqq \bigg\{u\in SBV(Q;\R{d}): u|_{\partial Q}(x)=Ax, \int_Q \nabla u\,\de x=B, |\nabla u|\in L^p(Q) \bigg\} 
\end{equation}
and for $\lambda\in\R{d}$ and $\nu\in\S{N-1}$ let
\begin{equation*}%\label{T002}
\cC_p^\surface(\lambda,\nu)\coloneqq \Big\{u\in SBV(Q_\nu;\R{d}): u|_{\partial Q_\nu}(x)=u_{\lambda,\nu}(x), \delta_1(p)\fC_1(u)+(1-\delta_1(p))\fC(u)\Big\},
\end{equation*}
where the function $u_{\lambda,\nu}$ is defined by
\begin{equation*}%\label{909}
u_{\lambda,\nu}(x)\coloneqq 
\begin{cases}
\lambda & \text{if $x\cdot\nu\geq0$,} \\
0 & \text{if $x\cdot\nu<0$,}
\end{cases}
\end{equation*}
and the conditions $\fC_1(u)$ and $\fC(u)$ are
\begin{equation}\label{T003}
\fC_1(u) \Longleftrightarrow \int_Q \nabla u\,\de x=0\qquad\text{and}\qquad \fC(u) \Longleftrightarrow \nabla u(x)=0\;\text{for $\cL^N$-a.e.~$x\in Q_\nu$}.
\end{equation}
We state now the integral representation theorem for the relaxed/upscaled energies $I_p$ defined in \eqref{910}.
It generalizes the results contained in \cite[Theorems~2.16 and~2.17]{CF1997} to the inhomogeneous case considered here.
For the sake of being concise, we give a unified statement through the use of the selector $\delta_1(p)$, which takes into account the different nuances between the case $p=1$ and the case $p>1$. 
Note that the formulae for the relaxed energy densities $H_p$ and $h_p$ are obtained via the blow-up method \cite{BBBF1996,FM1992,FM1993} and involve the contributions of both $W$ and $\psi$ for $H_p$, and of $\psi$ and possibly $W^\infty$ for $h_p$.
\begin{theorem}\label{903}
Let $p\geq1$ and let $W\colon \Omega\times\R{d\times N} \to [0, +\infty[$ and $\psi\colon \Omega\times\R{d}\times \S{N-1} \to [0, +\infty[$ be continuous functions satisfying hypotheses $(W1)_p$, $(W2)$, $\delta_1(p)(W3)$, $(\psi1)$, $(\psi2)$, $(\psi3)$, and $(\psi4)$; let $(g,G)\in SD(\Omega;\R{d}\times\R{d\times N})$ with $G\in L^p(\Omega;\R{d\times N})$ and let $I_p(g,G)$ be given by \eqref{910}.
Then there exist $H_p\colon\Omega\times \R{d\times N}\times \R{d\times N}\to[0,+\infty)$ and $h_p\colon\Omega\times\R{d}\times\S{N-1}\to[0,+\infty)$ such that 
\begin{equation}\label{904}
I_p(g,G)=\int_\Omega H_p(x,\nabla g(x),G(x))\,\de x+\int_{\Omega\cap S_g} h_p(x,[g](x),\nu_g(x))\,\de\cH^{N-1}(x).
\end{equation}
For all $x_0\in\Omega$ and $A,B\in\R{d\times N}$,
%\red{
%\begin{equation}\label{905}
%\begin{split}
%H_1(x,A,B)\coloneqq \inf_{u\in SBV(Q;\R{d})}\bigg\{ & \int_Q W(x,\nabla u)\,\de x+\int_{S(u)\cap Q} \psi(x,[u],\nu_u)\,\de\cH^{N-1}: \\
%& u|_{\partial Q}=Ax,\int_Q \nabla u\,\de x=B\bigg\} 
%\end{split}
%\end{equation}
%}
%and, for $p>1$,
\begin{equation}\label{906}
%\begin{split}
H_p(x_0,A,B)\coloneqq \inf\bigg\{ %& 
\int_Q W(x_0,\nabla u(x))\,\de x+\int_{Q\cap S_u} \psi(x_0,[u](x),\nu_u(x))\,\de\cH^{N-1}(x): %\\
%& u|_{\partial Q}=Ax, |\nabla u|\in L^p(Q), \int_Q \nabla u\,\de x=B
u\in\cC_p^\bulk(A,B)\bigg\};
%\end{split}
\end{equation}
%with $\psi_0$ defined in \eqref{911}. \\
for all $x_0\in\Omega$, $\lambda\in\R{d}$, and $\nu\in\S{N-1}$,
\begin{equation}\label{907}
%\begin{split}
\!\!\! h_p(x_0,\lambda,\nu)\coloneqq \inf\bigg\{ %& 
\delta_1(p) \!\! \int_{Q_\nu} \!\!\!\! W^\infty(x_0,\nabla u(x))\,\de x+ \! \int_{Q_\nu\cap S_u} \!\!\!\!\!\!\!\!\! \psi(x_0,[u](x),\nu_u(x))\,\de\cH^{N-1}(x): %\\
%& u|_{\partial Q_\nu}=u_{\lambda,\nu},\int_{Q_\nu} \nabla u\,\de x=0
u\in\cC_p^\surface(\lambda,\nu)\bigg\},
%\end{split}
\end{equation}
with $W^\infty$ defined in \eqref{defrecession3}. %, and, for $p>1$,
%\red{
%\begin{equation}\label{908}
%h_p(x,\lambda,\nu)\coloneqq \inf_{u\in SBV(Q_\nu;\R{d})}\bigg\{\int_{S(u)\cap Q_\nu} \psi(x,[u],\nu_u)\,\de\cH^{N-1}: u|_{\partial Q_\nu}=u_{\lambda,\nu}, \nabla u=0\;\cL^N\text{-a.e.}\bigg\},
%\end{equation}
%}
\end{theorem}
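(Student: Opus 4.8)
The plan is to adapt the blow-up method of Fonseca--Müller, as used in \cite{CF1997,FM1992,FM1993}, to the present inhomogeneous setting, paying attention to the uniform continuity estimates $(W2)$ and $(\psi4)$ that control the dependence on $x$. First I would establish a lower bound: given a sequence $u_n$ admissible for $I_p(g,G)$, one passes to the limit measures $E_L(u_n) \ge \mu_n \wsto \mu$, decomposes $\mu$ with respect to $\cL^N$ and $\cH^{N-1}\res S_g$, and uses the Besicovitch derivation theorem to reduce to two local problems. At a Lebesgue point $x_0$ of $(\nabla g, G)$ one blows up on small cubes $Q(x_0;\delta)$, rescales, and freezes the $x$-dependence at $x_0$ using $(W2)$ and $(\psi4)$, so that the rescaled functions become competitors (approximately) for the cell formula \eqref{906}; this yields $\de\mu/\de\cL^N(x_0) \ge H_p(x_0,\nabla g(x_0),G(x_0))$. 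At a point $x_0 \in S_g$ one blows up analogously on cubes oriented by $\nu_g(x_0)$, the rescaled boundary datum converges to $u_{[g](x_0),\nu_g(x_0)}$, and — only when $p=1$, where the gradient constraint is relaxed to a zero-average constraint — the bulk part of the energy contributes through $W^\infty$ via the recession estimate $(W3)$; this gives $\de\mu/\de(\cH^{N-1}\res S_g)(x_0) \ge h_p(x_0,[g](x_0),\nu_g(x_0))$.

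For the upper bound, I would construct near-optimal recovery sequences. Fix $x_0$ and $\varepsilon>0$ and pick $u \in \cC_p^{\bulk}(\nabla g(x_0),G(x_0))$ (resp. $u \in \cC_p^{\surface}([g](x_0),\nu_g(x_0))$) nearly attaining the infimum in \eqref{906} (resp. \eqref{907}); then extend $u$ by $Q$-periodicity (resp. by the boundary datum across a slab), rescale to cubes of size $1/k$ tiling a neighbourhood, and glue these local profiles together over a Vitali/dyadic partition of $\Omega$ adapted to $\nabla g$, $G$, $[g]$ and $\nu_g$, correcting on thin transition layers whose energy contribution is negligible. The continuity hypotheses $(W1)_p$, $(W2)$, $(\psi1)$--$(\psi4)$ guarantee that replacing the frozen densities $W(x_0,\cdot), \psi(x_0,\cdot,\cdot)$ by the true $W(x,\cdot),\psi(x,\cdot,\cdot)$ changes the energy by at most an $\varepsilon$-order term. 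A standard diagonalization then produces an admissible sequence with $\limsup_n E_L(u_n) \le \int_\Omega H_p(x,\nabla g,G)\,\de x + \int_{\Omega\cap S_g} h_p(x,[g],\nu_g)\,\de\cH^{N-1}$, matching the lower bound.

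The measurability and (Carathéodory-type) regularity of the densities $H_p$ and $h_p$ defined by the cell formulae must also be checked, so that the right-hand side of \eqref{904} is a well-defined integral; this follows as in \cite{CF1997} from the continuity of $W,\psi$ in $x$ together with the uniform moduli $\omega_W,\omega_\psi$, which transfer to $H_p,h_p$. One also verifies the growth bounds (linear in $\lambda$ for $h_p$, at most $p$-growth for $H_p$) needed to apply the $SBV$ compactness and the blow-up argument, using $(\psi1)$ and $(W1)_p$.

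The main obstacle I expect is the surface lower bound in the case $p=1$: there the competitor class $\cC_1^{\surface}$ only imposes a zero-average constraint on $\nabla u$, so the blown-up sequences need not have vanishing absolutely continuous gradient, and one must carefully track how the bulk energy $W$ contributes in the limit through its recession function $W^\infty$ — this is exactly where hypothesis $(W3)$, the quantitative rate of convergence $W(x,tA)/t \to W^\infty(x,A)$, is used, and where the interplay with the inhomogeneity in $x$ is most delicate. The gluing construction for the upper bound, ensuring the transition-layer energy is controlled uniformly in $x$, is the second technically demanding point, but it is by now routine given the stated hypotheses.
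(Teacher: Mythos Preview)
Your proposal is correct and follows essentially the same approach as the paper: the paper gives only a sketch, stating that \eqref{904} is obtained via the blow-up method of \cite{BBBF1996,FM1992,FM1993} to show that \eqref{906} and \eqref{907} are upper and lower bounds for the Radon--Nikod\'ym derivatives of the relevant limit measures, and that the $x$-dependence is handled by freezing at $x_0$ using the moduli of continuity $\omega_W$ and $\omega_\psi$, exactly as you describe. Your outline is in fact more detailed than the paper's sketch, but it is entirely consistent with it; the paper also points to \cite{BMMO2017} as a place where the same $x$-freezing strategy was carried out in full.
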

\begin{remark}\label{rmkCF}
Theorem~\ref{903} collects the content of Theorems~2.16 and 2.17 %and Remark~3.3 
in \cite{CF1997} in a compact form.
In particular, the form of the integral representation of the relaxed/upscaled energies %\eqref{902} and 
\eqref{910} provided by formula \eqref{904} is structurally the same both for $p=1$ and for $p>1$: it features a bulk energy and an interfacial energy.
We make the following observations.
\begin{itemize}
\item The condition $|\nabla u|\in L^p(Q)$ in \eqref{T001} is redundant if $p=1$ (see \cite[Remark~2.15]{CF1997});
\end{itemize}
%Moreover, we point out the following differences in the hypotheses and conclusions, depending on whether $p=1$ or $p>1$.
\begin{itemize}
\item If $p=1$, hypothesis $(W3)$ is required and %, $(\psi1)$, $(\psi2)$, and $(\psi3)$ imply the integral representation \eqref{904} with the densities given by the cell formulas \eqref{905} and \eqref{907}.
%In particular, 
we notice that in formula \eqref{907} the recession function at infinity $W^\infty$ defined in \eqref{defrecession3} appears, to account for concentration phenomena arising when taking the limit of functions in $L^1$.
%\item If $p>1$, the positive $1$-homogeneity of $\psi$ can be relaxed to $(\psi2)_p$, which, together with $(W1)_p$ and $(\psi3)$, allows one to establish the integral representation \eqref{904} with the densities given by the cell formulae \eqref{906} and \eqref{908}.
%In particular, we notice that in formula \eqref{906} the function $\psi_0$ defined in \eqref{911} appears.
\item In \eqref{T003}, condition $\fC_1$ contains condition $\fC$, so that, for every $\lambda\in\R{d}$ and $\nu\in\S{N-1}$, we have the inclusion $\cC_{p>1}^\surface(\lambda,\nu)\subset\cC_1^\surface(\lambda,\nu)$. %The class of admissible functions in \eqref{907} includes the class of admissible functions in \eqref{908}.
\item The cell formula \eqref{907} for $p>1$ corrects formula (2.17) in \cite{CF1997}, where the dependence on the normal $\nu$ was mistakenly omitted, as already noted in \cite[Theorem~3]{OP2015} and \cite[formula~(4)]{S17}.
\end{itemize}
We point out the following final remarks.
\begin{itemize}
\item Hypothesis $(W1)_p$ could be strengthened to include coercivity ($p$-growth from below). Although this would be a strong restriction from the mechanical point of view, it would make the proofs easier. We refer the reader to \cite[Step~1 in the proof of Proposition~2.22]{CF1997} for a discussion on this.
\item If $p>1$, hypotheses $(\psi1)$ and $(\psi2)$ %$also the linear growth from below of $\psi$ in hypothesis $(\psi1)$ 
can be relaxed. We refer the reader to \cite[Remark~3.3]{CF1997} for a discussion on this. %to $0 \leq \psi(\lambda, \nu) \leq C|\lambda |$.
%We did not state Theorem~\ref{903} in this full generality because we made an essential use of the linear growth from below of $\psi$ in the proof of our Theorem~\ref{main}.
\item If $p>1$, Theorem~\ref{903} provides a representation of the relaxed/upscaled energy density $I_p(g,G)$ only in the case $G\in L^p(\Omega;\R{d\times N})$ (see again \cite[Remark~2.15]{CF1997}).
\end{itemize}
These final remarks pave the way for a statement of Theorem~\ref{903} under the minimal set of hypotheses.
\qed
\end{remark}
\begin{proof}[Sketch of the proof of Theorem~\ref{903}]
Formula \eqref{904} is obtained by using the blow-up method \cite{BBBF1996,FM1992,FM1993} to prove that the energy densities \eqref{906} and \eqref{907} provide upper and lower bound for the Radon-Nikod\'ym derivatives of suitable measures associated with $I_p(g,G)$ with respect to $\cL^N$ and $|[g]|\cH^{N-1}\res S_g$.
The dependence on $x$ is not involved in this process, and the existence of the moduli of continuity $\omega_W$ and $\omega_\psi$ is a strong enough assumption to estimate the error when passing from the evaluation of the energy densities at generic $x\in Q(x_0,\delta)$ to the evaluation at $x_0$. A similar strategy was undertaken in \cite{BMMO2017}, in the spirit of \cite{BBBF1996}.
\end{proof}
We remark that Theorem~\ref{903}  does not address effects such as the bending due to jumps in $\nabla u_n$, that are captured by second-order structured deformations \cite{BMMO2017,FHP,OP2000}.
\subsection{Relaxation/upscaling of the total energy $E_L+E^{\alpha_r}$}
%\note{E: the reference to  the adopted convergence in the definition of the functionals is missing. we have to see which one to refer to}
We now address the relaxation/upscaling of the total energy including both the local initial energy $E_L$ and the non-local initial energy $E^{\alpha_r}$. 
By Remark~\ref{coupling} below we can perform the relaxation/upscaling of $E_L$ and the upscaling of $E^{\alpha_r}$ as two separate processes.
%begin{lemma}\label{609}
\begin{remark}\label{coupling}
We note that given  $(g,G)\in SD(\Omega;\R{d}\times\R{d\times N})$, any sequence of functions $u_n\in SBV(\Omega;\R{d})$ admissible for the relaxation/upscaling process of Theorem~\ref{903}, such that $\liminf_{n\to\infty} E_L(u_n)$ is finite,  belongs to $\Ad(g,G)$. %In particular, $u_n\sSD(g,G)$ as $n\to\infty$. 
In fact, by \eqref{910}, $u_n$ converges to $(g,G)$ in the sense of \eqref{appCF}, thus providing a uniform bound on the $L^1$ norm of $u_n$ and on the $L^p$ norm of $\nabla u_n$. Then, the coercivity of $\psi$ in $(\psi1)$ ensures that $|D^s u_n|$ is also a unifomly bounded sequence of measures, hence $D u_n$ %any \blu{infimizing} sequence in \eqref{910} %admissible
%for the relaxation process of Theorem~\ref{903} 
is bounded in total variation and therefore has a weakly-* converging subsequence (not relabelled) such that $D^s u_n \wsto (\nabla g - G)\cL^N + D^s g$, that is, \eqref{centerline} holds true. Moreover, by the metrisability on compact sets of the weak-* convergence, (see \cite[Remark~1.57, Theorem~1.59, and subsequent comments]{AFP}) and Urysohn's principle, the whole sequence $u_n$ belongs to $\Ad(g,G)$.
Moreover, the collection of such sequences $u_n$ is non-empty, ad the infimum over such $u_n$ of $\liminf_{n\to\infty} E_L(u_n)$ equals $I_L(g,G)$.
\qed
\end{remark}
\begin{theorem}\label{main2}
Under the conditions of Theorem~\ref{mainrho} and Theorem~\ref{903}, the relaxation/upscaling \eqref{1037} of the initial energy \eqref{1036} admits the integral representation \eqref{1038}, where, for any $(g,G)\in SD(\Omega;\R{d}\times\R{d\times N})$, the relaxed/upscaled energy $I_L(g,G)$ of the local initial energy $E_L$ in \eqref{1002} is given by \eqref{904} and the upscaled energy $I^{\alpha_r}(g,G;\Omega_r)$ of the non-local initial energy $E^{\alpha_r}$ is provided by Theorem~\ref{mainrho}.
In particular, 
%\begin{itemize}
%\item[(i)] under the hypotheses of Theorem~\ref{mainrho}(i), using \eqref{1015}, we have the formula
\begin{equation}\label{623}
\begin{split}
J^{\alpha_r}(g,G) = & \int_\Omega H_p(x,\nabla g(x),G(x))\,\de x + \int_{\Omega\cap S_g} h_p(x,[g](x),\nu_g(x))\,\de\cH^{N-1}(x) \\
& +\int_{\Omega_r} \Psi\big(x,((\nabla g-G)\ast \alpha_r)(x)+(D^s g\ast\alpha_r)(x)\big) \,\de x.
\end{split}
\end{equation}
%\item[(ii)] under the hypotheses of Theorem~\ref{mainrho}(ii), using \eqref{1202}, we have the formula
%\begin{equation}\label{623a}
%\begin{split}
%\widetilde J^{\alpha_r}(g,G) = & \int_\Omega H(x,\nabla g(x), G(x))\,\de x + \int_{\Omega\cap S_g} h(x,[g](x),\nu_g(x))\,\de\cH^{N-1}(x) \\
%& +\int_{\Omega} \Psi\big(x,((\nabla g-G)\ast \alpha_r)(x)+(D^s g\ast\alpha_r)(x)\big) \,\de x.
%\end{split}
%\end{equation}
%\end{itemize}
\end{theorem}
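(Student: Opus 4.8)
The strategy is to prove the decoupling identity \eqref{1038}, $J^{\alpha_r}(g,G)=I_L(g,G)+I^{\alpha_r}(g,G;\Omega_r)$, by a double inequality; formula \eqref{623} then follows at once by substituting the integral representation \eqref{904} for $I_L(g,G)$ from Theorem~\ref{903} and the expression \eqref{864} for $I^{\alpha_r}(g,G;\Omega_r)$ from Theorem~\ref{mainrho}, the latter rewritten through the decomposition \eqref{1204} of $\mu$. Before starting one may reduce to the case $I_L(g,G)<\infty$ (otherwise both sides of \eqref{1038} are $+\infty$), and record that $I^{\alpha_r}(g,G;\Omega_r)$ is always finite, since the argument of $\Psi$ in \eqref{864} is bounded by a constant times $\|(g,G)\|_{SD}$ (exactly as in the proof of Theorem~\ref{mainrho}) while $\Omega_r$ has finite measure; this finiteness is what will let us split limits additively.

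The decisive input is Remark~\ref{coupling}: any sequence $u_n\in SBV(\Omega;\R{d})$ admissible for the relaxation/upscaling \eqref{1037} along which $E_L(u_n)$ stays bounded lies in the admissible class $\Ad(g,G)$ of \eqref{Addef}, and the infimum of $\liminf_n E_L(u_n)$ over sequences admissible for \eqref{1037} equals $I_L(g,G)$. Once $u_n\in\Ad(g,G)$, Theorem~\ref{mainrho} gives $\lim_n E^{\alpha_r}(u_n)=I^{\alpha_r}(g,G;\Omega_r)$, the \emph{same} value for every admissible sequence. Everything else is bookkeeping with subsequences, together with the elementary rule $\liminf_n(a_n+b_n)=\liminf_n a_n+\lim_n b_n$, valid when $(b_n)$ converges to a finite limit.

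For the lower bound $J^{\alpha_r}(g,G)\geq I_L(g,G)+I^{\alpha_r}(g,G;\Omega_r)$ I would take an arbitrary $u_n$ admissible for \eqref{1037}; if $\liminf_n F^{\alpha_r}(u_n)=+\infty$ there is nothing to prove, so assume it is finite and extract a subsequence (not relabelled) along which $F^{\alpha_r}(u_n)$ converges to $\liminf_n F^{\alpha_r}(u_n)$. Since $E^{\alpha_r}\geq0$, the bound $E_L(u_n)\leq F^{\alpha_r}(u_n)$ keeps $E_L(u_n)$ bounded along this subsequence, which therefore lies in $\Ad(g,G)$ by Remark~\ref{coupling}; hence $E^{\alpha_r}(u_n)\to I^{\alpha_r}(g,G;\Omega_r)$ by Theorem~\ref{mainrho}, and the additivity rule yields
\begin{equation*}
\liminf_n F^{\alpha_r}(u_n)=\liminf_n E_L(u_n)+I^{\alpha_r}(g,G;\Omega_r)\geq I_L(g,G)+I^{\alpha_r}(g,G;\Omega_r),
\end{equation*}
using that the subsequence is still admissible for the infimum defining $I_L(g,G)$. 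Taking the infimum over all admissible $u_n$ gives the lower bound. For the reverse inequality, fix $\eps>0$, use the definition of $I_L(g,G)$ to pick an admissible sequence $u_n$ with $\liminf_n E_L(u_n)\leq I_L(g,G)+\eps$, pass to a subsequence along which $E_L(u_n)$ converges to this $\liminf$ (hence stays bounded), and invoke Remark~\ref{coupling} and Theorem~\ref{mainrho} once more to get $E^{\alpha_r}(u_n)\to I^{\alpha_r}(g,G;\Omega_r)$; then
\begin{equation*}
J^{\alpha_r}(g,G)\leq\liminf_n F^{\alpha_r}(u_n)=\lim_n E_L(u_n)+I^{\alpha_r}(g,G;\Omega_r)\leq I_L(g,G)+\eps+I^{\alpha_r}(g,G;\Omega_r),
\end{equation*}
since this subsequence is admissible for \eqref{1037}. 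Letting $\eps\to0^+$ gives \eqref{1038}, and hence \eqref{623}.

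I do not anticipate a genuine obstacle: the two processes decouple precisely because Remark~\ref{coupling} shows that adjoining the nonnegative non-local term $E^{\alpha_r}$ does not alter the effective class of energy-minimizing sequences, while Theorem~\ref{mainrho} shows that $E^{\alpha_r}$ is, in the upscaling limit, insensitive to which admissible sequence is used. The only care needed is the routine passage to subsequences realizing the various $\liminf$s, the trivial reduction to $I_L(g,G)<\infty$, and the verification that $I^{\alpha_r}(g,G;\Omega_r)<\infty$, so that the additivity of $\liminf$ applies without ambiguity.
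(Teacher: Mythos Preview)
Your proof is correct and follows exactly the approach the paper sketches: invoke Theorem~\ref{mainrho}, Theorem~\ref{903}, and Remark~\ref{coupling}, together with the additivity of $\liminf$ when one summand has a genuine limit. The paper's own proof is a single sentence citing these same ingredients plus ``the superadditivity properties of the $\liminf$''; you have simply filled in the routine double-inequality bookkeeping and the subsequence extractions that make that sentence honest.
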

\begin{proof}
The representation formula \eqref{623} is an immediate consequence of Theorem~\ref{mainrho}, Theorem~\ref{903}, Remark ~\ref{coupling}, and the superadditivity properties of the $\liminf$.
%In case (ii), Lemma~\ref{609} yields that the recovery sequences for the relaxation of Theorem~\ref{903} indeed converge in the $\sSD$ sense, therefore, they are admissible for the relaxation process of Theorem~\ref{mainrho}(ii). Invoking again the superadditivity properties of the $\liminf$ allows us to obtain \eqref{623a}.
\end{proof}
\begin{corollary}\label{finalfinal}
Under the conditions of Theorem~\ref{final1E} and Theorem~\ref{main2}, for any $(g,G)\in SD(\Omega;\R{d}\times\R{d\times N})$, the functional $J(g,G)$ defined in \eqref{1039} admits the integral representation in \eqref{1040}, namely, 
%recalling the expression \eqref{811} for $I(g,G)$ and the expression \eqref{904} for $I_L(g,G)$,
\begin{equation}\label{812}
\begin{split}
J(g,G) = &  \int_\Omega H_p(x,\nabla g(x),G(x))\,\de x + \int_{\Omega\cap S_g} h_p(x,[g](x),\nu_g(x))\,\de\cH^{N-1}(x) \\
& + \int_\Omega \Psi\big(x,\nabla g(x) - G(x)\big)\,\de x+
\int_{\Omega\cap S_g} \Psi^{\infty}\big(x,[g](x)\otimes \nu_g(x) \big)\,\de\cH^{N-1}(x). 
\end{split}
\end{equation}
\end{corollary}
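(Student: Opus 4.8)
The plan is to assemble the statement from results already established, since no new analytic input is required. First I would invoke Theorem~\ref{main2}, which for every fixed $r>0$ provides the additive splitting \eqref{1038},
\[
J^{\alpha_r}(g,G)=I_L(g,G)+I^{\alpha_r}(g,G;\Omega_r),
\]
with the local contribution $I_L(g,G)$ represented by \eqref{904} through the upscaled densities $H_p$ and $h_p$ furnished by Theorem~\ref{903}. The key structural observation is that $I_L(g,G)$ does not depend on the horizon $r$: it involves only the limiting densities $H_p,h_p$ evaluated along the fixed structured deformation $(g,G)$.

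Next, using the definition \eqref{1039} of $J$ and the fact that the first summand above is constant in $r$, I would pass to the limit $r\to0^+$ to obtain
\[
J(g,G)=\lim_{r\to0^+}J^{\alpha_r}(g,G)=I_L(g,G)+\lim_{r\to0^+}I^{\alpha_r}(g,G;\Omega_r).
\]
By \eqref{1013} together with Theorem~\ref{final1E}, the remaining limit exists and equals $I(g,G)$, whose explicit form is \eqref{1146}; by the $SBV$ structure theorem \eqref{810} and the positive $1$-homogeneity of the recession function $\Psi^{\infty}$, this can equally be written in the form \eqref{811}. Substituting the representation \eqref{904} for $I_L(g,G)$ and the expression \eqref{811} for $I(g,G)$ into $J(g,G)=I_L(g,G)+I(g,G)$ then produces exactly \eqref{812}, which is the asserted integral representation and simultaneously establishes \eqref{1040}.

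I do not anticipate a genuine obstacle here: the substantive work is contained in Theorems~\ref{mainrho}, \ref{final1E}, \ref{903}, and \ref{main2}, so the only points deserving an explicit sentence are that the limit in $r$ distributes over the sum because the local term is $r$-independent, and that the two displays \eqref{1146} and \eqref{811} for the spatially localized non-local energy coincide, which follows from \eqref{810} together with $\Psi^{\infty}(x,t\xi)=t\,\Psi^{\infty}(x,\xi)$ for $t>0$ and $|[g](x)\otimes\nu_g(x)|=|[g](x)|$.
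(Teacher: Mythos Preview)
Your proposal is correct and follows exactly the paper's approach: the paper's proof is the single sentence ``The result follows immediately by Theorem~\ref{final1E} and Theorem~\ref{main2},'' and your write-up merely unpacks that sentence by noting the $r$-independence of $I_L$, passing to the limit via \eqref{1039} and \eqref{1013}, and rewriting \eqref{1146} as \eqref{811}.
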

\begin{proof}
The result follows immediately by Theorem~\ref{final1E} and Theorem~\ref{main2}. 
\end{proof}
As noticed in the last bullet of Remark~\ref{rmkCF}, if $p>1$ Theorem~\ref{main2} and Corollary~\ref{finalfinal} provide integral representation results only for fields $G\in L^p(\Omega;\R{d\times N})$.
\subsection{On the reverse order of the limits}\label{reversed}
After the presentation of the iterated limiting procedure carried out in Sections~\ref{section:statement of the problem} and~\ref{rto0}, a legitimate question is whether the two operations commute, namely, whether we obtain the same result if we reverse the order in which the two limits are taken: first letting $r\to0^+$ and then letting $n\to\infty$.
The problem is a relevant one in the scientific community 
and a similar question was studied in \cite{CMMO} for a problem of dimension reduction in the context of structured deformations.
In the following few lines, we will give a brief explanation of why in the present case a commutability result does not hold.
Under the hypotheses of the previous sections on $W$, $\psi$, and $\Psi$,
let us consider the reversed iterated limiting procedure for an initial energy of the type $E_L+E^{\alpha_r}$, with $E_L$ as in \eqref{901} and $E^{\alpha_r}$ as in \eqref{1009}.
We first let the measure of non-locality tend to zero and then relax/upscale to structured deformations, namely we consider, for $u\in SBV(\Omega;\R{d})$
\begin{equation}\label{T100}
I_L(u)\coloneqq \lim_{r\to0} \big( E_L(u)+E^{\alpha_r}(u) \big)
\end{equation}
and then we relax/upscale this energy as in \eqref{910}, for $(g,G)\in SD(\Omega;\R{d}\times\R{d\times N})$:
\begin{equation}\label{T101}
\begin{split}
I_p^{(R)}(g,G)\coloneqq \inf_{\{u_n\}\subset SBV(\Omega;\R{d})}\Big\{ \liminf_{n\to\infty} I_L(u_n): &\; \text{$u_n\to(g,G)$ in the sense of \eqref{appCF},} \\
&\; (1-\delta_1(p)) \sup_n \|\nabla u_n\|_{L^p(\Omega;\R{d\times N})} 
<\infty\Big\}.
\end{split}
\end{equation}
Given that $E_L$ is independent of $r$, it is easy to deal with \eqref{T100}.
Since $\Psi$ belongs to the class (E) or (L), an application of the Reshetnyak Continuity Theorem~\ref{ReshetnyaktheoremKR} with $\Phi=\Psi$ gives
\begin{equation*}%\label{T105}
\begin{split}
\lim_{r\to0} E^{\alpha_r}(u)= &
\lim_{r\to 0} \int_\Omega \Psi \big(x, (\alpha_r \ast D^s u)(x)\big) \,\de x =\int_\Omega \Psi (x, 0)\,\de x + \int_{\Omega \cap S_{u}} \Psi^\infty \Big( x, \frac{\de D^su}{\de |D^s u|}(x)\Big)\,\de |D^s u|(x) \\
=&  \int_\Omega \Psi (x, 0)\,\de x + \int_{\Omega \cap S_{u}} \Psi^\infty \big( x, [u](x)\otimes\nu_u(x)\big)\,\de \cH^{N-1}(x).
\end{split}
\end{equation*}
The chain of equalities above is justified upon extending the function $u$ outside of $\Omega$, as it was done in Section~\ref{rto0} for the function $g$ (through the application of \cite[Theorem~1.4]{Gerhardt}), and recalling that the energy does not depend on the chosen extension in the limit as $r\to0$ (see also \eqref{chain}).
Therefore, in \eqref{T100} we obtain
\begin{equation*}%\label{T103}
I_L(u)=E_L(u)+\int _\Omega \Psi (x, 0)\,\de x + \int_{\Omega \cap S_{u}} \Psi^\infty \big( x, [u](x)\otimes\nu_u(x)\big)\,\de \cH^{N-1}(x).
\end{equation*}
Now, it is easy to prove that if $\Psi$ is in the class (L), then also $\Psi^\infty$ is in the class (L). 
Therefore, for either (i) $\Psi$ belonging to (L) or (ii) $\Psi$ belonging to (E) with $\Psi^\infty$ Lipschitz in the second variable uniformly with respect to the first one, it is immediate to see that $\Psi^\infty$ is a surface energy density that satisfies hypotheses $(\psi1)$, $(\psi2)$, and $(\psi3)$ (see \cite[Remark 3.3]{CF1997} and \cite[Remark 3.1]{MMZ}). 
Thus, the relaxation/upscaling process \eqref{T101} is the same as that of Theorem~\ref{903} for a local energy of the type \eqref{901} whose densities are $\widetilde W(x,A)\coloneqq W(x,A)+\Psi(x,0)$ and $\widetilde\psi(x,\lambda,\nu)\coloneqq\psi(x,\lambda,\nu)+\Psi^\infty(x,\lambda\otimes\nu)$.
The cell formulas \eqref{906} and \eqref{907} imply that only the behavior of $\Psi(x,A)$ at $A=0$ or as $|A|\to\infty$ can influence the relaxed/upscaled energy in \eqref{T101}, whereas the presence of the third integral in \eqref{812} shows that all of the values of $\Psi(x,A)$ can influence the relaxed/upscaled energy $J(g,G)$ in \eqref{812}.
\subsection{Bulk relaxed densities of the form %$W(x,G(x))+\Psi (x,\nabla g(x)-G(x))$}
$F_1(x,G(x))+F_2(x,\nabla g(x)-G(x))$}
\label{sect7}
The representation \eqref{812} of the relaxed energy $J(g,G)$ established in Corollary~\ref{finalfinal} contains the bulk part 
$$
\int_{\Omega } \big(H_p(x,\nabla g(x),G(x))+\Psi (x,\nabla g(x)-G(x))\big)\,\de x,
$$
in which the bulk relaxed density is a sum of the contribution $H_p(x,\nabla g(x),G(x))$ from the initial local energy $E_L(u) $ in  \eqref{901} and the contribution $\Psi (x,\nabla g(x)-G(x))$ from the initial non-local energy $E^{\alpha _{r}}$ in  \eqref{1011}. 
The second term $\Psi (x,\nabla g(x)-G(x))$ has the distinction of capturing a bulk energy density due to disarrangements alone through its sole dependence on the deformation due to disarrangements $M(x)=\nabla g(x)-G(x)$, while the first term $H(x,\nabla g(x),G(x))$ can be written as $H(x,G(x)+M(x),G(x))$ and so depends in general on both the deformation due to disarrangements $M(x)$ and the deformation without disarrangements $G(x)$.
This situation leads naturally to the question of finding conditions on the initial local energy $E_L(u)$ that imply that the term $H_p(x,G(x)+M(x),G(x))$ depends on $G(x)$ alone or, more generally, that 
\begin{equation}\label{decomposition of H}
H_p(x,G(x)+M(x),G(x))=H_{\backslash}(x,G(x))+H_{d}(x,M(x)),
\end{equation}
in which case the bulk relaxed density becomes
\begin{equation}\label{desired decomposition}
H_p(x,G(x)+M(x),G(x))+\Psi (x,M(x))=H_{\backslash}(x,G(x))+(H_{d}(x,M(x))+\Psi (x,M(x))),  
\end{equation}%
a function $H_{\backslash}$ of deformation without disarrangements  plus a function $H_{d}+\Psi$ of deformation due to disarrangements.
The existence of a decomposition of the form \eqref{decomposition of H} was raised in \cite{CF1997} and  \cite{CDPFO1999} and was shown not to be available, in general, in the study \cite{L2000}. 
A modified form of \eqref{decomposition of H} was established in \cite{BMS2012}: there the $x$-dependence was absent, and the term $H_{\backslash}(x,G(x))$ in \eqref{desired decomposition} was replaced by $H_{\backslash}(G(x),\nabla G(x))$. 
In the context of plasticity addressed in the articles \cite{CDPFO1999,DP2018,DO2000}, the availability of \eqref{decomposition of H} was shown to provide a variational basis for describing and predicting the phenomena of yielding, hysteresis, and hardening observed in both single crystals and in polycrystalline materials. 
In this subsection we verify that conditions on the initial local energy $E_{L}(u)$ in \eqref{901} that were identified in \cite[pages~100-101]{CF1997} guarantee the validity of the special additive decomposition \eqref{decomposition of H} for the bulk relaxed/upscaled energy density $H_{1}$ and provide explicit formulas for the functions $H_{\backslash}$ and $H_{d}$ in that decomposition. 
Because no proof of the special additive decomposition \eqref{decomposition of H} is given in \cite{CF1997}, we provide a proof in the context of the following remark that employs the recent results in \cite{S17}. 
In the following remark and in its proof, $A$, $B$, and $A-B$ play the roles of the values of the fields $\nabla g$, $G$, and $M$, respectively.
\begin{remark}\label{remark1}
For the initial local energy $E_{L}(u)$ in \eqref{901}, assume that $W\colon \Omega \times \mathbb{R}^{N\times N}\to \mathbb{R}$ is a continuous function, convex in the second variable, that satisfies $(W1)_1$ and $(W2)$ and that $\psi\colon\Omega \times \mathbb{R}^{N}\times \S{N-1}\to [0,+\infty)$ is continuous, satisfies $(\psi 1)$--$(\psi 4)$, and is such that $\psi(-\lambda,-\nu)=\psi(\lambda,\nu)$.
It follows that the cell formula \eqref{906} for $p=1$ becomes
\begin{equation}\label{decomposition of H1}
H_{1}(x_0,A,B)=W(x_0,B)+\inf \bigg\{ \int_{Q\cap S_{u}}\psi (x_0,[u](x),\nu_{u}(x))\,\de\mathcal{H}^{N-1}(x):u\in \mathcal{C}_{1}^{\bulk}(A,B)\bigg\} 
\end{equation}%
for every $x_0\in \Omega $ and $A,B\in \mathbb{R}^{N\times N}$.
Moreover, the infimum on the right-hand side is given by the expressions
\begin{equation}\label{explicit form}
\begin{split}
H_{1}(x_0,A,B)-W(x_0,B)=\sup \big\{ & \Theta (x_0,A-B) : \Theta (x_0,\cdot )\colon\mathbb{R}^{N\times N}\to[0,+\infty)\;\text{is subadditive and} \\ 
& \Theta (x_0,\lambda\otimes \nu)\leq \psi (x_0,\lambda,\nu)\,\text{for all $\lambda\in \mathbb{R}^{N}$ and $\nu\in\S{N-1}$}
\big\}.
\end{split} 
\end{equation}
\begin{proof}
It is convenient to omit the explicit appearance of the point $x_0\in \Omega$ that appears on both sides of the desired decomposition and that remains fixed throughout the proof.
Let $A,B\in \mathbb{R}^{N\times N}$ and $u\in \mathcal{C}_{1}^{\bulk}(A,B)$ be given.
The cell formula \eqref{906} along with the convexity and continuity of $W$ yield the inequalities
\begin{equation*}
\begin{split}
\int_{Q}  W(\nabla u(x))\,\de x & +\int_{Q\cap S_{u}} \!\!\!\! \psi ([u](x),\nu _{u}(x))\,\de\mathcal{H}^{N-1}(x) \geq  W\bigg(\int_{Q}\nabla u(x)\,\de x\bigg)+\int_{Q\cap S_{u}} \!\!\!\! \psi ([u](x),\nu_{u}(x))\,\de\mathcal{H}^{N-1}(x) \\
%=&W(B)+\int_{Q\cap S_{u}}\psi ([u](x),\nu _{u}(x))\,\de\mathcal{H}^{N-1}(x)
&\geq W(B)+\inf \bigg\{ \int_{Q\cap S_{u}}\psi ([u](x),\nu _{u}(x))\,\de\mathcal{H}^{N-1}(x):u\in \mathcal{C}_{1}^{\bulk}(A,B)\bigg\} ,
\end{split}
\end{equation*}%
and, therefore, also yield the lower bound%
\begin{equation}
H_{1}(A,B)\geq W(B)+\inf \bigg\{ \int_{Q\cap S_{u}}\psi ([u](x),\nu_{u}(x))\,\de\mathcal{H}^{N-1}(x):u\in \mathcal{C}_{1}^{\bulk}(A,B)\bigg\} .
\label{lower bound for H1}
\end{equation}%
To obtain an upper bound for $H_{1}(A,B)$ we note from \eqref{T001} that 
%\begin{equation*}
$
\mathcal{C}_{1}^{\bulk}(A,B)\supset \big\{ u\in SBV(Q,\mathbb{R}^{N}) :u|_{\partial Q}=Ax, \,\nabla u=B\;\text{$\cL^N$-a.e.~on $Q$}\big\}\eqqcolon\mathcal{C}(A,B)
$,
%\end{equation*}%
so that%
\begin{equation}\label{upper_bound_H1}
\begin{split}
&H_{1}(A,B) =\inf \bigg\{ \int_{Q}W(\nabla u(x))\,\de x+\int_{Q\cap S_{u}}\psi ([u](x),\nu _{u}(x))\,\de\mathcal{H}^{N-1}(x):u\in \mathcal{C}_{1}^{\bulk}(A,B)\bigg\} \\
&\leq \inf \bigg\{ \int_{Q}W(\nabla u(x))\,\de x+\int_{Q\cap S_{u}}\psi([u](x),\nu _{u}(x))\,\de\mathcal{H}^{N-1}(x):u\in \mathcal{C}(A,B)\bigg\} \\
&=W(B)+\inf \bigg\{ \int_{Q\cap S_{u}}\psi ([u](x),\nu_{u}(x))\,\de\mathcal{H}^{N-1}(x):u\in \mathcal{C}(A,B)\bigg\}   \\
&=W(B)+\inf \bigg\{ \int_{Q\cap S_{u}} \!\!\!\!\!\! \psi ([u](x),\nu_{u}(x))\,\de\mathcal{H}^{N-1}(x): u\in SBV(Q,\mathbb{R}^{N}), u|_{\partial Q}=(A-B)x, \nabla u=0\,\text{a.e.}\bigg\} \\
&=W(B)+\inf \left\{ \int_{Q\cap S_{u}} \!\!\!\!\!\!\psi ([u](x),\nu_{u}(x))\,\de\mathcal{H}^{N-1}(x): u\in SBV(Q,\mathbb{R}^{N}),u|_{\partial Q}=(A-B)x, \int_{Q}\nabla u=0\right\},  
\end{split}
\end{equation}
where the last equality is established in \cite[Theorem~2.3(iii) and (iv)]{S17}. 
It is now easy to see that the upper bound \eqref{upper_bound_H1} and lower bound \eqref{lower bound for H1} just obtained for $H_{1}(A,B)$ are the same. 
The relation \eqref{explicit form} follows from \eqref{decomposition of H1} and from \cite[Theorem~2.3(i)]{S17}. 
\end{proof}
\end{remark}
\section{Example from crystal plasticity}\label{plasticity}
We turn to the subject of the mechanics of single crystals to identify an example of bulk energies of the type recovered in the volume integral in \eqref{1146} through our combined upscaling and spatial localization of non-local energies.
The example emerges within the special class of \emph{invertible structured deformations} $(g,G)$ in which the tensors $G$ and $K_{(g,G)}$ in \eqref{factorization} below play the role of $F^e$ and $(F^p)^{-1}$ in the standard treatments of crystal plasticity.
\subsection{Invertible structured deformations}
The main mechanisms of deformation in single crystals are the distortion without disarrangements of the crystalline lattice and the shearing due to disarrangements. 
The articles \cite{DPO1993,DO2002a} show that the class of invertible structured deformations is appropriate for capturing such multiscale geometrical changes.
In the present setting, we can identify $(g,G)$ as an \emph{invertible structured deformation} when (see \cite{DPO1993} for a broader setting for this notion)
\begin{itemize}
\item[($I1$)] $g$ is a diffeomorphism of class $C^{1}$ for which $\nabla g$ and $(\nabla g)^{-1}$ are Lipschitzian, 
\item[($I2$)] $G$ is continuous on $\overline\Omega$ with invertible values, 
\item[($I3$)] the macroscopic volume change multiplier $\det \nabla g$ and the multiplier for volume change without disarrangements $\det G$ are equal: $\det \nabla g=\det G$.
\end{itemize} 
For an open set $\Omega\subset\R{3}$, we define 
$$ISD(\Omega;\R{3}\times\R{3\times 3})\coloneqq \big\{(g,G)\in SD(\Omega;\R{3}\times\R{3\times 3}): \text{($I1$), ($I2$), and ($I3$) hold}\big\}.$$
Invertible structured deformations turn out to be a useful setting for understanding some kinematical ingredients in continuum models of single crystals undergoing plastic deformations, partly because the relation $\det \nabla g$ $=\det G$ reflects the fact that the disarrangements occurring in single crystals typically do not involve changes in volume, \emph{i.e.}, arise without the formation of submacroscopic voids. 
One useful mathematical property of invertible structured deformations rests on the notion of composition of invertible structured deformations: if $(g,G)\in ISD(\Omega;\R{3}\times\R{3\times 3})$ and $(h,H)\in ISD(g(\Omega ))$, then the composition $(h,H)\diamond (g,G)$ is defined by%
\begin{equation}\label{definition of composition}
(h,H)\diamond (g,G)\coloneqq(h\circ g,(H\circ g)G).
\end{equation}
It is easy to show that $(h,H)\diamond (g,G)\in ISD(\Omega;\R{3}\times\R{3\times 3})$ and each $(g,G)\in ISD(\Omega;\R{3}\times\R{3\times 3})$ has the factorization
\begin{equation}\label{factorization}
(g,G)=(g,\nabla g)\diamond (i,K_{(g,G)})  
\end{equation}%
where $i\coloneqq x\mapsto x$ is the identity mapping on $\Omega $ and $K_{(g,G)} \coloneqq(\nabla g)^{-1}G$. 
The factor $(g,\nabla g)\in ISD(\Omega;\R{3}\times\R{3\times 3})$ carries all of the macroscopic deformation and is a classical deformation, \emph{i.e.}, it causes no disarrangements because $M_{(g,\nabla g)}\coloneqq \nabla g-\nabla g=0$. 
The factor $(i,K_{(g,G)})\in ISD(\Omega;\R{3}\times\R{3\times 3})$ is purely submacroscopic, \emph{i.e.}, it causes no macroscopic deformation, and carries the
disarrangements
\begin{equation}\label{disarrangement relation}
M_{(i,K_{(g,G)})} \coloneqq \nabla i-K_{(g,G)}=I-(\nabla g)^{-1}G =(\nabla g)^{-1}(\nabla g-G)=(\nabla g)^{-1}M_{(g,G)}.
\end{equation}
Moreover, both factors in (\ref{factorization}) are invertible structured deformations, because $\det K_{(g,G)}=\det G/\det \nabla g=1=\det \nabla i$ and, trivially, $\det \nabla g=\det \nabla g$.
\subsection{Slip systems for single crystals; crystallographic structured deformations}
For a single crystal in the reference configuration $\Omega$ the crystallographic data required for the analysis of crystallographic slip consists of pairs of orthogonal unit vectors $(s^{a},m^{a})$ for $a=1,\ldots,A$, with $A$ the number of potentially active slip systems. 
For crystallographic slip, the discontinuity in deformation arises only across a limited family of slip planes identified via the slip systems. 
The unit vector $s^{a}$ provides the direction of slip, while the unit vector $m^{a}$ is a normal to the slip plane for the $a^{th}$ slip-system $(s^{a},m^{a})$.
For the case of face-centered cubic crystals, the vectors $m^{a}$ are chosen from the normals to the faces of a preassigned regular octahedron and the slip vectors $s^{a}$ are chosen to be one of the directed edges of the face associated with $m^{a}$.
We wish next to identify a collection of invertible structured deformations for which the disarrangements arise only through the action of the slip systems of a give crystal. 
To this end, we recall \cite{DPO1995} that for each structured deformation $(g,G)$ in the sense of \cite{DPO1993} and, hence, for each invertible structured deformation there exists a sequence of injective, piecewise smooth deformations $u_{n}$ such that %the sequence converges in 
$$u_n\to g\quad\text{in $L^{\infty }(\Omega;\mathbb{R}^{3})$},\qquad \nabla u_{n}\to G\quad\text{in $L^{\infty }(\Omega;\mathbb{R}^{3\times 3})$,}$$
and, for every such sequence and for every $x\in \Omega $, the disarrangement tensor $M_{(g,G)}$ is given by the identification relation %($V_3(r)$ is the volume of the three-dimensional ball of radius $r$)
\begin{equation}\label{identification relation for M}
M_{(g,G)}(x) \coloneqq \nabla g(x)-G(x)= \lim_{r\to 0}\lim_{n\to \infty} \frac1{V_3(r)} \int_{B_r(x)\cap S_{u_{n}}} [u_{n}](y)\otimes \nu_{u_{n}}(y)\,\de \cH^{N-1}(y),  
\end{equation}
and the deformation without disarrangements $G$ is given by the identification relation 
\begin{equation}\label{identification relation for G}
G(x)=\lim_{r\to 0}\lim_{n\to \infty} \frac1{V_3(r)} \int_{B_{r}(x)} \nabla u_{n}(y)\,\de y.
\end{equation}
In both \eqref{identification relation for M} and \eqref{identification relation for G}, $V_3(r)$ denotes the volume of the three-dimensional ball of radius $r$.
Suppose now that the approximating deformations $u_{n}$ are such that the dyadic fields $[u_{n}]\otimes \nu _{u_{n}}$ are compatible with the $A$ slip-systems of the crystal in the sense that for every $n\in \mathbb{N}$ and for every $r>0$ there exist continuous fields $\gamma _{n}^{a}(\cdot,r)\colon\Omega \to \mathbb{R}$ for $a=1,\ldots, A$ such that
\begin{equation}\label{sufficient conditions}
\int_{B_r(x)\cap S_{u_{n}}} [u_{n}](y)\otimes \nu_{u_{n}}(y)\,\de\cH^{N-1}(y)=\sum_{a=1}^{A} V_3(r)\gamma_{n}^{a}(x,r)\nabla g(x)s^{a}\otimes m^{a},  
\end{equation}
and such that $\lim_{r\to 0}\lim_{n\to \infty }\gamma_{n}^{a}(x,r)\eqqcolon\gamma ^{a}(x)$ exists for every $x\in \Omega $ and for $a=1,\ldots, A$.
Under these assumptions, the identification relation \eqref{identification relation for M} becomes 
$$ M_{(g,G)} (x) =\sum_{a=1}^{A}\gamma ^{a}(x)\nabla g(x)s^{a}\otimes m^{a}, $$
so that the first relation in \eqref{disarrangement relation} becomes 
\begin{equation}\label{crystallographic disarrangements}
M_{(i,K_{(g,G)})}\left( x\right) =I-K_{(g,G)}(x)=\sum_{a=1}^{A}\gamma^{a}(x)s^{a}\otimes m^{a}  
\end{equation}%
and, therefore, 
\begin{equation}\label{K for crystallographic deformations}
K_{(g,G)}(x)=I-\sum_{a=1}^{A}\gamma ^{a}(x)s^{a}\otimes m^{a}.
\end{equation}%
We have provided through \eqref{sufficient conditions} sufficient conditions that the disarrangement tensor field $M_{(i,K_{(g,G)})}$ for the purely submacroscopic part $(i,(\nabla g)^{-1}G)$ of $(g,G)$ is a linear combination of the (spatially constant) crystallographic slip dyads $s^{a}\otimes m^{a}$ for $a=1,\ldots, A$ associated with the given crystal.
In this context we may say that the invertible structured deformation $(g,G)$ generates disarrangements only in the form of crystallographic slips or, more briefly, that $(g,G)$ is \textit{crystallographic}. 
We note in passing that the article \cite{DO2002a} provided a precise sense in which one may consider \emph{approximations} by crystallographic slips of the disarrangement matrix $M_{(g,G)}=\nabla g-G$ of any invertible structured deformations.
Since we here restrict our attention to those invertible structured deformations for which \eqref{crystallographic disarrangements} holds, the approximations in \cite{DO2002a} become exact in the present context.
For a crystallographic structured deformation $(g,G)$ and a point $x\in\Omega $ we say that a slip-system $a$ is active at $x$ if $\gamma^{a}(x)\neq 0$, and we say that single slip occurs at $x$ if there is only one slip-system that is active at $x$. 
If more than one slip system is active at $x$ we say that multiple slip occurs at $x$. 
If $(g,G)$ is crystallographic, so that \eqref{sufficient conditions}, \eqref{crystallographic disarrangements}, and \eqref{K for crystallographic deformations} hold, we may use the relations $\tr(s^{a}\otimes m^{a})=s^{a}\cdot m^{a}=0$ for all $a$ and $\det K_{(g,G)}=\det G/\det \nabla g$ to conclude from \eqref{K for crystallographic deformations} and the definition of invertible structured deformations that
\begin{equation}\label{restrictions on K}
\tr K_{(g,G)}=3\qquad\text{and}\qquad \det K_{(g,G)}=1.
\end{equation}%
Consequently, the crystallographic structured deformations are among those for which $K_{(g,G)}=(\nabla g)^{-1}G$ satisfies \eqref{restrictions on K}.
We note that a slip system $a$ is active at $x$ for $(g,G)$ if and only if $a$ is active at $x$ for the purely submacroscopic part $(i,K_{(g,G)})$ of $(g,G)$.

Examples of crystallographic structured deformations that undergo single slip at every point are the two-level shears $(g_{\mu ,x_{o}}^{a},G_{\gamma}^{a})$ for $a=1,\ldots, A,$ for $\mu ,\gamma \in \mathbb{R}$ and for $x_{o}\in\Omega $:
\begin{equation}\label{two-level shears}
\begin{split}
g_{\mu ,x_{o}}^{a}(x) \coloneqq &\; x_{o}+(I+\mu s^{a}\otimes m^{a})(x-x_{o})  \\
G_{\gamma}^{a}(x) \coloneqq &\; I+\gamma s^{a}\otimes m^{a},  
\end{split}
\end{equation}
for which it can be verified \cite{DPO1993} via the ``deck of cards'' family of approximations $u_{n}$ that \eqref{sufficient conditions} is satisfied, and for which 
\begin{subequations}
\begin{eqnarray}
\nabla g_{\mu}^{a}(x) &\!\!\!\!= &\!\!\!\! I+\mu s^{a}\otimes m^{a},  \label{g a mu} \\
M_{(g_{\mu ,x_{o}}^{a},G_{\gamma}^{a})}(x) &\!\!\!\!=&\!\!\!\! (\mu -\gamma)s^{a}\otimes m^{a}, \label{M a mu nu} 
\end{eqnarray}%
\end{subequations}
and
\begin{equation}\label{K a mu nu}
K_{(g_{\mu ,x_{o}}^{a},G_{\gamma}^{a})}(x)=I+(\gamma -\mu)s^{a}\otimes m^{a}=I-M_{(g_{\mu ,x_{o}}^{a},G_{\gamma}^{a})}(x).  
\end{equation}%
The ``deck of cards'' approximations $f_{n}$ show that each two-level shear $(g_{\mu ,x_{o}}^{a},G_{\nu }^{a})$ is approximated for each $n$ by smooth shears of amount $\gamma$ of the crystal lattice between $n-1$ slip planes, along with slip-discontinuities in the direction $s^{a}$ across the $n-1$ planes, each slip-discontinuity of amount $\frac{\mu -\gamma}{n}$ times a reference dimension in the direction $m^{a}$. 
By virtue of the ``deck of cards'' approximations $u_{n}$ and, in view of \eqref{g a mu}, \eqref{two-level shears}$_{2}$, \eqref{M a mu nu}, and the trivial relation
\begin{equation*}%\label{decomposition of macroshear}
\mu =\gamma+(\mu -\gamma),
\end{equation*}
we may then call $\mu $ the macroscopic shear, $\gamma$ the shear without slip, and $\mu -\gamma$ the shear due to slip for the two-level shear $(g_{\mu,x_{o}}^{a},G_{\gamma}^{a})$. 
Of particular interest is the case $\gamma=0$, \emph{i.e.}, the two-level shear $(g_{\mu ,x_{o}}^{a},I)$, in which the region between slip planes undergoes no shear and the macroshear $\mu $ arises entirely from slips on slip-system $a$. 
\subsection{Slip-neutral two-level shears}
We now summarize arguments provided in \cite{CDPFO1999} in a more limited setting that are based on the observation that crystallographic slip is physically activated within very thin bands, the so-called slip-bands, whose thickness is typically of the order $10^{2}$ atomic units, while the separation of active slip-bands is typically of order $10^{4}$ atomic units. 
The arguments in \cite{CDPFO1999} indicate the following: for each $a=1,\ldots, A$, there is a number $p^{a}>0$ such that a two-level shear $(g_{\mu,x_{o}}^{a},G_{\gamma}^{a})$ for which the shear due to slip $\mu -\gamma $ is an integral multiple of $p^{a}$ gives rise to submacroscopic slips equal to an integral number of atomic units in the direction of slip $s^{a}$.
The dimensionless number $p^{a}$ equals a shift of one atomic unit in the direction of slip $s^{a}$ divided by the separation $10^4$ in the direction $m^{a}$ of consecutive active slip-bands associated with system $a$ (measured in the same atomic units). 
Consequently, $p^{a}$ is of the order of $10^{-4}$, and a two-level shear $(g_{\mu ,x_{o}}^{a},G_{\gamma}^{a})$ with
\begin{equation}\label{neutral slips}
\mu -\gamma =np^{a},\qquad\text{with $n\in \mathbb{Z}$,}
\end{equation}%
produces a shift of $n$ atomic units and so does not produce a misfit of the crystalline lattice across the active slip bands, no matter what the amount of shear without slip $\gamma$.  
Thus, when \eqref{neutral slips} holds, the disarrangements due to slip are not revealed by the deformed positions under the two-level shear attained by the lattice points away from the slip bands.
We refer to a two-level shear $(g_{\mu ,x_{o}}^{a},G_{\gamma}^{a})$ satisfying \eqref{neutral slips} as slip-neutral for the slip-system $a$. 
In particular, when $\gamma=0$ we have $G_{\nu }^{a}=I$, and the two-level shear $(g_{\mu }^{a},I)$ is slip-neutral if the macroshear $\mu =\mu -\gamma$ is an integral multiple of $p^{a}$. 
Although a slip-neutral shear of the form $(g_{np^{a}}^{a},I)$ causes a macroscopic shearing of the body, not only does it cause no misfit of the lattice across slip bands, it also causes no distortion of the lattice. 
Consequently, we call the two-level shear $(g_{np^{a}}^{a},I)$ completely neutral for the slip-system $a$.
We suppose now that the given body undergoes a completely neutral two-level shear $(g_{\mu ,x_{o}}^{a},I)$ with $\mu =np^{a}$, starting from the region $\Omega $, and suppose further that $(g_{\mu ,x_{o}}^{a},I)$ is then followed by a crystallographic deformation $(g,G)$, so that we have the composition and factorization as in \eqref{definition of composition} and \eqref{factorization}: 
$$
(g,G)\diamond (g_{\mu ,x_{o}}^{a},I) =(g\circ g_{\mu ,x_{o}}^{a},G\circ
g_{\mu ,x_{o}}^{a}) 
=(g\circ g_{\mu ,x_{o}}^{a},\nabla (g\circ g_{\mu ,x_{o}}^{a}))\diamond
\big(i,K_{(g\circ g_{\mu ,x_{o}}^{a},G\circ g_{\mu ,x_{o}}^{a})}\big),
$$
with $K_{(g\circ g_{\mu ,x_{o}}^{a},G\circ g_{\mu ,x_{o}}^{a})}$ given by%
\begin{equation*}
\begin{split}
K_{(g\circ g_{\mu ,x_{o}}^{a},G\circ g_{\mu ,x_{o}}^{a})} =&\; (\nabla (g\circ g_{\mu ,x_{o}}^{a}))^{-1}(G\circ g_{\mu ,x_{o}}^{a}) 
=\big((\nabla g\circ g_{\mu ,x_{o}}^{a})\nabla g_{\mu ,x_{o}}^{a}\big)^{-1}(G\circ g_{\mu ,x_{o}}^{a}) \\
=&\;(\nabla g_{\mu ,x_{o}}^{a})^{-1}(\nabla g\circ g_{\mu,x_{o}}^{a})^{-1}(G\circ g_{\mu ,x_{o}}^{a}) 
=\big((\nabla g_{\mu ,x_{o}}^{a})^{-1}(\nabla g)^{-1}G\big)\circ g_{\mu ,x_{o}}^{a}
\\
=&\; \big(\nabla g_{\mu ,x_{o}}^{a})^{-1}K_{(g,G)}\big)\circ g_{\mu ,x_{o}}^{a}
\end{split}
\end{equation*}%
and with $K_{(g_{\mu ,x_{o}}^{a},I)}$ given by (\ref{K a mu nu}):%
$$
K_{(g_{\mu ,x_{o}}^{a},I)}=I-\mu s^{a}\otimes m^{a}=(\nabla g_{\mu
,x_{o}}^{a})^{-1}.
$$
Therefore, we have the relation%
\begin{equation*}
K_{(g,G)\diamond (g_{mp^{a}}^{a},I)}=K_{(g_{\mu,x_{o}}^{a},I)}(K_{(g,G)}\circ g_{\mu ,x_{o}}^{a}),
%\label{K under compositions}
\end{equation*}%
and the relations $M_{(i,K_{(g,G)})}=I-K_{(g,G)}$, \eqref{K for crystallographic deformations}, \eqref{M a mu nu}, and \eqref{K a mu nu} then yield 
\begin{equation}\label{long computation}
\begin{split}
I-M_{(i,K_{(g,G)\diamond (g_{mp^{a}}^{a},I)})} =&\; K_{(g,G)\diamond (g_{mp^{a}}^{a},I)}  = K_{(g_{\mu ,x_{o}}^{a},I)}(K_{(g,G)}\circ g_{\mu ,x_{o}}^{a}) \\
=&\; (I-M_{_{(g_{\mu ,x_{o}}^{a},I)}})(I-M_{(i,K_{(g,G)})}\circ g_{\mu,x_{o}}^{a})  \\
=&\; I-M_{(i,{K}_{(g,G)})}\circ g_{\mu ,x_{o}}^{a}-M_{(g_{\mu,x_{o}}^{a},I)}+M_{_{(g_{\mu ,x_{o}}^{a},I)}}M_{(i,K_{(g,G)})}\circ g_{\mu
,x_{o}}^{a}   \\
=&\; I-M_{(i,{K}_{(g,G)})}\circ g_{\mu ,x_{o}}^{a}-M_{(g_{\mu,x_{o}}^{a},I)} +\mu (s^{a}\otimes m^{a})\,M_{(i,K_{(g,G)})}\circ g_{\mu ,x_{o}}^{a}  \\
=&\; I-M_{(i,{K}_{(g,G)})}\circ g_{\mu ,x_{o}}^{a}-M_{(g_{\mu,x_{o}}^{a},I)} +\mu s^{a}\otimes (M_{(i,K_{(g,G)})}^{T}\circ g_{\mu ,x_{o}}^{a})\,m^{a}.
\end{split}
\end{equation}%
When $\mu \neq 0$ the last term in \eqref{long computation} vanishes at a point $x$ if and only if $M_{(i,K_{(g,G)})}^{T}(g_{\mu,x_{o}}^{a}(x))\,m^{a}=0$, and, because $g_{\mu ,x_{o}}^{a}(x_{o})=x_{o}$, we conclude from \eqref{crystallographic disarrangements} the following remark.
\begin{remark}\label{previous}
The disarrangement tensor $M_{(i,K_{(g,G)\diamond (g_{\mu,x_{o}}^{a},I)})}(x_{o})$ at $x_{o}\in \Omega $ for the submacroscopic part of the composition $(g,G)\diamond (g_{\mu ,x_{o}}^{a},I)$ of a crystallographic deformation $(g,G)$ with the completely neutral two-level shear $(g_{\mu ,x_{o}}^{a},I)$, where $\mu =np^{a}$, is given by
\begin{equation}\label{M is a morphism}
M_{(i,K_{(g,G)\diamond (g_{\mu,x_{o}}^{a},I)})}(x_{o})=M_{(i,K_{(g,G)})}(x_{o})+M_{(g_{\mu,x_{o}}^{a},I)}(x_{o})  
\end{equation}
if and only if
\begin{equation}\label{slip condition}
\sum_{b=1}^{A}\gamma ^{b}(x)(s^{b}\cdot m^{a})m^{b}=M_{(i,K_{(g,G)})}^{T}(x_{o})\,m^{a}=0.  
\end{equation}%
The identification relation \eqref{identification relation for M} for $M$ shows that the vanishing of $M_{(i,K_{(g,G)})}^{T}(x_{o})\,m^{a}$ in \eqref{slip condition} is the statement that, on average, as $n\to \infty$ and $r\to 0$, the jumps in approximating deformations $u_{n}$ must be parallel to the slip plane for the $a^\mathrm{th}$ slip system. 
A sufficient condition on the crystallographic deformation $(g,G)$ in order that the sum in \eqref{slip condition} vanish is the following: every slip system $b$ that is active at $x_{o}$ for $(g,G)$ satisfies $s^{b}\cdot m^{a}=0$, \emph{i.e.}, the slip plane for the completely neutral two-level shear $(g_{np^{a}}^{a},I)$ contains every slip direction $s^{b}$ of every slip system $b$, active at $x_{o}$ for $(g,G)$. 
In particular, if $(g,G)$ is a double slip at $x_{o}$ with active slip systems $(s^{1},m^{1})$ and $(s^{2},m^{2})$, then \eqref{M is a morphism} holds for every $a$ such that $s^{1}\cdot m^{a}=s^{2}\cdot m^{a}=0$. 
Such double slips $(g,G)$ include the case of ``cross slip'' in which $(s^{1},m^{1})=(s^{a},m^{1})$ and $(s^{2},m^{2})=(s^{a},m^{2})$ with $m^{a}=m^{1}\neq m^{2}$ in which slips in one and the same direction $s^{a}$ occur in two different slip systems at $x_{o}$. 
\qed
\end{remark}
Our discussion above of the relationship between the disarrangement tensor $M_{(i,K_{(g,G)\diamond (g_{\mu ,x_{o}}^{a},I)})}$ for the purely submacroscopic part of the composition $(g,G)\diamond (g_{\mu ,x_{o}}^{a},I)$ and the disarrangement tensor $M_{(i,K_{(g,G)})}$ for the purely submacroscopic part of $(g,G)$ is of particular interest for energetics, because $(g_{\mu ,x_{o}}^{a},I)$ was assumed to be completely neutral for the slip-sytem $a$, \emph{i.e.}, $\mu =np^{a}$ with $n$ an integer.
In that case, the lattice on which $(g,G)$ acts when following $(g_{\mu,x_{o}}^{a},I)$ differs from that on which $(g,G)$ acts when \textit{not}
following $(g_{\mu ,x_{o}}^{a},I)$ only by undetectable translations of the lattice between active slip-planes for system $a$. 
Consequently, the submacroscopic kinematical states of the crystal attained by means of the two purely submacroscopic structured deformations $(i,K_{(g,G)\diamond (g_{\mu ,x_{o}}^{a},I)})$ and $(i,K_{(g,G)})$ are indistinguishable. 
Therefore, the energetic responses to the corresponding disarrangement fields $M_{(i,K_{(g,G)\diamond(g_{\mu ,x_{o}}^{a},I)})}$ and $M_{(i,K_{(g,G)})}$ would be indistinguishable, so that the validity of \eqref{M is a morphism} would have significant implications with respect to properties of the energetic response of the crystal.
We now provide specific circumstances under which the relaxed energies recovered in Corollary~\ref{finalfinal} would be subject to those implications, and we set the stage by highlighting the role of $M_{(i,K_{(g,G)})}$, the disarrangement tensor for the purely submacroscopic deformation $(i,K_{(g,G)})$, in providing constitutive relations for the energetic response to crystallographic deformation that are frame-indifferent (independent of observer).
\subsection{Frame-indifferent energetic responses}
We noted in the text above the relation \eqref{disarrangement relation} that contains the formulas
$$
M_{(i,K_{(g,G)})}=I-K_{(g,G)}=(\nabla g)^{-1}M_{(g,G)},
$$
relating $M_{(g,G)}$, the disarrangement tensor for an invertible structured deformation $(g,G)$, and $M_{(i,K_{(g,G)})}$, the disarrangement tensor for the purely submacroscopic deformation $(i,K_{(g,G)})$ in \eqref{factorization}. 
Because $\nabla g$ and $M$ both are premultiplied by a rotation $Q$ under a change of observer associated with the rotation $Q$, the tensor field $K_{(g,G)}$ as well as the disarrangement tensor $M_{(i,K_{(g,G)})}$ are unchanged under such a change of observer. 
Therefore, for a function $\Psi_{i}:\mathbb{R}^{3\times 3}\rightarrow \mathbb{R}$ the mapping 
\begin{equation*}%\label{phi of M}
x \mapsto \Psi _{i}(M_{(i,K_{(g,G)})}(x)) =\Psi _{i}((\nabla g(x))^{-1}M_{(g,G)}(x))  
\end{equation*}%
has the property that its dependence on the structured deformation $(g,G)$ is independent of observer. 
The function $\Psi _{i}$ specifies the energetic response of a body from the reference cofiguration $\Omega $ to the disarrangements arising in purely submacroscopic deformations. 
If we define for the given macroscopic deformation $g$ the mapping $\Psi_{g}:\Omega \times \mathbb{R}^{3\times 3}\to \mathbb{R}$ by,%
\begin{equation}\label{psi of x and M}
\Psi _{g}(x,L):=\Psi _{i}((\nabla g(x))^{-1}L),\qquad\text{for all $L\in \mathbb{R}^{3\times 3}$,}  
\end{equation}%
then the mapping $x\mapsto \Psi _{g}(x,M_{(g,G)}(x))=\Psi _{i}((\nabla g(x))^{-1}M_{(g,G)}(x))$ also has the property that its dependence on $(g,G)$ is independent of observer. 
The following constitutive assumption for the dependence on invertible structured deformations $(g,G)$ of $\psi \colon\Omega\to \mathbb{R}$, the free energy density due to disarrangements, namely,
\begin{equation}\label{final constitutive assumption}
\psi (x) =\Psi _{i}(M_{(i,K_{(g,G)})}(x))=\Psi _{i}((\nabla g(x))^{-1}M_{(g,G)}(x)) =\Psi _{g}(x,M_{(g,G)}(x)),\qquad x\in \Omega,
\end{equation}%
then is independent of observer and carries the assumption that the free energy density due to disarrangements depends only on the disarrangements associated with the submacroscopic factor $(i,K_{(g,G)})$ in \eqref{factorization}. 
When $(g,G)$ is a crystallographic deformation, then the response functions $\Psi _{i}$ and $\Psi _{g}$ determine the free energy density due to crystallographic slip as a function of the disarrangement tensors $M_{(i,K_{(g,G)})}$ and $M_{(g,G)}$, respectively.
\subsection{Periodic properties of the energetic response $\Psi _{i}$ to crystallographic slip}
Let $x_{o}\in \Omega $, $a\in \{1,\ldots, A\}$, $\mu =np^{a}$ with $n\in \mathbb{Z}$, and a crystallographic structured deformation $(g,G)$ be given. 
We argued above that the lattice on which $(g,G)$ acts, when following the completely neutral two-level shear $(g_{\mu ,x_{o}}^{a},I)$, differs from that on which $(g,G)$ acts, when \textit{not} following $(g_{\mu,x_{o}}^{a},I)$, only by the undetectable translations of the lattice between active slip bands for system $a$. 
Consequently, the submacroscopic kinematical states of the crystal lattice attained by means of the two purely submacroscopic structured deformations $(i,K_{(g,G)\diamond (g_{\mu,x_{o}}^{a},I)})$ and $(i,K_{(g,G)})$ are indistinguishable. 
We invoke this indistinguishability to assert that the free energy density $\psi(x_{o})$ due to crystallographic slip should be the same for $(i,K_{(g,G)\diamond (g_{\mu ,x_{o}}^{a},I)})$ and for $(i,K_{(g,G)})$ at the fixed point $x_{o}$ of $g_{\mu ,x_{o}}^{a}$. 
Under the constitutive assumption \eqref{final constitutive assumption} applied to the point $x_{o}$ this assertion means that, for every $\mu =np^{a}$ with $n\in \mathbb{Z}$,
\begin{equation}\label{restriction on PHI}
\Psi _{i}(M_{(i,K_{(g,G)\diamond (g_{\mu ,x_{o}}^{a},I)})}(x_{o}))=\Psi_{i}(M_{(i,K_{(g,G)})}(x_{o})). 
\end{equation}
We wish to translate \eqref{restriction on PHI} into a property of the response function $\Psi _{i}\colon \mathbb{R}^{3\times 3}\to\mathbb{R}$ by invoking the additivity property \eqref{M is a morphism} in Remark~\ref{previous}. 
This property requires that we restrict attention to matrices $M\in \mathbb{R}^{3\times 3}$ of the form
\begin{equation}\label{M crystallographic}
M=\sum\limits_{b=1}^{A}\beta ^{b}s^{b}\otimes m^{b}
\end{equation}%
with $\beta ^{1},\ldots, \beta^{A}\in \mathbb{R}$ such that %, for all $b=1,\ldots, A$, there holds 
\begin{equation}\label{compatibility with a}
%\beta ^{b}\neq 0\Longrightarrow s^{b}\cdot m^{a}=0,
M^\top m^a=0,
\end{equation}%
and such that%
\begin{equation}\label{volume constraint}
\det (I-M)=1.
\end{equation}%
If we define%
\begin{equation*}%\label{definition of script M^a}
\M{a} \coloneqq \{M\in \mathbb{R}^{3\times 3} : \text{\eqref{M crystallographic}, \eqref{compatibility with a}, \eqref{volume constraint} hold}\}, 
\end{equation*}%
then it is easy to show that if there exists $b\in \{ 1,\ldots, A\} $, $s\in \mathbb{R}^{3}$, $\xi \in \mathbb{R}$, such that $s\cdot m^{a}=0$ and 
\begin{equation}
M=s\otimes m^{a}+\xi (m^{a}\times m^{b})\otimes m^{b}
\label{generic M in M^a}
\end{equation}%
then $M\in \M{a}$. 
When $m^{b}=\pm m^{a}$ then the matrix $M$ in \eqref{generic M in M^a} reduces to $s\otimes m^{a}$ and represents disarrangements arising from slips in the crystallographic plane with normal  $m^{a}$, but not necessarily in one of the slip directions in the list of slip systems for the crystal. 
When $m^{b}\neq \pm m^{a}$, $s^a=m^a\times m^b=s$, and $\xi \neq 0$, $M$ represents disarrangements of the previous type along with slips in the direction $m^{a}\times m^{b}$ in the crystallographic plane with normal $m^{b}$ and so corresponds to the cross-slip described in Remark~\ref{remark1}.
Suppose now that $(g,G)$, $a$, and $x_{o}$ are such that $M_{(i,K_{(g,G)})}(x_{o})\in \M{a}$. 
By Remark~\ref{previous}, \eqref{M is a morphism} holds for every completely neutral two-level shear $(g_{np^{a},x_{o}}^{a},I)$, \emph{i.e.}, 
$$
M_{(i,K_{(g,G)\diamond
(g_{np^{a},x_{o}}^{a},I)})}(x_{o})=M_{(i,K_{(g,G)})}(x_{o})+M_{(g_{np^{a},x_{o}}^{a},I)}(x_{o})
$$
which by \eqref{M a mu nu} we may write in the following form
$$
M_{(i,K_{(g,G)\diamond (g_{\mu,x_{o}}^{a},I)})}(x_{o})=M_{(i,K_{(g,G)})}(x_{o})+np^{a}s^{a}\otimes m^{a}.
$$
Consequently, when $M_{(i,K_{(g,G)})}(x_{o})\in \M{a}$, this formula and the constitutive restriction \eqref{restriction on PHI} on the response function $\Psi _{i}$ yield the relation%
\begin{equation}\label{invariance of PHI}
\Psi _{i}\big( M_{(i,K_{(g,G)})}(x_{o})+np^{a}s^{a}\otimes m^{a}\big) =\Psi _{i}\big( M_{(i,K_{(g,G)})}(x_{o})\big),\qquad\text{for every $n\in  \mathbb{Z}$.}  
\end{equation}%
For matrices $M\in $ $\M{a}$ satisfying \eqref{generic M in M^a} this restriction takes the form 
\begin{equation}\label{periodicity condition}
\Psi _{i}((s+np^{a}s^{a})\otimes m^{a}+\xi (m^{a}\times m^{b})\otimes m^{b}) = \Psi _{i}(s\otimes m^{a}+\xi (m^{a}\times m^{b})\otimes m^{b})
\end{equation}%
for every $s\in \{m^{a}\}^{\bot }$, $b\in \{ 1,\ldots, A\} $, $\xi \in \mathbb{R}$, and $n\in \mathbb{Z}$. 
In other terms, \eqref{periodicity condition} is the assertion that for each $b\in \{1,\ldots, A\} $ and $\xi \in \mathbb{R}$ the mapping%
\begin{equation}\label{period pa}
s\mapsto \Psi _{i}(s\otimes m^{a}+\xi (m^{a}\times m^{b})\otimes m^{b})
\end{equation}%
is periodic on $\{m^{a}\}^{\bot }$ with (vector) period $p^{a}s^{a}$.
Thus, the presence of completely neutral two-level shears $(g_{p^{a}}^{a},I)$ has led via \eqref{invariance of PHI} to the identification of a family of affine subspaces%
\begin{equation*}%\label{affine subspaces}
\M{a}_{b,\xi } \coloneqq \{s\otimes m^{a}+\xi (m^{a}\times m^{b})\otimes m^{b} :\ s\in \{m^{a}\}^{\bot }\}  
\end{equation*}%
of $\mathbb{R}^{3\times 3}$, each two-dimensional and on each of which the restriction of $\Psi _{i}$ is periodic with corresponding period $p^{a}s^{a}$. 
\subsection{Form of the initial non-local energy appropriate for crystalline plasticity}
In this subsection we take the basic constitutive assumption \eqref{final constitutive assumption} and the property \eqref{invariance of PHI} of $\Psi _{i}$ that reflects the complete neutrality of certain two-level shears, and we identify additional properties of $\Psi _{i}$ that permit the application of Theorem~\ref{final1E} when $\Psi _{g}$ appears in place of $\Psi $ in the formula \eqref{1011} for the averaged interfacial energy. 
The following theorem provides conditions on $\Psi _{i}$ and $(g,G)$ sufficient for the application of Theorem~\ref{final1E} in the context of crystal plasticity. 
We note in advance that the fact that the macroscopic deformation $g$ for a crystallographic structured deformation $(g,G)$ is smooth (as is the case, more generally, for $(g,G)\in ISD(\Omega;\R{3}\times\R{3\times 3})$) means that the singular part $D^{s}g$ of the distributional derivative $Dg $ is zero and, consequently, that the term in \eqref{1146} involving the recession function $\Psi _{g}^{\infty }$ is zero. 
(This would not be the case were one to use the original definition of invertible structured deformation in \cite{DPO1993} in which $g$ is allowed to be discontinuous.)
\begin{theorem}
Let $\Omega \subset \mathbb{R}^{3}$ be a bounded domain with Lipschitz boundary, and let $\Psi _{i}\colon \mathbb{R}^{3\times 3}\to \mathbb{R}$ be a sublinear Lipschitz continuous mapping satisfying, for each $a,b\in \{1,\ldots, A\}$ and for each $\xi \in \mathbb{R}$, the periodicity condition \eqref{period pa}. 
Moreover, for each crystallographic structured deformation $(g,G)$, let $\Psi _{g}\colon\Omega\times \mathbb{R}^{3\times 3}\to\mathbb{R}$ be given in terms of $\Psi _{i}$ by \eqref{psi of x and M}, and for each $u\in SBV(\Omega;\mathbb{R}^{3})$ define as in \eqref{1009} the averaged interfacial energy
\begin{equation*}%\label{e alpha g}
E_{g}^{\alpha _{r}}(u)\coloneqq \int_{\Omega _{r}}\Psi _{g}(x,(D^{s}u\ast\alpha _{r})(x))\,\de x.  
\end{equation*}%
Then %\red{for each mollifier $\alpha \in \blu{C_c^\infty(B_{1})}$, for each $r>0$, and}
if $\alpha_r$ is as in \eqref{1014}, 
%\red{for the convergence in \eqref{appCF} and \eqref{centerline}}
the upscaled energy $I^{\alpha _{r}}(g,G;\Omega_r)$ in \eqref{864} is given by 
\begin{equation}\label{relaxed energy for fixed r}
I^{\alpha _{r}}(g,G;\Omega_r)=\int_{\Omega _{r}}\Psi _{g}\big(x,((\nabla g-G) * \alpha _{r})(x)\big)\,\de x,  
\end{equation}%
and the spatially localized, upscaled energy $I(g,G)$ in \eqref{1013} takes the form given in \eqref{624}
\begin{equation}\label{final relaxed energy}
\begin{split}
{I}(g,G) =\lim_{r\to 0^+} I^{\alpha _{r}}(g,G;\Omega_r) =&\; \int_{\Omega }\Psi _{g}(x,\nabla g(x)-G(x))\,\de x=\int_{\Omega}\Psi _{i}(I-\nabla g(x)^{-1}G(x))\,\de x \\
=&\; \int_{\Omega }\Psi _{i} \big(I-K_{(g,G)}(x)\big)\,\de x=\int_{\Omega}\Psi _{i} \big(M_{(i,K_{(g,G)})}(x)\big)\,\de x.  
\end{split}
\end{equation}
In particular, the bulk density for $I(g,G)$ is frame-indifferent and retains the periodicity property \eqref{period pa}.
\end{theorem}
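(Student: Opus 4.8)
The plan is to reduce the statement to Theorems~\ref{mainrho} and~\ref{final1E} by checking that the density $\Psi_g$ defined in \eqref{psi of x and M} inherits enough structure from $\Psi_i$, and then to exploit the smoothness of $g$ built into ($I1$) to discard every surface term.

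First I would record the relevant regularity: by ($I1$), $g\in C^1(\overline\Omega;\R{3})$ with $\nabla g$ and $(\nabla g)^{-1}$ Lipschitz on the compact set $\overline\Omega$; in particular $\|(\nabla g)^{-1}\|_{L^\infty(\overline\Omega)}<\infty$ and $\mathrm{Lip}\big((\nabla g)^{-1}\big)<\infty$, while $Dg=\nabla g\,\cL^N$, so $D^s g=0$ and $S_g=\emptyset$. Then I would verify that $\Psi_g\in(L)$: continuity of $\Psi_g$ on $\Omega\times\R{3\times 3}$ follows from ($I1$)--($I2$) and continuity of $\Psi_i$; the Lipschitz bound \eqref{S102} follows from $|\Psi_g(x,\xi)-\Psi_g(x,\xi')|\leq L_{\Psi_i}\,|(\nabla g(x))^{-1}(\xi-\xi')|\leq L_{\Psi_i}\,\|(\nabla g)^{-1}\|_{L^\infty(\overline\Omega)}\,|\xi-\xi'|$; and the modulus-of-continuity estimate \eqref{S103} follows from $|\Psi_g(x,\xi)-\Psi_g(x',\xi)|\leq L_{\Psi_i}\,\big|\big((\nabla g(x))^{-1}-(\nabla g(x'))^{-1}\big)\xi\big|\leq L_{\Psi_i}\,\mathrm{Lip}\big((\nabla g)^{-1}\big)\,|x-x'|\,|\xi|$, which has the required form $\omega(|x-x'|)(1+|\xi|)$ with $\omega$ linear. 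Finally, since $\Psi_g\in(L)$, Lemma~\ref{S105} gives $\Psi_g^\infty(x,\xi)=\limsup_{t\to+\infty}\Psi_g(x,t\xi)/t=\limsup_{t\to+\infty}\Psi_i\big(t(\nabla g(x))^{-1}\xi\big)/t=\Psi_i^\infty\big((\nabla g(x))^{-1}\xi\big)=0$, the last equality by sublinearity of $\Psi_i$ (and Lemma~\ref{S105} applied to $\Psi_i$ itself), so $\Psi_g$ has sublinear growth. Here $\Psi_i$ is real-valued rather than nonnegative, which is immaterial, since the proofs of Theorems~\ref{mainrho} and~\ref{final1E} use only continuity, local boundedness of $\Psi$, and the (L)-estimates.

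With $\Psi_g$ thus placed in the hypotheses of Section~\ref{section:mathform}, formula \eqref{relaxed energy for fixed r} is Theorem~\ref{mainrho} with $\Psi=\Psi_g$: the inner surface integral over $B_r(x)\cap S_g$ in \eqref{864} is empty because $S_g=\emptyset$, and the remaining volume integral is $((\nabla g-G)*\alpha_r)(x)$ by \eqref{convalpha}. Likewise, Theorem~\ref{final1E} with $\Psi=\Psi_g$ gives \eqref{1146}; the singular term vanishes because $D^s g=0$, and the recession density $\Psi_g^\infty$ vanishes by sublinearity, so $I(g,G)$ collapses to the form \eqref{624}, namely $\int_\Omega\Psi_g(x,\nabla g(x)-G(x))\,\de x$. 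The rest of \eqref{final relaxed energy} is a direct computation: $\Psi_g(x,\nabla g(x)-G(x))=\Psi_i\big((\nabla g(x))^{-1}(\nabla g(x)-G(x))\big)=\Psi_i\big(I-(\nabla g(x))^{-1}G(x)\big)=\Psi_i\big(I-K_{(g,G)}(x)\big)=\Psi_i\big(M_{(i,K_{(g,G)})}(x)\big)$, using $K_{(g,G)}=(\nabla g)^{-1}G$ and \eqref{disarrangement relation}. For the closing assertion, the bulk density of $I(g,G)$ is $x\mapsto\Psi_i\big(M_{(i,K_{(g,G)})}(x)\big)$; its frame-indifference is the observation recorded just before \eqref{psi of x and M} that $K_{(g,G)}$, hence $M_{(i,K_{(g,G)})}$, is unchanged under a change of observer, and it retains \eqref{period pa} because $\Psi_i$ does so by hypothesis.

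I expect the only genuine work to be the first step: confirming that the regularity packaged into ($I1$)--($I2$) is exactly what promotes $\Psi_g$ to the class (L) and transports sublinearity through the linear substitution $L\mapsto(\nabla g(x))^{-1}L$. Once $\Psi_g\in(L)$ with sublinear growth is established, the remainder is a direct application of Theorems~\ref{mainrho} and~\ref{final1E} together with the algebraic identities defining $K_{(g,G)}$.
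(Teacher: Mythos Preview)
Your proposal is correct and follows essentially the same approach as the paper's own proof: verify that $\Psi_g$ lies in class (L) via the Lipschitz continuity of $(\nabla g)^{-1}$ and of $\Psi_i$, then invoke Theorems~\ref{mainrho} and~\ref{final1E}, using the smoothness of $g$ to kill the surface terms. Your version is in fact more explicit than the paper's---you spell out the (L)-estimates, flag the passage from real-valued to nonnegative $\Psi$, and note why $\Psi_g^\infty=0$ via sublinearity---where the paper simply asserts that $\Psi_g\in(L)$ and cites the two theorems.
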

\begin{proof}
We note that for each $(g,G)\in ISD(\Omega;\R{3}\times\R{3\times 3})$ there also holds $(g,G)\in SD(\Omega;\R{3}\times\R{3\times 3})$. 
Moreover, the Lipschitz continuity of $(\nabla g)^{-1}$ and the assumed Lipschitz continuity of $\Psi _{i}$ imply that $\Psi _{g}$ 
%\red{satisfies}
belongs to the class (L), so that we may invoke 
%\red{not only} 
Theorem~\ref{mainrho} to obtain \eqref{relaxed energy for fixed r} 
%\red{but also} 
and Theorem~\ref{final1E} to obtain \eqref{final relaxed energy}. 
\end{proof}

We close by noting that, for the case $p=1$, the formulas \eqref{812}, \eqref{decomposition of H1}, \eqref{explicit form}, and \eqref{final relaxed energy} provide the desired, fully three-dimensional analogue of the decomposition \eqref{decomposition for energy}.

\bigskip
\noindent
\textbf{Acknowledgments}: 
Tha authors thank the Center for Nonlinear Analysis at Carnegie Mellon University, Instituto Superior T\'ecnico at Universidade de Lisboa, the Zentrum Mathematik of the Technische Universit\"at M\"unchen, the Dipartimento di Ingegneria Industriale of the Universit\`a di Salerno, and the Dipartimento di Scienze Matematiche ``G.~L.~Lagrange'' of Politecnico di Torino for their support and hospitality. 
Finally, the authors would like to thank the Isaac Newton Institute for Mathematical Sciences for support and hospitality during the programme \emph{The mathematical design of new materials} when work on this paper was undertaken. This work was partially supported by EPSRC Grant Number EP/R014604/1.
MM and EZ are members of the Gruppo Nazionale per l'Analisi Matematica, la Probabilit\`a e le loro Applicazioni (GNAMPA) of the Istituto Nazionale di Alta Matematica (INdAM).
JM acknowledges partial support from the Funda\c{c}\~{a}o para a Ci\^{e}ncia e a Tecnologia through the grant UID/MAT/04459/2013 and gratefully acknowledges support from GNAMPA-INdAM through \emph{Programma professori visitatori 2018}.
MM acknowledges partial support from the ERC Starting grant \emph{High-Dimensional Sparse Optimal Control} (Grant agreement no.~306274) and the DFG Project \emph{Identifikation von Energien durch Beobachtung der zeitlichen Entwicklung von Systemen} (FO 767/7).
EZ acknowledges partial support from the INdAM-GNAMPA Project 2019 \emph{Analysis and optimisation of thin structures}.
Funding from the \emph{Starting grant per giovani ricercatori} of Politecnico di Torino is also gratefully acknowledged.

\bibliographystyle{plain}

\begin{thebibliography}{100}
\bibitem{AL} G.~Alberti: \emph{A Lusin type Theorem for gradients}. J.~Funct.~Anal.~\textbf{100} (1991), 110-118.
\bibitem{AFP} L.~Ambrosio, N.~Fusco, and D.~Pallara: \emph {Functions of Bounded Variation and Free Discontinuity Problems}. Oxford University Press, 2000.
%\bibitem{AMT} L.~Ambrosio, S.~Mortola, and V.~M.~Tortorelli: \emph{Functionals with linear growth defined on vector valued $BV$ functions}. J. Math. Pures et Appl.~\textbf{70} (1991), 269-323
\bibitem{Apostol} T.~Apostol: \emph {Mathematical Analysis}. Addison-Wesley, 1974.
\bibitem{LandauTheory} R.~Baggio, E.~Arbib, P.~Biscari, S.~Conti, L.~Truskinovsky, G.~Zanzotto, and O.~U.~Salman: \emph{Landau-Type Theory of Planar Crystal Plasticity}. Physical Review Letters \textbf{123}, 205501 (2019).
\bibitem{BCMS2013} M.~Ba\'ia, M.~Chermisi, J.~Matias, and P.~M.~Santos: \emph{Lower semicontinuity and relaxation of signed functionals with linear growth in the context of $\mathcal{A}$-quasiconvexity}. Calc.~Var.~\textbf{47} (2013), 465-498.
\bibitem{BMS2012} M.~Ba\'ia, J.~Matias, and P.~M.~Santos: \emph{A relaxation result in the framework of structured deformations}. Proc.~Royal Soc.~Edinburgh: Section A.\textbf{142} (2012), 239-271. 
%\bibitem{BMS} M. Ba\'ia, J. Matias and P. Santos: \emph{Characterization of Generalized Young Measures in the $\mathcal{A}$-Quasiconvexity Context}. Indiana Univ.~Math.~J.~\textbf{62}(2) (2013), 487-521. 
\bibitem{BBBF1996} A.~C.~Barroso, G.~Bouchitt\'e, G.~Buttazzo, and I.~Fonseca: \emph{Relaxation of bulk and interfacial energies}. Arch.~Rational Mech.~Anal.~\textbf{135} (1996), 107-173.
\bibitem{nogap} A.~C.~Barroso, J.~Matias, M.~Morandotti, and D.~R.~Owen: \emph{Explicit Formulas for Relaxed Energy Densities Arising from Structured Deformations}. Math.~Mech.~Complex Syst.~\textbf{5}(2) (2017), 163-189.
\bibitem{BMMO2017}  A.~C.~Barroso, J.~Matias, M.~Morandotti, and D.~R.~Owen: \emph{Second-order structured deformations: relaxation, integral representation and applications}. Arch.~Rational Mech.~Anal.~\textbf{225}(3) (2017), 1025-1072.
%\bibitem{Braides} A.~Braides: \emph{$\Gamma$-convergence for beginners}. Oxford Lecture Series in Mathematics and its Applications, 22. Oxford University Press, Oxford, 2002. 
\bibitem{CMMO} G.~Carita, J.~Matias, M.~Morandotti, and D.~R.~Owen: \emph{Dimension reduction in the context of structured deformations}. J.~Elast.~\textbf{133} (2018), 1-35.
\bibitem{CDPFO1999} R.~Choksi, G.~Del Piero, I.~Fonseca, and D.~R.~Owen: \emph{Structured deformations as energy minimizers in models of fracture and hysteresis}. Math.~Mech.~Solids \textbf{4} (1999)(3), 321-356. 
\bibitem{CF1997} R.~Choksi and I.~Fonseca: \emph{Bulk and interfacial energy densities for structured deformations of continua}. Arch.~Rational Mech.~Anal.~\textbf{138} (1997), 37-103.
%\bibitem{DM} G.~Dal Maso: \emph{An introduction to $\Gamma$-convergence}. Progress in Nonlinear Differential Equations and their Applications, 8. Birkh\"auser Boston, Inc., Boston, MA, 1993.
%\bibitem{DGA} E.~De Giorgi and L.~Ambrosio: \emph{Un nuovo tipo di funzionale del calcolo delle variazioni}.  Atti Accad.~Naz.~Lincei \textbf{82} (1988), 199-210.
\bibitem{DP2001} G.~Del Piero: \emph{The Energy of a One-Dimenstional Structured Deformation}. Math.~Mech.~Solids \textbf{6}(4) (2001), 387-408.
\bibitem{DP2018} G.~Del Piero: \emph{The variational structure of classical plasticity}. Math.~Mech.~Complex Syst.~\textbf{6 }(2018), 137-180.
\bibitem{DPO1993} G.~Del Piero and D.~R.~Owen: \emph{Structured deformations of continua}. Arch.~Rational Mech.~Anal.~\textbf{124} (1993), 99-155.
\bibitem{DPO1995} G.~Del Piero and D.~R.~Owen: \emph{Integral-gradient formulae for structured deformations}. Arch.~Rational Mech.~Anal.~\textbf{131} (1995), 121-138.
\bibitem{DPO2000} G.~Del Piero and D.~R.~Owen: \emph{Structured Deformations: Part Two}. Quaderni dell'Istituto Nazionale di Alta Matematica, Gruppo Nazionale di Fisica Matematica, no.~\textbf{58} (2000), 1-62. % DON'T delete: Chapter 2: 12-24.
\bibitem{DO2000} L.~Deseri and D.~R.~Owen: \emph{Active slip-band separation and the energetics of slip in single crystals}. Int.~J.~Plast.~\textbf{16} (2000), 1411-1418.
\bibitem{DO2002} L.~Deseri and D.~R.~Owen: \emph{Energetics of two-level shears and hardening of single crystals}. Math.~Mech.~Solids \textbf{7} (2002), 113-147.
\bibitem{DO2002a} L.~Deseri and D.~R.~Owen: \emph{Invertible structured deformations and the geometry of multiple slip in single crystals}. Int.~J.~Plast.~\textbf{18} (2002), 833-849.
\bibitem{DO2003} L.~Deseri and D.~R.~Owen: \emph{Toward a field theory for elastic bodies undergoing disarrangements}. J.~Elast.~\textbf{70} (2003), 197-236.
\bibitem{DO2019} L.~Deseri and D.~R.~Owen: \emph{Elasticity with hierarchical disarrangements: a field theory that admits slips and separations at multiple submacroscopic levels}. J.~Elast.~\textbf{135} (2019), 149-182.
\bibitem{ESJB2013} J.~Edmiston, D.~J.~Steigmann, G.~J.~Johnson, and N.~Barton: \emph{A model for elastic-viscoplastic deformations of crystalline solids based on material symmetry: theory and plane-strain simulations}. Int.~J.~Engineering Sci.~\textbf{63} (2013), 10-22.
\bibitem{FHP} I.~Fonseca, A.~Hagerty, and R.~Paroni: \emph{Second-Order Structured Deformations in the Space of Functions of Bounded Hessian}. J.~Nonlinear Sci.~\textbf{29}(6) (2019), 2699-2734.
%\bibitem{FLbook} I.~Fonseca and G.~Leoni: \emph{Modern Methods in the Calculus of Variations: $L^p$ spaces}. Springer Monographs in Mathematics. Springer, New York, 2007.
\bibitem{FM1992} I.~Fonseca and S.~M\"uller: \emph{Quasiconvex integrands and lower semincontinuity in $L^1$}. SIAM J.~Math.~Anal.~\textbf{23} (1992), 1081-1098.
\bibitem{FM1993} I.~Fonseca and S.~M\"uller: \emph{Relaxation of quasiconvex functionals in $BV(\Omega;\mathbb{R}^p)$ for integrands $f(x,u,\nabla u)$}. Arch.~Rational Mech.~Anal.~\textbf{123} (1993), 1-49.
\bibitem{Friedlander} G.~Friedlander and M.~Joshi: \emph{Introduction to the theory of distributions}. Cambridge University Press, 1998.
\bibitem{Gagliardo} E.~Gagliardo: \emph{Caratterizzazioni delle tracce sulla frontiera relative ad alcune classi di funzioni in $n$ variabili}.(Italian) Rend.~Sem.~Mat.~Univ.~Padova \textbf{27} (1957), 284-305. 
\bibitem{Gerhardt} C.~Gerhardt: \emph{Trace and extensions of BV functions}. Lecture notes, Institut f\"ur Angewandte Mathematik.
\bibitem{Grafakos} L.~Grafakos: \emph{Classical Fourier Analysis}. Graduate Texts in Mathematics \textbf{249}, Springer, 2008.
%\bibitem{JS2014} M.~Jesenko and B.~Schmidt: \emph{Closure and commutability results for $\Gamma$-limits and the geometric linearization and homogenization of multiwell energy functionals}. SIAM J.~Math.~Anal.~\textbf{46}(4) (2014), 2525-2553.
\bibitem{KR} J.~Kristensen and F.~Rindler: \emph{Characterization of Generalized Gradient Young Measures Generated by Sequences in $W^{1,1}$ and $BV$}, Arch. Rational Mech. Anal.~\textbf{197}(2) (2010), 539-598. 

\bibitem{KR2}  J.~Kristensen and F.~Rindler: \emph{Relaxation of signed integral functionals in BV}, Calc.~Var.~\textbf{37} (2010), 29-62.

\bibitem{L2000} C.~J.~Larsen: \emph{On the representation of effective energy densities}. ESAIM Control, Optimisation, and Calculus of Variations \textbf{5} (2000), 529-538. 

\bibitem{L2016} R.~Lipton: \emph{Cohesive dynamics and brittle fracture}. J.~Elast.~\textbf{124} (2016), 143-191.

\bibitem{MMZ} J.~Matias, M.~Morandotti, and E.~Zappale: \emph{Optimal design of fractured media with prescribed macroscopic strain}. J.~Math.~Anal.~Appl.~\textbf{449}(2) (2017), 1094-1132. 

\bibitem{O2017} D.~R.~Owen: \emph{Elasticity with gradient disarrangements: a multiscale perspective for strain-gradient theories of elasticity and of plasticity}. J.~Elast.~\textbf{127} (2017), 115-150.

\bibitem{OP2000} D.~R.~Owen and R.~Paroni: \emph{Second-order structured deformations}, Arch.~Rational Mech.~Anal.~\textbf{155} (2000), 215-235.

\bibitem{OP2015} D.~R.~Owen and R.~Paroni: \emph{Optimal flux densities for linear mappings and the multiscale geometry of structured deformations}. Arch.~Rational Mech.~Anal.~\textbf{218} (2015), 1633-1652.

%\bibitem{R} Y.~Reshetnyak: \emph{Weak convergence of completely additive vector functions on a set}. Sib.~Math.~J.~\textbf{9} (1968), 1039-1045.

\bibitem{SL2008} S.~A.~Silling and R.~B.~Lehoucq: \emph{Convergence of peridynamics to classical elasticity theory}. J.~Elast.~\textbf{93} (2008), 13-37.

\bibitem{S2015}  M.~\v{S}ilhav\'y: \emph{On the approximation theorem for structured deformations from $BV(\Omega)$}. Math.~Mech.~Complex Syst.\textbf{3}(1) (2015), 83--100.

\bibitem{S17} M.~\v{S}ilhav\'y: \emph{The general form of the relaxation of a purely interfacial energy for structured deformations}. Math.~Mech.~Complex Syst.~\textbf{5}(2) (2017), 191-215.

%\bibitem{Z1989} W.~P.~ Ziemer: \emph{Weakly Differentiable Functions}. Graduate Texts in Mathematics, 120. Springer-Verlag, 1989.

\end{thebibliography}

\end{document}